\def\R{\mathbb{R}}
\def\P{\mathbb{P}}
\def\O{\Omega}
\newcommand{\n}{\mathbb{N}}
\newcommand{\ov}{\overline}
\newcommand\Tau{\mathcal{T}}
\newcommand{\zp}{\mathbb{Z}/p\mathbb{Z}\oplus\mathbb{Z}/p\mathbb{Z}}
\newcommand{\ds}{\displaystyle}
\def\Z{\mathbb{Z}}
\newtheorem{thm}{Theorem}[section]
\theoremstyle{definition}
\newtheorem{defn}[thm]{Definition} 
\newtheorem{ex}[thm]{Example}
\newtheorem{lem}[thm]{Lemma}
\newtheorem{cor}[thm]{Corollary}
\newtheorem{prop}[thm]{Proposition}
\newtheorem{rem}[thm]{Remark}
\theoremstyle{remark}
\numberwithin{equation}{section}
\theoremstyle{lemma}
\newtheorem{lema}{Lemma}[section]
\newtheorem{coro}[lema]{Corollary}
\newtheorem{teo}[lema]{Theorem}
\newtheorem{defi}[lema]{Definition}
\newtheorem{afin}[lema]{Claim}
\newtheorem{pro}[lema]{Proposition}
\newcommand{\Frac}{\displaystyle \frac}
\newcommand{\Sqrt}{\displaystyle \sqrt}
\begin{document}
\title[On NET Maps: Examples and Nonexistence Results]{On NET Maps: Examples and Nonexistence Results}
\author[E.A.~S\'aenz Maldonado]{Edgar A.~Saenz Maldonado}
\address{Department of Mathematics,
Virginia Tech,
Blacksburg, VA 24061, U.S.A.}
\email{easaenzm@math.vt.edu}
\begin{abstract} 
A Thurston map is called \begin{em}nearly Euclidean\end{em} if its local degree at each critical point is 2 and it has exactly four postcritical points. Nearly  Euclidean Thurston (NET) maps are simple generalizations of rational Latt\`{e}s maps. We investigate when such a map has the property that the associated pullback map on Teichm\"uller space is constant. We also show that no Thurston map of degree 2 has constant pullback map.\end{abstract}

\maketitle
\section{Introduction}
Let $S^2$ be a topological 2-sphere with a fixed orientation. We use $\P^{1}$ to denote the Riemann sphere. In this paper, all maps $S^2\to S^2$ will be orientation preserving. Let $f:S^2\to S^2$ be a branched cover and let $\O_{f}$ be the set of its critical points. We define the $postcritical$ $set$ $of$ $f$ to be
$$P_{f}:=\ds\bigcup_{n>0}f^{\circ n}(\O_{f}).$$

If $P_{f}$ is finite, we call $f$ a $Thurston$ $map$. Two Thurston maps $f$ and $g$ are called $equivalent$ iff there exist homeomorphisms $h_{0}:(S^2,P_{f})\to (S^2,P_{g})$ and $h_{1}:(S^2,P_{f})\to (S^2,P_{g})$ for which $h_{0}\circ f=g\circ h_{1}$ and $h_{0}$ is isotopic, rel $P_{f}$, to $h_{1}$.  In this case, if $g$ is a rational map we also say that $f$ is $realized$ by $g$.  

Suppose $f:S^2\to S^2$ is a Thurston map. The orbifold $\mathcal{O}_{f}=(S^{2},v_{f})$ associated to $f$ is the topological orbifold with underlying space $S^2$ and whose weight function $\nu_{f}(x)$ at $x$ is  given by $\nu_{f}(x)=\text{lcm}\{n\in\Z^{+}$: there exists a positive integer $m$ such that $f^{\circ m}$ has degree $n$ at some $y\in S^{2}$ with $f^{\circ m}(y)=x\}$. Let $\Tau_{f}$ be the Teichm\"uller space of $\mathcal{O}_{f}$. We may regard the space $\Tau_{f}$ as the space of complex structures on $\mathcal{O}_{f}$, up to the equivalence of isotopy fixing $P_{f}$. A complex structure on $\mathcal{O}_{f}$ pulls back under $f$ to a complex structure on $(S^2, f^{-1}(\nu_{f}))$, and this extends to a complex structure on $\mathcal{O}_{f}$. In this way we obtain a map $\Sigma_{f}: \Tau_{f}\to\Tau_{f}$. We will refer to $\Sigma_{f}$ as the $pullback$ $map$ induced by $f$.

In \cite{DH}, Douady and Hubbard, following Thurston, provide necessary and sufficient conditions for a Thurston map to be equivalent to a rational map.

\begin{thm}$\mathrm{(Thurston).}$ A Thurston map $f$ is equivalent to a rational map if and only if $\Sigma_{f}$ has a fixed point.\end{thm}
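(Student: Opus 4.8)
The plan is to establish the two implications separately; both are essentially formal, resting only on the uniformization theorem and the precise construction of $\Sigma_f$, so the real work is bookkeeping with markings and orbifold structures rather than hard analysis. (The genuinely deep part of Thurston's theory, an effective criterion deciding \emph{when} $\Sigma_f$ has a fixed point, lies beyond this bare equivalence.)

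First I would recall precisely how $\Sigma_f$ is defined. A point $\tau\in\Tau_f$ is represented by a marking, an orientation-preserving homeomorphism $\phi\colon S^2\to\P^1$, with two markings representing the same point of $\Tau_f$ exactly when they differ by a M\"obius transformation composed with an isotopy rel $P_f$. Given $\phi$ representing $\tau$, one pulls the standard complex structure back first by $\phi$ and then by $f$, obtaining a complex structure on $S^2$ away from the finitely many points of $f^{-1}(P_f)\smallsetminus P_f$; the removable-singularities theorem extends it across those points, and --- this is the reason the weights $\nu_f$ are defined as least common multiples --- the extension is compatible with the orbifold $\mathcal{O}_f$. Uniformizing yields a marking $\psi$ together with a holomorphic, hence rational, map $g_\tau:=\phi\circ f\circ\psi^{-1}\colon\P^1\to\P^1$, and one sets $\Sigma_f(\tau):=[\psi]$.

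For the implication ``fixed point $\Rightarrow$ rational'', I would take $\tau_*$ with $\Sigma_f(\tau_*)=\tau_*$, build $\phi$, $\psi$ and $g:=g_{\tau_*}$ as above, and observe that since $[\psi]=[\phi]$ one may adjust $\psi$ by a M\"obius transformation (absorbing the change into $g$) so that $\psi$ is isotopic to $\phi$ rel $P_f$; then $\phi\circ f=g\circ\psi$ with $g$ rational and $\phi\simeq\psi$ rel $P_f$ is precisely an equivalence of $f$ with $g$. For the converse, given $h_0\circ f=g\circ h_1$ with $g$ rational and $h_0\simeq h_1$ rel $P_f$, I would first check, by induction on $m$ using $h_0\simeq h_1$, that $h_0$ matches the ramification data of $f^{\circ m}$ with that of $g^{\circ m}$, so that $\nu_f=\nu_g\circ h_0$ and $h_0$ (equivalently $h_1$) is an isomorphism of orbifolds $\mathcal{O}_f\to\mathcal{O}_g$; pulling complex structures back along it gives a bijection $A\colon\Tau_g\to\Tau_f$, independent of whether one uses $h_0$ or $h_1$ because they are isotopic rel $P_f$. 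A short pullback computation, $\Sigma_f(h_0^{*}\sigma)=(h_0\circ f)^{*}\sigma=(g\circ h_1)^{*}\sigma=h_1^{*}(g^{*}\sigma)=A(\Sigma_g(\sigma))$, then shows $\Sigma_f\circ A=A\circ\Sigma_g$; since $g$ is holomorphic for the standard structure $\tau_0$ on $\P^1$ we have $\Sigma_g(\tau_0)=\tau_0$, and therefore $A(\tau_0)$ is a fixed point of $\Sigma_f$.

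The hardest part of the argument is not either implication in isolation but the well-definedness asserted in the second paragraph: that the complex structure pulled back by $f$ genuinely extends over $f^{-1}(P_f)\smallsetminus P_f$ and descends to $\mathcal{O}_f$, so that $\Sigma_f$ is a self-map of $\Tau_f$ and not merely of some larger Teichm\"uller space --- which is exactly where the least-common-multiple definition of $\nu_f$ is forced. A subsidiary point needing care is the claim that isotopic markings induce the same map on Teichm\"uller space, which is what makes $A$, and hence the conjugacy $\Sigma_f\circ A=A\circ\Sigma_g$, independent of the choice between $h_0$ and $h_1$.
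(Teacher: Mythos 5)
The paper does not actually prove this statement: it is quoted as Thurston's theorem with a citation to Douady--Hubbard, so there is no in-paper argument to compare against. Your sketch is the standard formal proof of this equivalence (Proposition~2.3 of that reference) and is correct in both directions --- uniformization of the pulled-back structure for ``fixed point $\Rightarrow$ rational'', and the conjugacy $\Sigma_f\circ A=A\circ\Sigma_g$ together with $\Sigma_g(\tau_0)=\tau_0$ for the converse. The one step deserving more care than your phrase ``by induction on $m$ using $h_0\simeq h_1$'' suggests is the verification that $\nu_f=\nu_g\circ h_0$: from $h_0\circ f=g\circ h_1$ one must \emph{lift the isotopy} $h_0\simeq h_1$ rel $P_f$ through the branched covers $f$ and $g$ to produce homeomorphisms $h_m$ with $h_{m-1}\circ f=g\circ h_m$, and it is these lifts (not $h_0$ itself) that transport the local degrees of $f^{\circ m}$ to those of $g^{\circ m}$; this is routine but is the actual content of that induction.
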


One would expect it to be rare (if it happens at all) for a Thurston map $f$ to have the pullback map $\Sigma_{f}$ be constant. In the Buff-Epstein-Koch-Pilgrim paper \cite{BEKP}, they give an example where the pullback map $\Sigma_{f}$ is constant and characterize when $\Sigma_{f}$ is constant. The example uses a result of McMullen (Proposition 5.1 on \cite{BEKP}) to construct a Thurston map $f$ with $\Sigma_{f}$ constant by having $\Sigma_{f}$ factor through a trivial Teichm\"uller space. That result is formulated as follows.

\begin{thm}$\mathrm{(McMullen).}\label{teo:12}$ Let $s:\P^{1}\to\P^{1}$ and $g:\P^{1}\to\P^{1}$ be rational maps with critical value sets $V_{s}$ and $V_{g}$. Let $A\subset\P^{1}$. Assume $V_{s}\subseteq A$ and $V_{g}\cup g(A)\subseteq s^{-1}(A)$. Then
\begin{itemize}
\item $f=g\circ s$ is a Thurston map,
\item $V_{g}\cup g(V_{s})\subseteq P_{f}\subseteq V_{g}\cup g(A)$ and
\item The dimension of the image of $\Sigma_{f}:\text{Teich}(\P^{1},P_{f})\to\text{Teich}(\P^{1},P_{f})$ is at most $|A|-3$.
\end{itemize}
\end{thm}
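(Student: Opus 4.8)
The plan is to settle the first two bullet points by bookkeeping of critical values, and to obtain the dimension estimate in the third by factoring the pullback map through $\text{Teich}(\P^{1},A)$. To begin, $f=g\circ s$ is a branched cover, and the chain rule shows that a point is critical for $f$ exactly when it is critical for $s$ or its $s$-image is critical for $g$. Pushing critical points forward by $f$ then yields $V_{f}=V_{g}\cup g(V_{s})$: the inclusion ``$\subseteq$'' is immediate from the chain-rule description, while for ``$\supseteq$'' one exhibits, for each point of $V_{g}$, an $s$-preimage of a critical point of $g$, and for each point of $g(V_{s})$, a critical point of $s$ lying over the relevant point of $V_{s}$; both are critical points of $f$. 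Since $P_{f}=\bigcup_{n\ge 0}f^{\circ n}(V_{f})$, it remains to iterate. The hypotheses give $V_{f}=V_{g}\cup g(V_{s})\subseteq V_{g}\cup g(A)\subseteq s^{-1}(A)$, and, using that $s$ is surjective, $f\big(s^{-1}(A)\big)=g\big(s(s^{-1}(A))\big)=g(A)\subseteq s^{-1}(A)$; thus $s^{-1}(A)$ is forward invariant under $f$, and one application of $f$ already carries $V_{f}$ into $g(A)$. Hence $f^{\circ n}(V_{f})\subseteq V_{g}\cup g(A)$ for all $n\ge 1$, so $P_{f}\subseteq V_{g}\cup g(A)$, which is finite; therefore $f$ is a Thurston map. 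The reverse inclusion $V_{g}\cup g(V_{s})=V_{f}\subseteq P_{f}$ is clear, so the first two items hold.

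For the third item I would set $Z:=s^{-1}(A)$; the computations above give $P_{f}\subseteq Z$, $f(Z)\subseteq Z$, and $V_{f}\subseteq Z$, so the pullback construction also produces a map $\Sigma_{f,Z}\colon\text{Teich}(\P^{1},Z)\to\text{Teich}(\P^{1},Z)$, and $\Sigma_{f,Z}$ covers $\Sigma_{f}$ along the forgetful map $\pi\colon\text{Teich}(\P^{1},Z)\to\text{Teich}(\P^{1},P_{f})$, i.e. $\pi\circ\Sigma_{f,Z}=\Sigma_{f}\circ\pi$, because the complex structure pulled back under $f$ depends only on the ambient sphere, not on which points have been marked. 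As $\pi$ is surjective, the image of $\Sigma_{f}$ is the $\pi$-image of the image of $\Sigma_{f,Z}$, so it suffices to bound the dimension of the latter.

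Next I would factor $\Sigma_{f,Z}$ as $\text{Teich}(\P^{1},Z)\xrightarrow{\,g^{*}\,}\text{Teich}(\P^{1},g^{-1}(Z))\xrightarrow{\,\rho\,}\text{Teich}(\P^{1},Z)$, where $g^{*}$ is pullback under $g$ (legitimate since $V_{g}\subseteq Z$) and $\rho$ is pullback under $s$ followed by the forgetful map back down to $Z$ (legitimate since $V_{s}\subseteq A\subseteq g^{-1}(Z)$ — the second inclusion being exactly the hypothesis $g(A)\subseteq s^{-1}(A)$ — and since $Z\subseteq f^{-1}(Z)$). The heart of the matter is that $\rho$ descends through the forgetful map $\text{Teich}(\P^{1},g^{-1}(Z))\to\text{Teich}(\P^{1},A)$: if complex structures $\sigma,\sigma'$ on $\P^{1}$ are isotopic rel $A$, say $\sigma'=\phi^{*}\sigma$ with $\phi\simeq\mathrm{id}$ rel $A$, then — since $A\supseteq V_{s}$, so that $s$ restricts to an unbranched covering of $\P^{1}\setminus A$ — this isotopy lifts through $s$ to an isotopy of $\P^{1}$ from $\mathrm{id}$ to a homeomorphism $\widetilde\phi$ with $s\circ\widetilde\phi=\phi\circ s$ and $\widetilde\phi\simeq\mathrm{id}$ rel $s^{-1}(A)=Z$; hence $s^{*}\sigma'=(s\circ\widetilde\phi)^{*}\sigma=\widetilde\phi^{*}(s^{*}\sigma)$ defines the same point of $\text{Teich}(\P^{1},Z)$ as $s^{*}\sigma$. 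Consequently the image of $\Sigma_{f,Z}$ lies in the image of the induced holomorphic map $\text{Teich}(\P^{1},A)\to\text{Teich}(\P^{1},Z)$, whose source has dimension $|A|-3$; pushing forward by $\pi$ we conclude that the image of $\Sigma_{f}$ has dimension at most $|A|-3$. (We may assume $|A|\ge 3$; when $|A|=3$ the space $\text{Teich}(\P^{1},A)$ is a point and the same argument shows $\Sigma_{f}$ is constant.)

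I expect the one genuinely delicate point to be the lifting step: one must check that an isotopy of the base that is stationary on the branch values $V_{s}$ lifts to an isotopy of the total space that is stationary on the entire fibre $s^{-1}(V_{s})$ — in particular fixing the branch points themselves — and that this is precisely the compatibility needed for $\rho$ to descend to $\text{Teich}(\P^{1},A)$. The same lifting principle underlies the well-definedness of $g^{*}$, $\rho$, and $\Sigma_{f,Z}$; apart from it, the argument is a diagram chase through the inclusions among $V_{s}$, $V_{g}$, $A$, and $s^{-1}(A)$ supplied by the hypotheses.
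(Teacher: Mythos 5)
The paper does not prove this statement; it is quoted from Buff--Epstein--Koch--Pilgrim (Proposition 5.1 of \cite{BEKP}). Your argument is correct and is essentially the standard proof given there: the chain-rule bookkeeping $V_{f}=V_{g}\cup g(V_{s})$ together with the forward invariance of $s^{-1}(A)$ yields the first two bullets, and the factorization $f^{*}=s^{*}\circ g^{*}$, with $s^{*}$ descending through the forgetful map to $\text{Teich}(\P^{1},A)$ because $V_{s}\subseteq A$, gives the dimension bound $|A|-3$.
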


We refer to the assumptions of this Theorem as the \begin{em}McMullen's constant conditions\end{em}. We showed in \cite{S} that not every Thurston map whose Teichm\"uller map is constant satisfies the McMullen's constant conditions. The Teichm\" uller map associated to the rational map $f(z)=-\Sqrt[3]{2}z(z^3+2)/(2z^3+1)$ is constant (see Appendix D of \cite{S}). The ramification portrait for this map $f$ is: 
\[\xymatrix{
x \ar[r]^-{3} &-\Sqrt[3]{2}x \ \ar[rd]\\
y \ar[r]^-{3} &-\Sqrt[3]{2}y \ar[r] & 0\ar@(dr,ur)[]\\
z \ar[r]^-{3} &-\Sqrt[3]{2}z\ar[ru]\\ 
}\]

where $x=-1/2+\sqrt{3}i/2$, $y=-1/2-\sqrt{3}i/2$, and $z=1$. However, $f$ cannot be written as the composition of two maps because $\deg(f)=4$ and the local degree of $f$ at every critical point is $3$. Other examples of Thurston maps whose induced maps on Teichm\"uller space are constant and that do not satisfy the McMullen's constant conditions can be found in the class of NET maps (see Section 3). We are particularly interested in NET maps for which the induced pullback map is constant.
Theorem 10.2 of \cite{CFPP} provides an algebraic characterization of those NET maps whose induced maps on Teichm\"uller space are constant. This characterization reduces to the existence of $nonseparating\ sets$ for finite Abelian groups generated by two elements. Our main result is focused on this purely algebraic problem.
\begin{thm}\label{thm}$\mathrm{(Main\ Theorem).}$ Let $A$ be a finite Abelian group generated by two elements such that $A/2A\cong\mathbb{Z}/2\mathbb{Z}\oplus\mathbb{Z}/2\mathbb{Z}$. If 
$|A|=4p_{1}^{k_{1}}p_{2}^{k_{2}}\cdots p_{n}^{k_{n}}$ with  $p_{i}$ prime, $p_{i}\geq 13$ and $|k|=k_{1}+k_{2}+\cdots +k_{n}\geq1$, then $A$ does not contain a nonseparating subset.
\end{thm}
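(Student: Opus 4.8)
The plan is to use the structure theory of finite abelian groups to reduce the statement to a finite combinatorial problem in a group of order $4p$ or $4p^{2}$, and there to derive a contradiction from the inequality $p\ge13$.

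\medskip\noindent\emph{Step 1: normal form for $A$.} Since $A$ is generated by two elements, its invariant‑factor decomposition has the form $A\cong \Z/2a\Z\oplus\Z/2b\Z$ with $a\mid b$: the hypothesis $A/2A\cong\mathbb{Z}/2\mathbb{Z}\oplus\mathbb{Z}/2\mathbb{Z}$ forces both invariant factors to be even, and comparing orders gives $ab=m:=p_{1}^{k_{1}}\cdots p_{n}^{k_{n}}$, which is odd. Hence $A\cong(\mathbb{Z}/2\mathbb{Z})^{2}\oplus B$, where $B:=\Z/a\Z\oplus\Z/b\Z$ has odd order $m>1$; for each $p=p_{i}$ the $p$‑Sylow subgroup of $A$ has rank $r_{p}\in\{1,2\}$. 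Because multiplication by $2$ is a bijection on $B$, one checks $2pA=pB$, so the quotient map $q_{p}\colon A\to A/2pA$ identifies $A/2pA$ with $(\mathbb{Z}/2\mathbb{Z})^{2}\oplus(\mathbb{Z}/p\mathbb{Z})^{r_{p}}$. Write $G_{1}:=(\mathbb{Z}/2\mathbb{Z})^{2}\oplus\mathbb{Z}/p\mathbb{Z}$ and $G_{2}:=(\mathbb{Z}/2\mathbb{Z})^{2}\oplus\zp$.

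\medskip\noindent\emph{Step 2: reduction to $G_{1}$ and $G_{2}$.} I would show that a nonseparating subset pushes forward under $q_{p}$: if $S\subseteq A$ is nonseparating, then $q_{p}(S)$ is a nonseparating subset of $A/2pA$. This is the step that invokes the precise definition and the elementary properties of nonseparating sets from Section~3 (following \cite{CFPP}); the point is that $\ker q_{p}=pB$ has odd order and sits inside the ``odd part'' of $A$, so the features that an arbitrary quotient could spoil are here preserved. Granting this, the theorem reduces to showing that, for $p\ge13$, neither $G_{1}$ nor $G_{2}$ contains a nonseparating subset: for if $A$ did contain one, choosing any $p=p_{i}$ (one exists since $|k|\ge1$) would yield a nonseparating subset of $G_{1}$ or $G_{2}$ with $p\ge13$.

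\medskip\noindent\emph{Step 3: the count.} For a hypothetical nonseparating subset $S$ of $G_{1}$ or $G_{2}$ I would read off from the definition, on one side, the symmetry $S=-S$, the omission of $0$, and a ``covering'' requirement forcing $S$ to meet each member of a prescribed family of subgroups (or cosets of index‑$p$ subgroups) --- in $G_{2}$ this family includes the $p+1$ lines through the origin of the $\zp$‑summand, coupled with the $(\mathbb{Z}/2\mathbb{Z})^{2}$‑summand. Counting $\pm$‑orbits then gives a lower bound for $|S|$ that grows linearly in $p$; on the other side, the compatibility conditions (parity relative to $A/2A$, and compatibility with the quotients used in Step~2) give a competing upper bound for $|S|$. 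The two are consistent only for small $p$, and a direct check places the crossover between $p=11$ and $p=13$. The cyclic case $G_{1}$ is handled more quickly, since there the odd part is just $\mathbb{Z}/p\mathbb{Z}$; the substantial work is the rank‑two group $G_{2}$.

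\medskip\noindent\emph{Main obstacle.} The difficulty is concentrated in the $G_{2}$ analysis, in two places. First, making the push‑forward of Step~2 rigorous: a nonseparating set need not remain nonseparating under an arbitrary homomorphism, so one must use the specific shape of $A$ (the $2$‑part being exactly $(\mathbb{Z}/2\mathbb{Z})^{2}$ and the complement odd) to certify that $q_{p}$ is harmless, possibly routing through an intermediate quotient. Second, carrying the rank‑two count through \emph{uniformly in $p$} and sharply enough to reach precisely the threshold $13$ rather than a weaker bound; I expect most of the casework to live here, the $(\mathbb{Z}/2\mathbb{Z})^{2}$‑summand and the $p$‑part being linked by the definition of ``nonseparating'' in a way that resists a clean product argument.
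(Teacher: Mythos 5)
Your Step 2 is the fatal gap. The paper has no quotient lemma, and for good reason: the definition of a nonseparating set only constrains coset numbers relative to \emph{cyclic} subgroups $B$ with $A/B$ cyclic, and the preimage in $A$ of a cyclic subgroup of $A/2pA$ with cyclic quotient need not be cyclic (e.g.\ if the $p$-Sylow subgroup of $A$ is $(\Z/p^{2}\Z)^{2}$, the preimage of a line in $(\Z/p\Z)^{2}$ has $p$-part $\Z/p^{2}\Z\oplus\Z/p\Z$). So $A/2pA$ carries tests that are not induced from any test in $A$, and there is no reason the image of a nonseparating set passes them. A sanity check that this step cannot be repaired cheaply: if it were valid, your reduction would prove the theorem for all $p_{i}\ge 5$, since the target groups $(\Z/2\Z)^{2}\oplus\Z/p\Z$ and $(\Z/2\Z)^{2}\oplus(\Z/p\Z)^{2}$ are already handled by Theorems \ref{thm:911} and \ref{thm:57}; the threshold $13$ would then be superfluous, whereas it is essential to the paper's argument. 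The closure operations that actually hold are the ones the paper proves: nonseparating sets pass to overgroups (Lemma \ref{lem:56}) and restrict to subgroups containing them (Lemma \ref{lem:a05}); accordingly the paper's proof is an induction on the number of prime factors, with the case $\langle H\rangle\neq A$ handled by Lemma \ref{lem:a05} and the inductive hypothesis rather than by any quotient.

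Step 3 also misreads the definition: a nonseparating subset is by definition a union of exactly four $\pm$-pairs (at most $8$ elements, possibly containing $0$) subject to the coset-number condition $c_{2}=c_{3}$; there is no covering requirement and no lower bound on its size growing with $p$, so your ``crossover at $13$'' has no content. Where $13$ actually enters is Lemma \ref{lem:60}: the difference set $D=\{\phi(h_{i})\pm\phi(h_{j}):i<j\}$ has at most $2\binom{4}{2}=12$ elements, so when every prime is at least $13$ one can choose, in each Sylow component, a cyclic subgroup $G$ with cyclic quotient meeting $D$ only in $0$. Lemma \ref{lem:a6} then normalizes $H$ so that $h_{2},h_{3},h_{4}$ differ by elements of order $2$ while $\phi(h_{1}),\phi(h_{2})$ generate the odd part, after which a single well-chosen cyclic subgroup $\tilde{B}$ (constructed inside $\Z/b\Z\oplus\Z/b\Z$ via Lemma \ref{lem:56}) forces $\phi(h_{1})$ into $\tilde{B}$ and contradicts cyclicity. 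None of this machinery appears in your outline, so the proposal as written does not yield a proof.
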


As a consequence of this, if $n$ is a positive integer with prime factorization $n=p_{1}^{k_{1}}p_{2}^{k_{2}}\cdots p_{n}^{k_{n}}$ where each $p_{i}$ is at least $13$, then there does not exist a NET map with degree $n$ whose Teichm\"uller map is constant.

This paper is organized as follows. Section 2 sets notation, reviews results and provides new algebraic properties related to $nonseparating\ sets$ which will be needed in the sequel. Section 3 introduces NET maps, reviews basic facts and applies the theory of Section 2 in the construction of two examples of NET maps whose Teichm\" uller maps are constant. One of these examples does not satisfy the McMullen's constant conditions. In Section 4 we investigate when the induced pullback map on Teichm\"uller space of NET maps cannot be constant. In Section 5 we show that no Thurston map of degree 2 has constant pullback map. 

The results described in this paper are part of the author's PhD thesis under the supervision of William Floyd. The author would like to thank William Floyd for his essential guidance and for his constant help, support and patience. The author also would like to thank Walter Parry for several helpful discussions, useful comments and for sharing his mathematical insights.

\section{Coset numbers and nonseparating sets}

In this section, we first review the definitions and facts on $coset\ numbers$ and $nonseparating\ sets$. Then we prove the converse of Lemma \ref{lem:56} and a technical lemma relevant in the proofs of the main results.

Let $A$ be a finite abelian group. Let $H$ be a subset of $A$ which is the disjoint union of four pairs $\{\pm h_{1}\},$ $\{\pm h_{2}\},$ $\{\pm h_{3}\},$ $\{\pm h_{4}\}$. (It is possible that $h_{i}=-h_{i}$.) Let $B$ be a subgroup such that $A/B$ is cyclic, and let $a$ be an element of $A$ so that $a+B$ generates $A/B$. Let $n$ be the order of $A/B$. For each $k\in\{1,2,3,4\}$ there exists a unique integer $c$ with $0\leq c\leq n/2$ such that $(ca+B)\cap\{\pm h_{k}\}\neq\emptyset$.
Let $c_{1},c_{2},c_{3},c_{4}$ be these four integers ordered so that $0\leq c_{1}\leq c_{2}\leq c_{3}\leq c_{4}$. These four numbers are called \begin{em}coset numbers\end{em} for $H$ relative to $B$ and the generator $a+B$ of $A/B$.

Let $A$ be a finite Abelian group. A subset $H$ of $A$ is called \begin{em}nonseparating\end{em} if and only if it satisfies the following conditions:
\begin{itemize}
\item $H$ is a disjoint union of the form $H=H_{1}\coprod H_{2}\coprod H_{3}\coprod H_{4}$, where each $H_{i}$ has the form $H_{i}=\{\pm h_{i}\}$. (It is possible that $h_{i}=-h_{i}$.)                          
\item Let $B$ be a cyclic subgroup of $A$ such that $A/B$ is cyclic. Let  $c_{1},c_{2},c_{3},c_{4}$ be the coset numbers for $H$ relative to $B$ and some generator of $A/B$. The main condition is that $c_{2}=c_{3}$ for every such choice of $B$ and generator of $A/B$.
\end{itemize}

\begin{ex}\label{ex:1}  Let $A=\Z/3\Z\oplus\Z/3\Z$. The subset $H=A\setminus\{(0,0)\}$ is a nonseparating subset of $A$. In fact, let $B$ be a cyclic subgroup of $A$ so that $A/B$ is cyclic. Then $B\cong A/B\cong\Z/3\Z$. Given a generator $a+B$ of $A/B$ we have only three cosets: $B$, $a+B$ and $2a+B$. It is obvious that $B$ contains exactly one pair of mutually inverse elements of order 3. So $c_{1}=0$ and $c_{2}=c_{3}=c_{4}=1$.\end{ex} 

\begin{ex}\label{ex:3}  In this example we show that $H_{1}=\{\pm(1,0),\pm(0,1),\pm(1,2),\pm(2,1)\}$ is a nonseparating subset of $A=\Z/4\Z\oplus\Z/4\Z$. Let $B$ be a cyclic subgroup of $A$ such that $A/B$ is cyclic. Then $B\cong A/B\cong \Z/4\Z$. There are only six possible choices for $B$: $\left<(1,0)\right>$,  $\left<(0,1)\right>$, $\left<(1,2)\right>$, $\left<(2,1)\right>$, $\left<(3,1)\right>$, and  $\left<(1,1)\right>$. If $B=\left<(1,1)\right>$ or $B=\left<(1,3)\right>$, one verifies in these cases that $c_{1}=c_{2}=c_{3}=c_{4}=1$. If $B\neq\left<(1,1)\right>$ and $B\neq\left<(1,3)\right>$, one verifies in these cases that $c_{1}=0$ and $c_{2}=c_{3}=1$.
\end{ex} 

\begin{ex}\label{ex:4}  In this example we show that $H_{2}=\{\pm(1,0),\pm(0,1),\pm(1,1),\pm(1,3)\}$ is also a nonseparating subset of $A=\Z/4\Z\oplus\Z/4\Z$. If $B=\left<(1,2)\right>$ or $B=\left<(2,1)\right>$, one verifies in these cases that $c_{1}=c_{2}=c_{3}=1$. If $B\neq\left<(1,2)\right>$ and $B\neq\left<(2,1)\right>$, one verifies in these cases that $c_{1}=0$ and $c_{2}=c_{3}=1$.
\end{ex} 

The next two lemmas provide ways to produce nonseparating subsets from known ones. For details of the proofs, see Section 10 of \cite{CFPP}.

\begin{lem}\label{lem:55} Let $A$ be a finite Abelian group, and let $H$ be a nonseparating subset of $A$. If $\varphi:A\to A$ is a group automorphism and $h$ is an element of order 2 in $A$,  then $\varphi(H)+h$ is a nonseparating subset of $A$.\end{lem}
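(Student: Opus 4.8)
The plan is to verify the two defining conditions of a nonseparating set directly for $\varphi(H)+h$, using the fact that $H$ already satisfies them. First I would check the structural condition: since $H = H_1 \coprod H_2 \coprod H_3 \coprod H_4$ with $H_i = \{\pm h_i\}$, applying the automorphism $\varphi$ gives $\varphi(H_i) = \{\pm\varphi(h_i)\}$, still a set of the form $\{\pm h\}$, and the four pieces remain disjoint because $\varphi$ is injective. Translating by the order-$2$ element $h$ preserves the shape $\{\pm h_i\}$: indeed $\varphi(h_i)+h$ and $-\varphi(h_i)+h = -(\varphi(h_i)-h) = -(\varphi(h_i)+h)$ (the last equality because $2h=0$), so $\varphi(H_i)+h = \{\pm(\varphi(h_i)+h)\}$. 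Disjointness of the translated pieces again follows since translation is a bijection. Hence $\varphi(H)+h$ has the required form with representatives $h_i' = \varphi(h_i)+h$.

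Next I would handle the coset-number condition. Fix a cyclic subgroup $B$ of $A$ with $A/B$ cyclic, and a generator $a+B$. The key observation is that $\varphi^{-1}$ carries this data to comparable data for $H$: let $B' = \varphi^{-1}(B)$, which is cyclic with $A/B' \cong A/B$ cyclic, generated by $\varphi^{-1}(a)+B'$. I claim the coset numbers of $\varphi(H)+h$ relative to $(B,a+B)$ coincide with those of $H$ relative to $(B',\varphi^{-1}(a)+B')$. For the automorphism part this is immediate: $(ca+B)\cap\{\pm(\varphi(h_i))\}\neq\emptyset$ iff $\varphi^{-1}(ca+B)\cap\{\pm h_i\}\neq\emptyset$ iff $(c\varphi^{-1}(a)+B')\cap\{\pm h_i\}\neq\emptyset$, so the same integer $c$ is selected for each index. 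For the translation by $h$: since $h$ has order $2$, either $h\in B$, in which case adding $h$ does not change any coset $ca+B$ and the coset numbers are unchanged; or $h\notin B$, in which case (as $A/B$ is cyclic and $2h=0$) the image $h+B$ is the unique element of order $2$ in $A/B$, namely $(n/2)a + B$ where $n = |A/B|$ is even. Then adding $h$ sends the coset $ca+B$ to $(c+n/2)a+B$; passing to the representative $c'$ with $0\le c'\le n/2$ amounts to replacing $c$ by $|n/2 - c|$ (or $n/2$ when $c=0$). This is an order-reversing involution on $\{0,1,\dots,n/2\}$, so it sends an ordered quadruple $c_1\le c_2\le c_3\le c_4$ (for $H$ relative to $(B',\varphi^{-1}(a)+B')$) to the reversed-then-reordered quadruple $(n/2-c_4)\le(n/2-c_3)\le(n/2-c_2)\le(n/2-c_1)$ for $\varphi(H)+h$. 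In particular the middle two entries become $n/2 - c_3$ and $n/2 - c_2$; since $H$ is nonseparating we have $c_2 = c_3$ for its data, hence these are equal, giving $c_2' = c_3'$ for $\varphi(H)+h$.

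Since $B$ and the generator were arbitrary, the main condition $c_2' = c_3'$ holds for every choice, so $\varphi(H)+h$ is nonseparating. The main obstacle is the bookkeeping in the translation step — correctly tracking how adding an order-$2$ element acts on the coset numbers after the normalization $0\le c\le n/2$, and confirming that the resulting reindexing preserves the property $c_2 = c_3$ rather than some other symmetry; the automorphism step is essentially formal. One should also take care with the boundary cases $c=0$ and $c=n/2$ and with the possibility $h_i = -h_i$, but these do not affect the argument since the selected integer $c$ is defined the same way regardless.
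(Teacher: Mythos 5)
Your proof is correct. The paper does not actually reproduce an argument for this lemma (it defers to Section 10 of \cite{CFPP}), but your direct verification is the natural one: the structural condition is preserved because $2h=0$ gives $-\varphi(h_i)+h=-(\varphi(h_i)+h)$, the automorphism part is handled by transporting $(B,a+B)$ to $(\varphi^{-1}(B),\varphi^{-1}(a)+\varphi^{-1}(B))$, and translation by $h$ either fixes all coset numbers (when $h\in B$) or acts on the normalized coset numbers as the order-reversing involution $c\mapsto n/2-c$ of $\{0,\dots,n/2\}$, which sends the ordered quadruple $(c_1,c_2,c_3,c_4)$ to $(n/2-c_4,n/2-c_3,n/2-c_2,n/2-c_1)$ and therefore preserves the condition $c_2=c_3$.
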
 

\begin{lem}\label{lem:56} If $A$ is a finite Abelian group and if $A'$ is a subgroup of $A$, then every subset of $A'$ which is nonseparating for $A'$ is nonseparating for $A$. 
\end{lem}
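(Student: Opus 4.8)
The plan is to verify the non-separation condition for $H$ in $A$ using the very same partition $H = H_{1}\coprod H_{2}\coprod H_{3}\coprod H_{4}$ that witnesses it in $A'$, by showing that every admissible test datum $(B, a+B)$ for $A$ restricts to an admissible one for $A'$ whose coset numbers are obtained from those of $(B, a+B)$ by dividing out a common positive integer.

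Concretely, I would fix a cyclic subgroup $B\le A$ with $A/B$ cyclic, a generator $a+B$ of $A/B$, and put $n = |A/B|$. Set $B' := B\cap A'$. Then $B'$ is cyclic (a subgroup of the cyclic group $B$) and $A'/B' \cong (A'+B)/B$ is cyclic (a subgroup of the cyclic group $A/B$), so $B'$ together with a suitable generator of $A'/B'$ is a legitimate test pair for the non-separating condition in $A'$. Let $n' = |A'/B'|$, so $n' \mid n$, and $d := n/n' \ge 1$. Since $A/B$ is cyclic of order $n$, the image of $A'$ in $A/B$ is its unique subgroup of order $n'$, namely $\langle d(a+B)\rangle$; so I may choose $a'\in A'$ with $a'+B = d(a+B)$, and then $a'+B'$ generates $A'/B'$.

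The key point is the scaling identity $c_{k} = d\,c_{k}'$ for $k = 1,2,3,4$, where $c_{k}$ and $c_{k}'$ denote the coset numbers of $\{\pm h_{k}\}$ relative to $(B, a+B)$ and $(B', a'+B')$ respectively. Indeed, since $h_{k}\in A'$ its image in $A/B$ lies in $\langle d(a+B)\rangle$, so $h_{k}+B = m_{k}d(a+B)$ for some integer $m_{k}$, and then $h_{k} - m_{k}a' \in A'\cap B = B'$, so $h_{k}+B' = m_{k}(a'+B')$; reducing $m_{k}$ modulo $n'$ to $r_{k}\in\{0,\dots,n'-1\}$ and using $n = dn'$ (so $m_{k}d \equiv r_{k}d \pmod n$ with $0\le r_{k}d < n$) gives $c_{k}' = \min(r_{k}, n'-r_{k})$ and $c_{k} = \min(r_{k}d,\, n - r_{k}d) = d\min(r_{k}, n'-r_{k}) = d\,c_{k}'$. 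Because $d \ge 1$, multiplication by $d$ preserves order, so the sorted coset numbers for $(B, a+B)$ are $d$ times those for $(B', a'+B')$. Since $H$ is nonseparating for $A'$, the sorted numbers $c_{1}' \le c_{2}' \le c_{3}' \le c_{4}'$ satisfy $c_{2}' = c_{3}'$, hence the sorted numbers for $(B, a+B)$ satisfy $d c_{2}' = d c_{3}'$. As $B$ and the generator were arbitrary, $H$ is nonseparating for $A$.

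The main thing to get right is that $A'/(B\cap A')$ really is cyclic — that is exactly what makes $B\cap A'$ an admissible test subgroup for $A'$, and it is the crux that allows the reduction — together with a routine check of the degenerate cases ($n' = 1$, or $h_{k}$ of order dividing $2$), in which the scaling identity $c_{k} = d\,c_{k}'$ continues to hold verbatim.
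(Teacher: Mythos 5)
Your proof is correct. The paper does not actually prove Lemma \ref{lem:56} itself (it defers to Section 10 of \cite{CFPP}), but your argument --- restricting an arbitrary test pair $(B,a+B)$ for $A$ to the pair $(B\cap A',\,a'+(B\cap A'))$ for $A'$, noting $B\cap A'$ is cyclic and $A'/(B\cap A')\cong(A'+B)/B$ is cyclic, and verifying the scaling identity $c_{k}=d\,c_{k}'$ --- is exactly the standard one, and it is the mirror image of the computation the paper does write out for the converse (Lemma \ref{lem:a05}), where a test pair for $A'$ is extended to one for $A$ and the same relation $c_{k}=(n/m)c_{k}'$ drives the conclusion.
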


\begin{ex}\label{ex:2} Let $A=\Z/4\Z\oplus\Z/2\Z$. The set $H=\{(0,0),\pm(1,0),\pm(2,0),\pm(1,1)\}$ is a nonseparating subset of $A$. For details of the proof see Example 10.3 of  \cite{CFPP}. By Lemma \ref{lem:56}, $H_{3}=\{(0,0),\pm(1,0),\pm(2,0),\pm(1,2)\}$ is a nonseparating subset of $\Z/4\Z\oplus\Z/4\Z$.\end{ex} 

In an unpublished result, Walter Parry found that up to automorphisms followed by a translation by an element of order 2, $H_{1}$, $H_{2}$ and $H_{3}$ are the only distinct nonseparating subsets of $\Z/4\Z\oplus\Z/4\Z$.

\begin{ex}\label{ex:5} Let $k$ be an integer with $k\geq3$. Let $A=\Z/2^{k}\Z\oplus\Z/2\Z$ and let $H=\{\pm(1,0),\pm(2^{k-2},0),\pm(2^{k-2},1),\pm(2^{k-1}-1,0)\}$. We show that $H$ is a nonseparating subset of $A$. Let $B$ be a cyclic subgroup of $A$ such that $A/B$ is cyclic. Then either $|B|=2^{k}$ or $|B|=2$. First suppose that $|B|=2^{k}$. Then $A/B\cong\Z/2\Z$. Given a generator $a+B$ of $A/B$ we have only two cosets: $B$ and $a+B$. In this case to show that $H$ does not separate $c_{2}$ from $c_{3}$ it suffices to prove that $B$ does not contain exactly two elements of $H$. If $(1,0)\in B$, then $(2^{k-2}, 0)$ and $(2^{k-2}-1,0)\in B$. The same is true if $(2^{k-2}-1,0)\in B$. So if $B$ contains exactly two elements of $H$, then these elements are $(2^{k-2},0)$ and $(2^{k-2},1)$. But then $(0,1)\in B$. This is impossible.

Now suppose that $|B|=2$. Then either $B=\left<(0,1)\right>$ or $B=\left<(2^{k-1},1)\right>$. Let $a\in A$ such that $a+B$ generates $A/B$. The first component of $a$ has the form $4r\pm1$ for some integer $r$. Hence $2^{k-2}a+B=\pm(2^{k-2},0)+B$, and so the coset number of $\pm(2^{k-2},0)$ is $2^{k-2}$. Similarly, one verifies that the coset number of $\pm(2^{k-2},1)$ is $2^{k-2}$. Let $m$ be the integer in $\{0,\cdots,2^{k-1}\}$ such that $m(a+B)=\pm(1,0)+B$. Then $(2^{k-1}-m)(a+B)=\pm(2^{k-1}-1,0)+B$. So if $m$ is the coset number of $\pm(1,0)$, then $2^{k-1}-m$ is the coset number of $\pm(2^{k-1}-1,0)$. Thus, $\{c_{1},c_{4}\}=\{m, 2^{k-1}-m\}$ and $c_{2}=c_{3}=2^{k-2}$. This proves that $H$ is a nonseparating subset of $A$.\end{ex}

\begin{rem} Let $A$, $H$ be as in Example \ref{ex:5}. The group $A$ contains the subgroup $\left<(2^{k-2},0)\right>\oplus\Z/2\Z$ which is isomorphic to $\Z/4\Z\oplus\Z/2\Z$. By Lemma \ref{lem:56} and Example \ref{ex:2} above, $H'=\{(0,0),\pm(2^{k-2},0),\pm(2^{k-1},0),\pm(2^{k-2},1)\}$ is also a nonseparating subset of $A$. If there were an element $h$ of order 2 in $A$ and an automorphism $\varphi$ of $A$ such that $H=\varphi(H')+h$, then $h=\varphi(0)+h$ would be an element of $H$. However, $H$ contains no element of order $2$. So $H$ cannot be gotten from $H'$ by an automorphism of $A$ together with a translation by an element of order $2$ in $A$.\end{rem}

The next lemma shows the converse of Lemma \ref{lem:56}. For additional details of the proof, see Appendix A.
\begin{lem}\label{lem:a05} Let $A$ be a finite Abelian group generated by two elements and let $A'$ be a subgroup of $A$. If $H$ is a subset of $A'$ which is nonseparating for $A$, then $H$ is nonseparating for $A'$.
\end{lem}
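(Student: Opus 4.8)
The plan is to reduce Lemma~\ref{lem:a05} to a purely group-theoretic statement about two-generator Abelian groups and then establish that statement; full details would go to Appendix~A. The auxiliary fact I would isolate is the following.

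\medskip
\noindent\textbf{Claim.} \emph{If $A$ is a finite Abelian group generated by two elements, $A'\le A$, and $B'\le A'$ is a subgroup with $B'$ and $A'/B'$ both cyclic, then there is a subgroup $B\le A$ with $B$ cyclic, $A/B$ cyclic, and $B\cap A'=B'$.}

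\medskip

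Granting the Claim, the lemma follows quickly. Since $H$ is nonseparating for $A$ it decomposes as $H=H_1\coprod H_2\coprod H_3\coprod H_4$ with $H_i=\{\pm h_i\}$, and this decomposition also exhibits $H$ as an admissible subset of $A'$. Fix any cyclic $B'\le A'$ with $A'/B'$ cyclic and any generator $a'+B'$ of $A'/B'$; I must show that the coset numbers $c_1\le c_2\le c_3\le c_4$ of $H$ relative to $(B',a'+B')$ satisfy $c_2=c_3$. Choose $B\le A$ as in the Claim and put $N=|A/B|$, $\nu=|A'/B'|$. The inclusion $A'\hookrightarrow A$ induces an embedding $A'/B'\hookrightarrow A/B$, since its kernel is $(A'\cap B)/B'=B'/B'=0$; its image is the unique subgroup of order $\nu$ in the cyclic group $A/B$, so $\nu\mid N$, and I may pick a generator $a+B$ of $A/B$ with $(N/\nu)(a+B)$ equal to the image of $a'+B'$. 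A short computation then shows that for every $h\in H$ the coset number of $h$ relative to $(B,a+B)$ is exactly $N/\nu$ times its coset number relative to $(B',a'+B')$. Hence the coset numbers of $H$ relative to $(B,a+B)$ are $(N/\nu)c_1\le(N/\nu)c_2\le(N/\nu)c_3\le(N/\nu)c_4$; since $B$ is cyclic with $A/B$ cyclic and $H$ is nonseparating for $A$, we conclude $(N/\nu)c_2=(N/\nu)c_3$, hence $c_2=c_3$. As $(B',a'+B')$ was arbitrary, $H$ is nonseparating for $A'$.

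To prove the Claim I would first pass to $p$-primary components: each of the three conditions on $B$ holds if and only if it holds in every $p$-component, so the problem reduces to the case where $A$ is a $p$-group, $A\cong\Z/p^\alpha\Z\oplus\Z/p^\beta\Z$ with $0\le\alpha\le\beta$; if $\alpha=0$ then $A$ is cyclic and $B=B'$ works. The elementary observation is that a cyclic subgroup $\langle g\rangle\le A$ has cyclic quotient precisely when $g\notin pA$. If $A/B'$ is cyclic, take $B=B'$. Otherwise $B'\subseteq pA$ and $A/B'$ has rank $2$; writing $|B'|=p^b$, if $b\ge1$ I would produce $v\in A\setminus A'$ of order $p^{b+1}$ with $pv$ a generator of $B'$ --- such $v$ exists, because if every element with this property lay in $A'$ then the whole $p$-torsion subgroup of $A/B'$ would lie in $A'/B'$, making $A'/B'$ have rank $2$ and hence not cyclic, contrary to hypothesis --- and then, replacing $v$ by an element $w$ with $pw=v$ as long as one exists in $A$, I reach $u\notin pA$ with $B'\subseteq\langle u\rangle$; one checks that each of the cyclic subgroups arising meets $A'$ in exactly $B'$, so $B=\langle u\rangle$ is as required. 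If $b=0$, so $B'=0$, then $A'$ is cyclic, hence cannot contain the rank-$2$ subgroup of $A$ generated by the minimal subgroups of its two cyclic factors, and one of those factors can be taken as $B$.

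The main obstacle is the Claim itself, and within it the construction in the $p$-group case when $A/B'$ is not cyclic, where one must exhibit a cyclic subgroup of $A$ with cyclic quotient meeting $A'$ in exactly $B'$; this is the point that requires the explicit bookkeeping with cyclic subgroups of $\Z/p^\alpha\Z\oplus\Z/p^\beta\Z$ sketched above. By contrast, the reduction of the lemma to the Claim and the reduction of the Claim to $p$-groups are routine.
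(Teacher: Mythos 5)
Your argument is correct and follows the paper's proof of Lemma~\ref{lem:a05} essentially verbatim: your Claim is exactly Proposition~\ref{prop:a01}, your choice of generator $a+B$ with $(N/\nu)(a+B)=a'+B$ is Proposition~\ref{prop:a1}, and the scaling of coset numbers by $N/\nu$ is the same computation the paper performs. The only genuine divergence is in how you prove the Claim itself (which the paper relegates to Appendix~A): you reduce to $p$-primary components and, for $A\cong\Z/p^{\alpha}\Z\oplus\Z/p^{\beta}\Z$, build the cyclic subgroup $B$ by repeatedly dividing a generator of $B'$ by $p$ until you leave $pA$, checking at each step that the intersection with $A'$ stays equal to $B'$; the paper instead embeds $A$ into $\Z/n\Z\oplus\Z/n\Z$ and splits $A'$ into the parts generated by its cyclic and noncyclic Sylow subgroups, with explicit basis manipulations. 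Your route to the Claim is somewhat cleaner and more self-contained (the key observation that $A/\langle g\rangle$ is cyclic iff $g\notin pA$ does all the work), while the paper's version produces the subgroup $B$ more explicitly in coordinates; both are valid, and the details you leave as ``one checks'' (the inductive verification that $\langle w\rangle\cap A'=B'$ at each division step, and the existence of $v\notin A'$ with $pv$ generating $B'$) do go through.
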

\begin{proof}
Let $B'$ be a cyclic subgroup of $A'$ such that $A'/B'$ is cyclic. Let $a'$ be an element of $A'$ such that $a'+B'$ generates $A'/B'$. By Proposition \ref{prop:a01}, there exists $B$ a subgroup of $A$ such that  $A/B$ is cyclic and $A'\cap B=B'$. Let $n$ be the order of $A/B$. Let $m$ be the order of $a'+B\in A/B$. By Proposition \ref{prop:a1}, there exists an element $a$ in $A$ such that $a+B$ generates $A/B$ and  $(n/m)(a+B)=a'+B$. Let $0\leq c_{1}\leq c_{2}\leq c_{3}\leq c_{4}\leq(1/2)|A'/B'|$ be the coset numbers for $H$ relative to $B'$ and the generator $a'+B'$ of $A'/B'$. Since $a'\in A'$ and $ma'\in B$, $ma'\in B'$. So $|A'/B'|$ divides $m$. This yields, $0\leq nc_{1}/m\leq nc_{2}/m\leq nc_{3}/m\leq nc_{4}/m\leq n/2$. Since $c_{i}(a'+B')\subseteq c_{i}(a'+B)=(nc_{i}/m)(a+B)$, it follows that $nc_{1}/m, nc_{2}/m, nc_{3}/m, nc_{4}/m$ are the coset numbers for $H$ relative to $B$ and $a+B$. Hence $nc_{2}/m=nc_{3}/m$, and so $c_{2}=c_{3}$.\end{proof}

\begin{lem}\label{lem:a6} Let $a$ and $b$ be odd positive integers such that $a|b$ and $a>1$. Let $A=\Z/2\Z\oplus\Z/2\Z\oplus\Z/a\Z\oplus\Z/b\Z$ and let $\phi:A\to\Z/a\Z\oplus\Z/b\Z$ be the canonical projection.
Suppose that $A$ contains a nonseparating subset $H=\coprod_{i=1}^{4}\{\pm h_{i}\}$. Let $D=\{\phi(h_{i})\pm\phi(h_{j}): i,j\in\{1,2,3,4\}\ \text{with}\ \ i<j\}$. Assume that there exists a cyclic subgroup $G$ of $\Z/a\Z\oplus\Z/b\Z$ such that $G\cap D\subseteq\{0\}$ and $(\Z/a\Z\oplus\Z/b\Z)/G$ is cyclic. If $\Z/a\Z\oplus\Z/b\Z=\langle\phi(H)\rangle$, then we may assume that $\langle\phi(h_{1}),\phi(h_{2})\rangle=\Z/a\Z\oplus\Z/b\Z$ and  $h_{2}$, $h_{3}$ and $h_{4}$ all differ by an element of order 2.\end{lem}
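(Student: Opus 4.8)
The plan is to feed the nonseparating hypothesis a small, well-chosen family of cyclic test subgroups of $A$ built from $G$. Write $A=K\oplus(\Z/a\Z\oplus\Z/b\Z)$ with $K=\Z/2\Z\oplus\Z/2\Z=\ker\phi$, so $h_k=(\epsilon_k,\phi(h_k))$ with $\epsilon_k\in K$, and let $q\colon\Z/a\Z\oplus\Z/b\Z\to Q:=(\Z/a\Z\oplus\Z/b\Z)/G$ with $m=|Q|$. Here $Q$ is cyclic by hypothesis, $m$ is odd (it divides $ab$), and $Q\ne 0$ because $G$ is cyclic while $\Z/a\Z\oplus\Z/b\Z$ is not. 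Unwound, the conclusion amounts exactly to: three of the four pairs $\{\pm\phi(h_k)\}$ coincide. Given that, relabel so that $\{\pm\phi(h_2)\}=\{\pm\phi(h_3)\}=\{\pm\phi(h_4)\}$ and change the signs of $h_3,h_4$ so that $\phi(h_2)=\phi(h_3)=\phi(h_4)$; then $h_2-h_3$ and $h_2-h_4$ are nonzero elements of $K$ (the $\epsilon_k$ being pairwise distinct because the $H_k$ are disjoint), hence of order $2$, while $\langle\phi(h_1),\phi(h_2)\rangle=\langle\phi(H)\rangle=\Z/a\Z\oplus\Z/b\Z$.

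First I would record two facts. From $G\cap D\subseteq\{0\}$: for $i\ne j$, one has $q(\phi(h_i))\in\{\pm q(\phi(h_j))\}$ if and only if $\phi(h_i)\in\{\pm\phi(h_j)\}$, since the forward implication puts $\phi(h_i)\mp\phi(h_j)$ --- an element of $D$ up to sign --- into $G$, hence into $G\cap D=\{0\}$. From the disjointness of $H_1,\dots,H_4$: if $i\ne j$ and $\phi(h_i)\in\{\pm\phi(h_j)\}$, then $\epsilon_i\ne\epsilon_j$, because otherwise (using $-\epsilon_i=\epsilon_i$ in $K$) the pairs $\{\pm h_i\}$ and $\{\pm h_j\}$ would coincide.

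Next, for each of the three nonzero homomorphisms $\lambda\colon K\to\Z/2\Z$ I would introduce the subgroup $B_\lambda=\ker\lambda\oplus G\le A$. It is cyclic of order $2|G|$, and $A/B_\lambda\cong\Z/2\Z\oplus Q$ is cyclic since $m$ is odd; fixing a generator $g$ of $Q$, the element $(1,g)$ generates $A/B_\lambda$. A direct computation then identifies the coset number $c_k$ of $h_k$ relative to $B_\lambda$ and $(1,g)$: if $r_k$ is the coset number of $q(\phi(h_k))$ in $Q$ relative to $g$, then $c_k$ is the unique element of $\{r_k,\,m-r_k\}$ congruent to $\lambda(\epsilon_k)$ modulo $2$ (the two candidates have opposite parities because $m$ is odd). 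Together with the first fact this yields: $r_i=r_j$ iff $\phi(h_i)\in\{\pm\phi(h_j)\}$; if $r_i\ne r_j$ then $\{r_i,m-r_i\}$ and $\{r_j,m-r_j\}$ are disjoint, so $c_i\ne c_j$; and if $r_i=r_j$ then $c_i=c_j$ iff $\lambda(\epsilon_i)=\lambda(\epsilon_j)$.

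Finally I would argue by contradiction: suppose no three of the pairs $\{\pm\phi(h_k)\}$ coincide. Since $\Z/a\Z\oplus\Z/b\Z=\langle\phi(H)\rangle$ is not cyclic they are also not all equal, so the coincidence pattern of the pairs --- equivalently, by the above, that of $(r_1,r_2,r_3,r_4)$ --- is one of: all distinct; exactly one coincident pair; two disjoint coincident pairs. In each case I would exhibit a $\lambda$ making $c_1,c_2,c_3,c_4$ pairwise distinct: any $\lambda$ works when all $r_k$ are distinct; when $r_1=r_2$ is the only coincidence, pick $\lambda$ with $\lambda(\epsilon_1)\ne\lambda(\epsilon_2)$, which exists by the second fact; when $r_1=r_2$ and $r_3=r_4$, pick a nonzero $\lambda$ not vanishing on either of the nonzero elements $\epsilon_1-\epsilon_2$ and $\epsilon_3-\epsilon_4$ of $K$, which exists by a short check among the three nonzero functionals on $K$. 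But $B_\lambda$ is cyclic with cyclic quotient, so the nonseparating hypothesis forces $c_2=c_3$ after sorting the $c_k$, contradicting pairwise distinctness. Hence three of the pairs coincide, and the lemma follows as in the first paragraph. The coset-number computation is routine bookkeeping; the substantive points are the choice of the subgroups $B_\lambda$ and the observation (second fact) that disjointness of $H$ forces distinct $\epsilon$'s, and the one place needing care is the three-case production of a suitable $\lambda$.
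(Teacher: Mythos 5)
Your proof is correct and rests on the same key construction as the paper's: your three test subgroups $B_\lambda=\ker\lambda\oplus G$ are exactly the paper's $E_{(1,0)}$, $E_{(0,1)}$, $E_{(1,1)}$ (the kernels of the three nonzero functionals on $\Z/2\Z\oplus\Z/2\Z$ are precisely the three order-2 subgroups), and both arguments use $G\cap D\subseteq\{0\}$ to convert coincidences of middle cosets into coincidences among the $\pm\phi(h_i)$, and disjointness of the $H_i$ to separate the $2$-torsion components. The difference is only organizational -- you derive an explicit parity-plus-$r_k$ formula for the coset numbers and exclude the possible coincidence patterns by contradiction, whereas the paper eliminates the candidate middle-coset pairs one at a time -- so this is essentially the paper's proof.
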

\begin{proof} Define the following three cyclic subgroups of $A$:
 \begin{itemize}
 \item $E_{(1,0)}=\langle (1,0)\rangle\oplus G$
 \item $E_{(0,1)}=\langle (0,1)\rangle\oplus G$
 \item $E_{(1,1)}=\langle (1,1)\rangle\oplus G$
 \end{itemize}
 
The groups $A/E_{(1,0)}$, $A/E_{(0,1)}$ and $A/E_{(1,1)}$ are all cyclic. For each subgroup $B$ of $A$ such that $A/B$ is cyclic, we denote the second coset number for $H$ relative to $B$ and the generator $a+B$ by $c_{2}$. 

Let $w+E_{(1,0)}$ be a generator of $A/E_{(1,0)}$. Since $H$ is a nonseparating subset of $A$, without loss of generality we may assume that $h_{2}$ and $h_{3}$ are elements of $c_{2}w+E_{(1,0)}$. Then $\phi(h_{2})-\phi(h_{3})\in G$. So, $\phi(h_{2})=\phi(h_{3})$ and $h_{2}-h_{3}=(1,0,0,0)$. 

Let $x+E_{(0,1)}$ be a generator of $A/E_{(0,1)}$. We show that $\{\pm h_{2},\pm h_{3}\}\subset c_{2}x+E_{(0,1)}$ cannot occur. Proceed by contradiction.  If $\{h_{2},h_{3}\}\subset c_{2}x+E_{(0,1)}$ or $\{-h_{2},-h_{3}\}\subset c_{2}x+E_{(0,1)}$, then $h_{2}-h_{3}=(0,1,0,0)$ which is impossible. If $\{h_{2},-h_{3}\}\subset c_{2}x+E_{(0,1)}$ or $\{-h_{2},h_{3}\}\subset c_{2}x+E_{(0,1)}$, then $h_{2}+h_{3}=(0,1,0,0)$ and so $2h_{2}=(1,1,0,0)$, which yields a contradiction. Similarly, if $y+E_{(1,1)}$ is a generator of $A/E_{(1,1)}$, then $\{\pm h_{2},\pm h_{3}\}\subset c_{2}y+E_{(1,1)}$ cannot occur.
 
We now show that $\{\pm h_{1},\pm h_{4}\}\subset c_{2}x+E_{(0,1)}$ cannot occur. Relabeling, if necessary, it suffices to show that $\{h_{1}, h_{4}\}\subset c_{2}x+E_{(0,1)}$ cannot occur. Proceed by contradiction. Suppose that $\{h_{1},h_{4}\}\subset c_{2}x+E_{(0,1)}$  then $\phi(h_{1})=\phi(h_{4})$ and $h_{1}-h_{4}=(0,1,0,0)$. Let $y+E_{(1,1)}$ be a generator of  $A/E_{(1,1)}$. Then one of the following sixteen inclusions must hold.
\begin{itemize}
\item $\{\pm h_{1},\pm h_{2}\}\subset c_{2}y+E_{(1,1)}$
\item $\{\pm h_{1},\pm h_{3}\}\subset c_{2}y+E_{(1,1)}$
\item $\{\pm h_{4},\pm h_{2}\}\subset c_{2}y+E_{(1,1)}$
\item $\{\pm h_{4},\pm h_{3}\}\subset c_{2}y+E_{(1,1)}$
\end{itemize}
However, each of them would imply that $\langle\phi(H)\rangle=\langle\phi(h_{1})\rangle$. Since $\Z/a\Z\oplus\Z/b\Z=\langle\phi(H)\rangle$, none of these inclusions occur. So, $\{\pm h_{1},\pm h_{4}\}\subset c_{2}x+E_{(0,1)}$ cannot occur. Similarly, if $z+E_{(1,1)}$ is a generator of $A/E_{(1,1)}$, then $\{\pm h_{1},\pm h_{4}\}\subset c_{2}z+E_{(1,1)}$ cannot occur either.
 
Now, we may assume that either $\{h_{3},h_{4}\}$ or $\{-h_{3},h_{4}\}$ is a subset of $c_{2}x+E_{(0,1)}$. If $\{h_{3},h_{4}\}\subset c_{2}x+E_{(0,1)}$, then $h_{3}-h_{4}=(0,1,0,0)$. Hence $\phi(h_{2})=\phi(h_{3})=\phi(h_{4})$ and the lemma follows. If $\{-h_{3},h_{4}\}\subset c_{2}x+E_{(0,1)}$, then $h_{3}+h_{4}=(0,1,0,0)$. In this case, $\phi(h_{2})=\phi(h_{3})=-\phi(h_{4})$. Relabeling $h_{4}$ by $-h_{4}$, the lemma follows.
\end{proof}

\section{NET Maps: Preliminaries and Examples}
In this section we review briefly some definitions and properties of NET maps. We refer the reader to Section 1 in \cite{CFPP} for more details.
\begin{defn} A Thurston map $f:S^2\to S^2$ is called $Euclidean$ if its local degree at each of its critical points is $2$, it has at most four postcritical points and none of them is critical.
\end{defn}
\begin{defn} A Thurston map $f:S^2\to S^2$ is called $nearly$ $Euclidean$ (NET) if its local degree at each of its critical points is $2$ and it has exactly four postcritical points. 
\end{defn}

From Lemma 1.3 of \cite{CFPP}, it follows that every Euclidean Thurston map is nearly Euclidean, and every NET map $f$ has the property that $f^{-1}(P_{f})$ contains exactly four points which are not critical points; $f$ is Euclidean if and only if these four points are precisely the points of $P_{f}$. The next theorem shows that NET maps lift to maps of tori. The proof of the theorem and the following description can be found in Section 1 of \cite{CFPP}.

\begin{thm}\label{teo:31} Let $f$ be a Thurston map. Then $f$ is nearly Euclidean if and only if there exist branched covering maps $p_{1}:T_{1}\to S^2$ and $p_{2}:T_{2}\to S^2$ with degree 2 from the tori $T_{1}$ and $T_{2}$ to $S^2$ such that the set of branch of $p_{2} $ is $P_{f}$ and there exists a continuous map $\tilde{f}: T_{1}\to T_{2}$ such that $p_{2}\circ\tilde{f}=f\circ p_{1}$. If $f$ is nearly Euclidean, then $f$ is Euclidean if and only if the set of branched points of $p_{1}$ is $P_{f}$. 
\end{thm}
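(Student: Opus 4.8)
The plan is to exploit the classical description of degree-two branched covers of $S^2$ by a torus: such a cover is the quotient of a torus by its (hyper)elliptic involution $z\mapsto -z$, and its branch locus is the set of four images of the $2$-torsion points. For a purely topological argument it is more convenient to recall that a connected double cover of $S^2$ branched over a finite set $Z$ is classified by a surjection $\pi_1(S^2\setminus Z)\to\Z/2\Z$ sending every puncture loop to the nontrivial element (this is consistent because the puncture loops multiply to $1$ and their images sum to $|Z|\equiv 0$ whenever $|Z|$ is even), and that by Riemann--Hurwitz its total space has Euler characteristic $4-|Z|$; in particular it is a torus exactly when $|Z|=4$.

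For the forward implication, assume $f$ is NET. First I would take $p_{2}\colon T_{2}\to S^2$ to be the double cover branched over $P_{f}$, which is a torus since $|P_{f}|=4$. Then I would obtain $p_{1}$ by pulling back the classifying data: let $\rho_{2}\colon\pi_1(S^2\setminus P_{f})\to\Z/2\Z$ classify $p_{2}$ and set $\rho_{1}=\rho_{2}\circ f_{*}\colon\pi_1(S^2\setminus f^{-1}(P_{f}))\to\Z/2\Z$. The crucial local computation is that a small loop $\delta_{z}$ around a point $z\in f^{-1}(P_{f})$ is pushed by $f_{*}$ to $\deg_{z}(f)$ times a loop around $f(z)\in P_{f}$, so $\rho_{1}(\delta_{z})\equiv\deg_{z}(f)\pmod 2$; since $f$ is NET this is nontrivial exactly at the non-critical points of $f^{-1}(P_{f})$, of which there are exactly four (recalled above from Lemma~1.3 of \cite{CFPP}). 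Hence the completed double cover $p_{1}\colon T_{1}\to S^2$ is branched over precisely those four points and $T_{1}$ is a torus, and because $\rho_{1}$ factors through $f_{*}$ the covering $T_{1}^{\circ}\to S^2\setminus f^{-1}(P_{f})$ maps over $f$ to $T_{2}^{\circ}\to S^2\setminus P_{f}$; this extends continuously over the punctures to a map $\tilde f\colon T_{1}\to T_{2}$ with $p_{2}\circ\tilde f=f\circ p_{1}$. Finally, the branch set of $p_{1}$ is the set of non-critical points of $f^{-1}(P_{f})$; since $P_{f}$ is forward invariant we have $P_{f}\subseteq f^{-1}(P_{f})$, and both the branch set and $P_{f}$ have four elements, so the branch set equals $P_{f}$ if and only if no point of $P_{f}$ is critical, i.e.\ if and only if $f$ is Euclidean.

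For the converse, suppose $p_{1}$, $p_{2}$, $\tilde f$ as in the statement are given, with $f$ a Thurston map. Since $p_{2}$ is a degree-two branched cover of $S^2$ by a torus with branch set $P_{f}$, Riemann--Hurwitz forces $|P_{f}|=4$. Comparing topological degrees in $p_{2}\circ\tilde f=f\circ p_{1}$ gives $\deg\tilde f=\deg f$, and the equation itself forces $\tilde f$ to be a branched cover (in suitable local coordinates $p_{2}$ is $w\mapsto w^{2}$ while $f\circ p_{1}$ is $z\mapsto z^{m}\cdot(\mathrm{unit})$, so $m$ must be even and $\tilde f$ is $z\mapsto z^{m/2}\cdot(\mathrm{unit})$); then Riemann--Hurwitz for $\tilde f\colon T_{1}\to T_{2}$ reads $0=-\sum_{\tilde x}(\deg_{\tilde x}\tilde f-1)$, so $\tilde f$ is an unramified covering. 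Now let $y$ be any critical point of $f$ and $v=f(y)\in P_{f}$; as $v$ is a branch point of $p_{2}$ it has a unique preimage $\tilde v$ with $\deg_{\tilde v}p_{2}=2$, so for $\tilde y\in p_{1}^{-1}(y)$ we have $\tilde f(\tilde y)=\tilde v$, and comparing local degrees at $\tilde y$ gives $2=\deg_{\tilde v}(p_{2})\,\deg_{\tilde y}(\tilde f)=\deg_{y}(f)\,\deg_{\tilde y}(p_{1})$; since $\deg_{y}(f)\ge 2$ this forces $\deg_{\tilde y}(p_{1})=1$ and $\deg_{y}(f)=2$. Hence every critical point of $f$ has local degree $2$ and $f$ is NET. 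The same local-degree comparison identifies the branch points of $p_{1}$ with the non-critical points of $f^{-1}(P_{f})$, and then, exactly as above, the branch set of $p_{1}$ equals $P_{f}$ precisely when $f$ is Euclidean.

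The step I expect to be the main obstacle is the forward direction's translation between the $\Z/2\Z$-homomorphism picture of the double covers and their completions as branched covers of the sphere: one must verify that $\rho_{1}=\rho_{2}\circ f_{*}$ ramifies over exactly the right four points — the parity identity $\rho_{1}(\delta_{z})\equiv\deg_{z}(f)\pmod 2$ is where the defining property of NET maps (every critical local degree equal to $2$) enters essentially — and that the resulting connected cover is genuinely a torus rather than a surface of higher genus or a disconnected cover. By comparison, the degree and ramification bookkeeping for $\tilde f$ in the converse is routine once one knows $\tilde f$ is forced to be a covering map.
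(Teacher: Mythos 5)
Your argument is correct. Note that the paper does not actually prove this theorem: it explicitly defers both the proof and the surrounding description to Section~1 of \cite{CFPP}, so there is no in-text proof to match against. Your route — classifying the two double covers by surjections $\pi_1\to\Z/2\Z$, computing $\rho_1=\rho_2\circ f_*$ on puncture loops via the parity of local degrees, and invoking the fact that $f^{-1}(P_f)$ has exactly four non-critical points — is a monodromy-theoretic packaging of the same construction that \cite{CFPP} carries out more geometrically, where the tori arise as $\R^2/2\Lambda_j$ and the spheres as quotients of $\R^2$ by groups generated by order-$2$ rotations about lattice points; your version has the advantage of staying purely in the topological category and not presupposing the Euclidean structure, while the lattice picture is what the rest of the paper actually uses downstream. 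The bookkeeping in both directions checks out: the count of four non-critical preimages (which you correctly attribute to Lemma~1.3 of \cite{CFPP}, and which follows from $4d-2(2d-2)=4$), the surjectivity of $\rho_1$ guaranteeing connectedness of $T_1$, the local-degree identity $\deg_{\tilde v}(p_2)\deg_{\tilde y}(\tilde f)=\deg_y(f)\deg_{\tilde y}(p_1)$ forcing $\deg_y(f)=2$, and the identification of the branch set of $p_1$ with the non-critical points of $f^{-1}(P_f)$, which yields the Euclidean criterion since $P_f\subseteq f^{-1}(P_f)$ and both sets have four elements. The only step I would tighten is the claim that a continuous $\tilde f$ satisfying $p_2\circ\tilde f=f\circ p_1$ is automatically a branched covering: you should first restrict to the complement of $P_f\cup f(Z_1)$ (where $Z_1$ is the a priori unknown branch set of $p_1$) to get an honest covering-space factorization before running the local $z\mapsto z^{m/2}$ analysis at the remaining finitely many points; as written the local-coordinate sketch is standard and correct, just compressed.
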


Let $f:S^2\to S^2$ be a NET map. Let $p_{1}:T_{1}\to S^2$, $p_{2}:T_{2}\to S^2$ and $\tilde{f}: T_{1}\to T_{2}$ as in Theorem \ref{teo:31} such that $p_{2}\circ\tilde{f}=f\circ p_{1}$. For $j\in\{1,2\}$, let $P_{j}(f)\subset S^2$ be the set of branched points of $p_{j}$  and let $q_{j}:\R^2\to T_{j}$ be a universal covering map. The map $p_{j}\circ q_{j}:\R^2\to S^2$ is a regular branched covering map whose local degree at every ramification point is $2$. Let $\Gamma_{j}$ and $\Lambda_{j}$ be the set of deck transformations and the set of ramification points of $p_{j}\circ q_{j}$. We can choose $q_{j}$ so that $\Gamma_{j}$ is generated by the set of all Euclidean rotations of order $2$ about the points of $\Lambda_{j}$. We may, and do, normalize so that $0\in\Lambda_{j}$. Hence $\Lambda_{j}$ is a lattice in $\R^2$ and the elements of $\Gamma_{j}$ are the maps of the form $x\mapsto2\lambda\pm x$ for some $\lambda\in\Lambda_{j}$. 

The map $\tilde{f}\circ q_{1}$ lifts to a continuous map $\hat{f}:\R^2\to\R^2$ such that $q_{2}\circ\hat{f}=\tilde{f}\circ q_{1}$. Since $\tilde{f}\circ q_{1}$ is a covering map, the map $\hat{f}$ also is. Hence, $\hat{f}$ is a homeomorphism. We replace $q_{1}$  by $q_{1}\circ\hat{f}^{-1}$. In this case, $\tilde{f}$ lifts to the identity map. Thus, $\Lambda_{1}\subseteq\Lambda_{2}$ and $\Gamma_{1}\subseteq\Gamma_{2}$. So we obtain the standard commutative diagram
$$\begin{CD}
\Lambda_{1}@>{i_{c}}>>\Lambda_{2}\\
{i_{c}}@VVV      @VVV{i_{c}}\\
\R^2@>id>>\R^2\\
{q_{1}}@VVV      @VVV{q_{2}}\\
T_{1}@>{\tilde{f}}>>T_{2}\\
{p_{1}}@VVV      @VVV{p_{2}}\\
S^2@>f>>S^2
\end{CD}$$
where $id:\R^2\to\R^2$ is the identity map and the maps from $\Lambda_{1}$ to $\Lambda_{2}$ are inclusion maps.

The group $\Gamma_{j}$ contains the group of deck transformations of $q_{j}$. It is the subgroup with index 2 consisting of translations of the form $x\mapsto 2\lambda+x$ with $\lambda\in\Lambda_{j}$. So we can identify $T_{j}$ with $\R^2/2\Lambda_{j}$. The standard commutative diagram implies that $\R^2/\Gamma_{1}$ and $\R^2/\Gamma_{2}$ are both $S^2$ identified with $S^{2}$. Thus there is an identification map $\phi:\R^2/\Gamma_{2}\to\R^2/\Gamma_{1}$. To evaluate $f$ at some point $x$, we view $x$ as an element of $\R^2/\Gamma_{1}$. We lift it to $\R^2/\Gamma_{2}$ and then apply the identification map $\phi$ to obtain $f(x)$. For Euclidean NET maps the identification map $\phi$ can be obtained by using $\Phi:\R^2\to\R^2$ an affine automorphism such that $\Phi(\Lambda_{2})=\Lambda_{1}$.

The following theorem shows that a NET map $f$ can be obtained by taking a Euclidean Thurston map $g$ and post-composing it by a homeomorphism $h$ that satisfies $h(P_{g})\subseteq g^{-1}(P_{g})$, subject to the constraint that the composition $f=h\circ g$ has four postcritical points. More precisely,

\begin{thm}\label{teo:34} (1) If $g:S^{2}\to S^{2}$ is a NET map and $h:S^{2}\to S^{2}$ is an orientation-preserving homeomorphism such that $h(P_{g})\subseteq g^{-1}(P_{g})$, then $f=h\circ g$ is a NET map if it has at least four post critical points.\\
(2) Let $f$ be a NET map with $P_{1}=P_{1}(f)$ and $P_{2}=P_{2}(f)$. Let $h:S^{2}\to S^{2}$ orientation-preserving homeomorphism with $h(P_{1})=P_{2}$. Then $f=h\circ g$, where $g:S^2\to S^2$ be a Euclidean Thurston map with $P_{g}=P_{1}$ and $P_{2}\subseteq g^{-1}(P_{g})$, so that $h(P_{g})\subseteq g^{-1}(P_{g})$. 
\end{thm}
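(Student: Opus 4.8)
The plan is to prove both statements by direct bookkeeping of postcritical sets, the elementary engine being that a homeomorphism has no critical points, so composing a branched cover with a homeomorphism changes neither the critical set nor the local degrees. Beyond this I will invoke two facts about NET maps from Section 1 of \cite{CFPP}: (a) for a NET map $f$ the branch set $P_{1}(f)$ of the torus cover $p_{1}$ is exactly the set of four non-critical points of $f^{-1}(P_{f})$, so $P_{1}(f)\cap\Omega_{f}=\emptyset$ and $f(P_{1}(f))\subseteq P_{f}$; and (b) Lemma 1.3 of \cite{CFPP}, that every Euclidean Thurston map is nearly Euclidean and hence has exactly four postcritical points.

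For part (1) I would argue as follows. Writing $f=h\circ g$, the map $f$ is a branched cover with $\Omega_{f}=\Omega_{g}$ and local degree $2$ at each critical point, so only $|P_{f}|=4$ needs checking. Since $g$ is a Thurston map, $g(\Omega_{g})\subseteq P_{g}$, whence $f(\Omega_{f})=h(g(\Omega_{g}))\subseteq h(P_{g})$. The hypothesis $h(P_{g})\subseteq g^{-1}(P_{g})$ gives $f(h(P_{g}))=h\bigl(g(h(P_{g}))\bigr)\subseteq h\bigl(g(g^{-1}(P_{g}))\bigr)=h(P_{g})$, so $h(P_{g})$ is forward invariant under $f$; as it also contains $f(\Omega_{f})$, it contains $P_{f}=\bigcup_{n\geq1}f^{\circ n}(\Omega_{f})$. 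Therefore $|P_{f}|\leq|h(P_{g})|=|P_{g}|=4$, and with the standing hypothesis $|P_{f}|\geq4$ we conclude $|P_{f}|=4$, so $f$ is a NET map.

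For part (2) I would set $g:=h^{-1}\circ f$ (forced by $f=h\circ g$); then $g$ is an orientation-preserving branched cover with $\Omega_{g}=\Omega_{f}$ and local degree $2$ at each critical point. By fact (a), $f(P_{1})\subseteq P_{f}=P_{2}$, so $g(P_{1})=h^{-1}(f(P_{1}))\subseteq h^{-1}(P_{2})=P_{1}$, and also $g(\Omega_{g})=h^{-1}(f(\Omega_{f}))\subseteq h^{-1}(P_{f})=P_{1}$ since $f(\Omega_{f})\subseteq P_{f}$. Thus $P_{1}$ is forward invariant under $g$ and contains the critical-value set of $g$, so $P_{g}\subseteq P_{1}$, a four-point set disjoint from $\Omega_{g}=\Omega_{f}$; hence $g$ is a Thurston map with at most four postcritical points, none of them critical, i.e.\ a Euclidean Thurston map. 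By fact (b), $g$ then has exactly four postcritical points, forcing $P_{g}=P_{1}$. Finally $g^{-1}(P_{g})=g^{-1}(P_{1})=f^{-1}(h(P_{1}))=f^{-1}(P_{2})=f^{-1}(P_{f})\supseteq P_{f}=P_{2}$ by forward invariance of $P_{f}$, which gives $P_{2}\subseteq g^{-1}(P_{g})$ and $h(P_{g})=h(P_{1})=P_{2}\subseteq g^{-1}(P_{g})$; this also exhibits $f$ in the shape produced by part (1).

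The step I expect to be the crux is the identification of the four-point set $P_{1}$ in part (2): that $P_{1}$ is disjoint from $\Omega_{g}$ and forward invariant under $g$ (which yields $P_{g}\subseteq P_{1}$), and that a Euclidean Thurston map necessarily has four postcritical points (which upgrades this to $P_{g}=P_{1}$, not a proper subset). This is where the argument must lean on the structure theory of \cite{CFPP} rather than on bare set manipulation. The same content can be repackaged through the torus lifts---applying $h^{-1}$ to $p_{2}\circ\tilde f=f\circ p_{1}$ yields $(h^{-1}\circ p_{2})\circ\tilde f=g\circ p_{1}$, which displays $g$ in the shape of Theorem \ref{teo:31} with degree-$2$ branched covers $p_{1}$ and $h^{-1}\circ p_{2}$ sharing the branch set $P_{1}$---but this repackaging still needs the same input to know $|P_{g}|=4$, so the genuine content is that structural picture and the rest is the chase carried out above.
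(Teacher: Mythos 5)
Your argument is correct: part (1) is the standard forward-invariance bookkeeping showing $P_{f}\subseteq h(P_{g})$, and part (2) correctly identifies $g=h^{-1}\circ f$ as Euclidean via the facts that $P_{1}(f)$ consists of the four non-critical points of $f^{-1}(P_{f})$ and that Euclidean maps have exactly four postcritical points (Lemma 1.3 of \cite{CFPP}). The paper itself states this theorem without proof, deferring to Section 1 of \cite{CFPP}, and your argument is essentially the one given there.
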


Combining Theorem \ref{teo:31}, the description of the standard commutative diagram given above and Theorem \ref{teo:34}, each NET map can be constructed as follows. Let $\Lambda_{2}$ be the lattice generated by $(1,0)$ and $(0,1)$. Fix a quintuple $(\Lambda_{1}, \Phi, R, h_{R}, h)$, where 
\begin{itemize}
\item $\Lambda_{1}$ is a sublattice of $\Lambda_{2}$ of covolume greater than one. Let $\Gamma_{1}, \Gamma_{2}$ be the groups generated by rotations of order 2 about elements of $\Lambda_{1}$, $\Lambda_{2}$ so that $S^{2}_{1}:=\R^{2}/\Gamma_{1}$, $S^{2}_{2}:=\R^{2}/\Gamma_{2}$ are spheres. Since $\Lambda_{1}<\Lambda_{2}$, we have $\Gamma_{1}<\Gamma_{2}$. So the identity map 
$i:\R^{2}\to\R^{2}$ induces a quotient map $\overline{i}:S^{2}_{1}\to S^{2}_{2}$. This quotient map is a branched covering map of degree $d:=[\Lambda_{2}:\Lambda_{1}]$.
\item For $j\in\{1,2\}$, set $T_{j}:=\R^{2}/2\Lambda_{i}$ and let $p_{j}:T_{j}\to S^{2}_{j}$ be branched covering maps of degree 2. Let $P_{j}=\Lambda_{j}/\Gamma_{j}$ be the set of branched points of $p_{j}$. Note that $|P_{j}|=4$.
\item $\Phi:(\R^2,\Lambda_{2})\to(\R^2,\Lambda_{1})$ is an affine map of the form $\Phi(x)=Lx+b$ where $b\in\Lambda_{1}$ and $L$ is a $2\times2$ matrix over $\Z$ of determinant greater than one.
\item $R\subset\Lambda_{2}/\Gamma_{1}\subset S^{2}_{1}$ is a set of four points.
\item $h_{R}:P_{1}\to R$ is a bijection.
\item $h:(S^{2}_{1},P_{1})\to(S^{2}_{1}, R)$ is an orientation-preserving homeomorphism which is an extension of $h_{R}$.
\end{itemize}
$$
\xymatrix{
\R^{2} \ar[d]_{/\Gamma_{1}} \ar[r]^{i} &\R^{2}\ar[d]_{/\Gamma_{2}}  \ar[r]^{\Phi} &\R^{2} \ar[d] \ar[d]_{/\Gamma_{1}}\\
S^{2}_{1} \ar[r]^{\overline{i}} & S^{2}_{2}\ar[r]^{\phi} &S^{2}_{1} \ar[r]^{h} &S^{2}_{1} }
$$
The affine map $\Phi$ descends to a homeomorphism $\phi:S^{2}_{2}\to S^{2}_{1}$, so that the composition $g=\phi\circ\overline{i}$
is a Euclidean Thurston map such that $P_{g}=P_{1}$. Since $h(P_{g})=h(P_{1})=R$, it turns out that $f:=h\circ g$ is a Thurston map such that $P_{f}\subset R$. If $V_{g}=P_{1}$ (which is always true if $\deg(g)=3$ or $\deg(g)\geq5$), then $P_{f}=R$ and we get an NET map by this process. Thus, given a nearly Euclidean Thurston map $f$, we can always associate to $f$ a quintuple $(\Lambda_{1}, \Phi, R, h_{R}, h)$ and a commutative diagram as above where $f=h\circ g=h\circ(\phi\circ \overline{i})$. 

Under these settings, in \cite{CFPP} J. Cannon et al. proved the following result.

\begin{thm}\label{thm:55} Let $f$ be a NET and let $p_{1}, \Lambda_{1}, \Lambda_{2}$ as above. Then the Teichm\"uller map of $f$ is constant if and only if $p_{1}^{-1}(P_{f})$ is a nonseparating subset of $\Lambda_{2}/2\Lambda_{1}$.
\end{thm}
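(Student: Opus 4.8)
The plan is to combine the standard torus model of a NET map with the general principle that a holomorphic self-map of the one‑dimensional Teichm\"uller space of a four‑punctured sphere is constant exactly when it contracts every curve to something inessential. First I would fix coordinates. Since $\mathcal{O}_f$ is the Euclidean $(2,2,2,2)$‑orbifold, $\text{Teich}(\mathcal{O}_f)=\text{Teich}(S^2,P_f)$ is one dimensional; the degree‑two branched cover $p_2\colon T_2\to S^2$, whose branch set is $P_f$ by Theorem \ref{teo:31}, lifts any complex structure on $(S^2,P_f)$ to one on the torus $T_2$ and thereby identifies $\text{Teich}(S^2,P_f)$ with $\text{Teich}(T_2)\cong\H$, the modulus of $T_2$ serving as the coordinate. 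I would then invoke, in the form needed for the orbifold $\mathcal{O}_f$, the following criterion from Thurston's theory for a four‑punctured sphere: $\Sigma_f\colon\H\to\H$ is constant if and only if, for every essential simple closed curve $\gamma$ in $(S^2,P_f)$, no component of $f^{-1}(\gamma)$ is essential in $(S^2,P_f)$. Indeed, if some component $\gamma'$ of $f^{-1}(\gamma)$ is essential, the extremal length of $\gamma'$ in $\Sigma_f(\tau)$ is comparable to that of $\gamma$ in $\tau$, so $\Sigma_f(\tau)$ leaves every compact set as $\tau$ tends to the end of $\H$ determined by $\gamma$; conversely, if $\Sigma_f$ approaches no end, the holomorphic map it induces from the compactified quotient (a compact Riemann surface) to $\P^1=\overline{\mathcal{M}}_{0,4}$ omits $\{0,1,\infty\}$ and is therefore constant.

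Next I would make the curve condition explicit on the tori. An essential curve $\gamma$ on $(S^2,P_f)$ separates $P_f$ into two pairs, so it lifts through $p_2$ to a loop on $T_2$ of some primitive slope $\mathbf d\in 2\Lambda_2$. Using $f\circ p_1=p_2\circ\tilde f$ and the normalization of the standard commutative diagram (where $\tilde f$ is the affine covering induced by the inclusion $2\Lambda_1\subseteq 2\Lambda_2$), $f^{-1}(\gamma)$ lifts through $p_1$ to the family of parallel loops on $T_1$ in the direction of $\mathbf d'$, where $\mathbf d'$ spans $2\Lambda_1\cap\R\mathbf d$; their images in the pillowcase $T_1/\iota$, with $\iota(x)=-x$, are precisely the curves of ``$\mathbf d'$-slope''. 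Such a curve is essential in $(S^2,P_f)$ exactly when it separates $P_f=p_1^{-1}(P_f)/\iota$ into two pairs. Writing $p_1^{-1}(P_f)=\coprod_{i=1}^{4}\{\pm h_i\}\subseteq A:=\Lambda_2/2\Lambda_1$ — for which $A/2A\cong\Lambda_2/2\Lambda_2\cong\Z/2\Z\oplus\Z/2\Z$, exactly the hypothesis of the Main Theorem — the positions of the four points $\overline{h_i}$ transverse to the $\mathbf d'$-direction of the pillowcase are, by construction, the coset numbers $c_1\le c_2\le c_3\le c_4$ for this $H$ relative to the cyclic subgroup $B:=(\R\mathbf d\cap\Lambda_2+2\Lambda_1)/2\Lambda_1$, for which one checks that $B$ and $A/B$ are both cyclic. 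A short analysis of which of the four points a loop of a given transverse height separates — taking account of the two folds of the pillowcase — then shows that a curve of that slope separating $P_f$ into two pairs exists if and only if $c_2\neq c_3$.

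Finally, using a sublattice/subgroup dictionary of the same kind as Proposition \ref{prop:a01}, I would check that as $\gamma$ (equivalently the primitive direction $\mathbf d$) ranges over all essential curves, the associated $B$ ranges over all cyclic subgroups of $A$ with cyclic quotient. Assembling the equivalences: $\Sigma_f$ is constant $\iff$ no essential $\gamma$ has an essential preimage component $\iff$ $c_2=c_3$ for every cyclic subgroup $B\le A$ with $A/B$ cyclic $\iff$ $p_1^{-1}(P_f)$ is a nonseparating subset of $\Lambda_2/2\Lambda_1$.

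I expect the main obstacle to be the translation in the second paragraph: tracking exactly how $f^{-1}(\gamma)$ breaks up on $T_1$ and descends to the pillowcase — in particular verifying that the components surrounding the four non‑critical points of $f^{-1}(P_f)\setminus P_f$ are peripheral and hence drop out — and then matching the ``which point on which side'' bookkeeping precisely to the ordering $c_1\le c_2\le c_3\le c_4$, including the degenerate cases in which some $h_i$ is $2$‑torsion or lies on a fold of the pillowcase. Stating the constancy criterion of the first paragraph in exactly the right form for the orbifold $\mathcal{O}_f$, rather than just for the punctured sphere, also requires care.
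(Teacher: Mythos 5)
The paper does not actually prove Theorem \ref{thm:55} --- it is quoted from Theorem 10.2 of \cite{CFPP} --- and your outline reproduces the argument given there: reduce constancy of $\Sigma_f$ to the curve--preimage criterion of \cite{BEKP} (the same criterion this paper invokes at the start of Section 5), then translate essential curves into slopes, slopes into cyclic subgroups $B\le\Lambda_2/2\Lambda_1$ with cyclic quotient, and the transverse positions of the points of $p_1^{-1}(P_f)$ into the coset numbers, so that an essential preimage component exists exactly when $c_2\neq c_3$. The two points you flag as delicate --- checking that the components encircling the non-postcritical points of $f^{-1}(P_f)$ are peripheral and drop out, and verifying that every cyclic $B$ with $A/B$ cyclic is realized by a slope (a primitivity adjustment of the kind handled by the Appendix propositions) --- are indeed where the bookkeeping must be done carefully, but the approach is the correct one and matches the source.
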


So, in order to construct NET maps whose Teichm\"uller maps are constant, we may consider the following steps:
\begin{itemize}
\item[1.] Let $\Lambda_{2}$ be the lattice generated by $(1,0)$ and $(0,1)$.

\item[2.] Construct a finite Abelian group $A$ generated by two elements with $A/2A\cong \Z/2\Z\oplus\Z/2\Z$ such that $A$ has a nonseparating subset $H$.

\item[3.] Construct a lattice $\Lambda_{1}$ such that $\Lambda_{1}<\Lambda_{2}$ for which $\Lambda_{2}/2\Lambda_{1}\cong A$.

\item[4.] Construct an isomorphism $\Phi$ from $\Lambda_{2}$ to $\Lambda_{1}$, which in effect produces a Euclidean Thurston map $g$ corresponding to $\Lambda_{1}$ and $\Lambda_{2}$. That is, $g=\phi\circ\overline{i}$.

\item[5.] Construct an orientation-preserving homeomorphism $h:S^{2}_{1}\to S^2_{1}$ such that $h(P_{g})=p_{1}(H)$. Here $p_{1}:T_{1}\to S^{2}_{1}$.

\item[6.] Set $f:=h\circ g$. By Theorem \ref{teo:34}, if $f$ has four postcritical points then it is a NET map. In that case, $P_{f}=h(P_{g})$ and so $H=p_{1}^{-1}(P_{f})$. Then, by Theorem \ref{thm:55},  the Teichm\"uller map of $f$ is constant. Since $|\Lambda_{2}/\Lambda_{1}|=\text{deg}(f)$, we have $|A|=4\deg(f)$.
\end{itemize}

In \cite{CFPP},  J. Cannon et al. prove a general existence theorem. If $d$ is an integer with $d>2$ such that $d$ is divisible by either $2$ or $9$, then there exists a NET map with degree $d$ whose  Teichm\"uller pullback map is constant. In particular, it is possible to construct NET maps with odd degree and constant  Teichm\"uller map.
 
\begin{lem}\label{lem:deg} Let $s:S^2\to S^2$ be an orientation-preserving branched covering map such that $\deg(s,x)=2$ for every $x\in\Omega_{s}$. If $|V_{s}|\leq 3$ then $\deg(s)=2$ or $\deg(s)=4$. 

\end{lem}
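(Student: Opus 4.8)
The plan is to play the Riemann--Hurwitz formula against a pigeonhole count over the fibers of $s$. Write $d=\deg(s)$. Since $S^2$ has Euler characteristic $2$ and every critical point of $s$ has local degree $2$, the Riemann--Hurwitz formula
\[
2 \;=\; 2d \;-\; \sum_{x\in\Omega_{s}}\bigl(\deg(s,x)-1\bigr)
\]
collapses to $2 = 2d - |\Omega_{s}|$, that is, $|\Omega_{s}| = 2d-2$. In particular $|\Omega_{s}|\geq 1$ (a branched cover has at least one critical point), so $d\geq 2$; the only value of $d$ for which $\Omega_s$ could be empty is $d=1$, which we discard since $s$ is genuinely branched.

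Next I would bound $|\Omega_{s}|$ from above by examining the fibers over the critical values. For $v\in V_{s}$ let $k_{v}$ be the number of critical points in $s^{-1}(v)$. Because $\sum_{x\in s^{-1}(v)}\deg(s,x)=d$ and each of those $k_{v}$ critical points contributes $2$ to this sum, we get $2k_{v}\leq d$, hence $k_{v}\leq\lfloor d/2\rfloor$. Since $\Omega_{s}$ is the disjoint union over $v\in V_{s}$ of the sets of critical points in $s^{-1}(v)$, and $|V_{s}|\leq 3$ by hypothesis,
\[
2d-2 \;=\; |\Omega_{s}| \;=\; \sum_{v\in V_{s}}k_{v} \;\leq\; 3\left\lfloor\frac{d}{2}\right\rfloor .
\]

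Finally I would solve this inequality by parity. If $d$ is even it reads $2d-2\leq 3d/2$, i.e.\ $d\leq 4$, leaving $d\in\{2,4\}$. If $d$ is odd it reads $2d-2\leq 3(d-1)/2$, i.e.\ $4d-4\leq 3d-3$, i.e.\ $d\leq 1$, contradicting $d\geq 2$; so $d$ cannot be odd. Therefore $\deg(s)=2$ or $\deg(s)=4$. I do not anticipate a genuine obstacle: the argument is elementary, and the one place needing care is the fiber bookkeeping --- specifically the inequality $k_{v}\leq\lfloor d/2\rfloor$, which is exactly what kills $d=3$ (there $|\Omega_{s}|=4$, yet each critical value carries at most one critical point, forcing four critical values, contradicting $|V_{s}|\leq 3$) --- together with the harmless treatment of the unbranched case $d=1$.
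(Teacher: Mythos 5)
Your argument is correct and is essentially the paper's proof: both rest on the Riemann--Hurwitz count $|\Omega_{s}|=2d-2$ combined with the pigeonhole bound that each fiber over a critical value contains at most $\lfloor d/2\rfloor$ critical points, hence $2d-2\leq 3\lfloor d/2\rfloor$. The only cosmetic difference is that you package the parity cases into one floor-function inequality, whereas the paper treats $d=3$, even $d\geq 6$, and odd $d\geq 5$ as separate cases.
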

\begin{proof}
If $\deg(s)=3$, the preimage under $s$ of every element of $V_s$ contains three points counting multiplicity and no such preimage contains two critical points. Then $s$ maps its four critical points bijectively to $V_s$ and so $|V_s|=4$.
 
Now suppose that $\deg(s)\geq 5$. If $\deg(s)=2k$ for some integer $k$ with $k\geq3$, then by the Riemann-Hurwitz formula $s$ has exactly $4k-2$ distinct critical points; and the preimage of each critical value contains at most $k$ critical points. If $|V_{s}|\leq3$, then there are at most $3k$ distinct critical points. This would imply that $4k-2\leq 3k$ which leads to a contradiction. If $\deg(s)=2k+1$ for some integer $k$ with $k\geq2$, then by the Riemann-Hurwitz formula $s$ has exactly $4k$ distinct critical points; and the preimage of each critical value contains at most $k$ critical points. If $|V_{s}|\leq3$, as in the even case, we reach a contradiction.
 \end{proof}

\begin{prop}\label{prop:37} There exist NET maps with constant pullback map that do not satisfy the McMullen's constant conditions.
\end{prop}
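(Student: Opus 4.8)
The plan is to produce a NET map of \emph{odd} degree whose Teichm\"uller map is constant and then to prove that no NET map of odd degree can satisfy the McMullen's constant conditions; degree $9$ is convenient. To build the example I would follow Steps 1--6 above: put $\Lambda_2=\Z^2$ and $\Lambda_1=3\Z^2$, so that $d:=[\Lambda_2:\Lambda_1]=9$ and $A:=\Lambda_2/2\Lambda_1\cong\Z/6\Z\oplus\Z/6\Z$, which is generated by two elements and has $A/2A\cong\Z/2\Z\oplus\Z/2\Z$. Its subgroup $2A\cong\Z/3\Z\oplus\Z/3\Z$ has eight nonzero elements, forming four pairs $\{\pm h_i\}$; by Example \ref{ex:1} this set is nonseparating for $2A$, hence by Lemma \ref{lem:56} it is a nonseparating subset $H$ of $A$. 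Taking $\Phi(x)=3x$ gives a Euclidean Thurston map $g$ of degree $9$ with $P_g=P_1$, and since $\deg g\ge5$ we have $V_g=P_g$; choosing an orientation-preserving homeomorphism $h$ with $h(P_g)=p_1(H)$, Theorem \ref{teo:34} makes $f:=h\circ g$ a NET map with $p_1^{-1}(P_f)=H$, so $\Sigma_f$ is constant by Theorem \ref{thm:55}. (Alternatively one may quote the degree-$9$ case of the general existence theorem recalled above.) Here $\deg f=9$, $|P_f|=4$, and $\deg(f,x)=2$ at every critical point of $f$.

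Next I would show this $f$ does not satisfy the McMullen's constant conditions. Suppose it did. Passing to the rational realization of $f$ --- which by Thurston invariance of the ramification portrait is still a NET map of degree $9$, with four postcritical points and all local degrees at critical points equal to $2$ --- we would have $f=B\circ A_0$ for rational maps $A_0,B$ together with a set $\mathcal A\subseteq\P^1$ such that $|\mathcal A|\le3$, $V_{A_0}\subseteq\mathcal A$, and $V_B\cup B(\mathcal A)\subseteq A_0^{-1}(\mathcal A)$. The key point is that a factorization of a NET map forces every intermediate local degree to equal $2$: local degrees multiply along $f=B\circ A_0$, so if $\deg(A_0,x)=k\ge2$ then $\deg(f,x)=\deg(B,A_0(x))\cdot k\ge2$, whence $x$ is critical for $f$ and $2=\deg(f,x)=\deg(B,A_0(x))\cdot k$, forcing $k=2$; the same reasoning, applied to a preimage under $A_0$ of a critical point of $B$, shows every critical point of $B$ also has local degree exactly $2$. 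Hence Lemma \ref{lem:deg} applies both to $A_0$ and to $B$.

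The contradiction is then immediate. If $\deg A_0\ge2$, then $|V_{A_0}|\le|\mathcal A|\le3$, so Lemma \ref{lem:deg} gives $\deg A_0\in\{2,4\}$; but $\deg A_0$ divides $\deg f=9$ --- impossible. So $\deg A_0=1$ and $\deg B=9$; then $A_0$ is injective, so $|V_B|\le|A_0^{-1}(\mathcal A)|=|\mathcal A|\le3$, and Lemma \ref{lem:deg} forces $\deg B\in\{2,4\}$, again contradicting $\deg B=9$. Therefore no such factorization exists, so $f$ witnesses the proposition. (If the intended convention admits only factorizations into maps of degree at least $2$, only the first case is needed; and the case $\deg B=1$ may be ruled out directly, since then $P_f\subseteq V_B\cup B(\mathcal A)=B(\mathcal A)$ would give $|P_f|\le3$.) I expect the real work to lie in the construction rather than in this non-factorization argument: one must get the lattice indices right so that $\Lambda_2/2\Lambda_1$ is genuinely $\Z/6\Z\oplus\Z/6\Z$, check via Example \ref{ex:1} and Lemma \ref{lem:56} that the eight nonzero elements of $2A$ really form a nonseparating subset of $A$, and confirm that $f=h\circ g$ has four postcritical points so that Theorem \ref{teo:34} applies (which holds because $\deg g\ge5$ forces $V_g=P_g$); the rest is a short Riemann--Hurwitz count, using only that $9$ is coprime to $2$ and to $4$.
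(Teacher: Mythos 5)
Your proof is correct and follows essentially the same route as the paper: exhibit an odd-degree NET map with constant pullback (your degree-$9$ construction is precisely the paper's own Example 3.11) and observe that Lemma \ref{lem:deg} forces the inner factor of any McMullen factorization to have degree $2$ or $4$, contradicting odd degree. Your added care in ruling out degree-one factors is a worthwhile refinement, since Lemma \ref{lem:deg} as stated does not literally exclude $\deg(s)=1$, but it does not change the approach.
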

\begin{proof} Let $f$ be an NET map with odd degree whose pullback map is constant. If $f$ satisfies the McMullen's constant conditions (see Theorem \ref{teo:12}), then there are two orientation-preserving branched covering maps $g$ and $s$ and a set $A$ such that $f=g\circ s$, $|A|\leq3$ and $V_{s}\subseteq A$. Since $f$ is nearly Euclidean, $\deg(s;x)=2$ for every $x\in\Omega_{s}$. By Lemma $\ref{lem:deg}$, either $\deg(s)=2$ or $\deg(s)=4$. This is impossible because $\deg(f)$ is an odd number.\end{proof}


\begin{ex} We construct an expanding rational NET map with degree 4 whose Teichm\"uller map is constant. Let $g$ be the Latt\'es map described in Example 9.17 of \cite{CFPP}.  The critical points of $g$ are $E_{1},E'_{1},E_{2},E'_{2},E_{3},E'_{3}$ and the postcritical set of $g$ is $\{e_{1},e_{2},e_{3},\infty\}$. Moreover, $g(e_{i})=g(\infty)=\infty$ and $g(E_{i})=g(E_{i}^{'})=e_{i}$ for $i\in\{1,2,3\}$. Now let $h:\P^{1}\to\P^{1}$ be an orientation-preserving homeomorphism such that $h(e_{1})=e_{1},\ h(e_{2})=E_{1},\ h(e_{3})=E_{1}^{'}, \ h(\infty)=\infty.$ 
Following the description given in Example 9.17 of \cite{CFPP}, one sees that $f=h\circ g$ is a NET map whose Teichm\"uller map is constant so it is combinatorially equivalent to a rational map, $R$. One easily verifies that $R(E_{1})=R(E_{1}^{'})=e_{1}$ mapping with degree $2$, $R(E_{2})=R(E_{2}^{'})=E_{1}$ mapping with degree $2$,
$R(E_{3})=R(E_{3}^{'})=E_{1}^{'}$ mapping with degree $2$
$R(e_{1})=R(\infty)=\infty$ mapping with degree $1$. So $R$ is a rational map without periodic critical points. Let $\mu$ be the Mobi\"us transformation that satisfies $\mu(E_{1})=0$, $\mu(E_{1}^{'})=\infty$, $\mu(e_{1})=1$ and let $F=\mu\circ R\circ\mu^{-1}$. Set $\alpha=\mu(E_{2}), \beta=\mu(E_{2}^{'}), \gamma=\mu(E_{3}), \lambda=\mu(E_{3}^{'})$. Using the branching data of $F$, we have that
$$F(z)=\Frac{(z-\alpha)^{2}(z-\beta)^2}{(z-\gamma)^2(z-\lambda)^2}.$$
The numerator of $F(z)-1$ is given by $(z-\alpha)^{2}(z-\beta)^2-(z-\gamma)^2(z-\lambda)^2$. Since $z=0$ is the unique zero of the numerator of $F(z)-1$ and has multiplicity 2, it follows that $(z-\alpha)(z-\beta)+(z-\gamma)(z-\lambda)= 2z^2$
and that $(z-\alpha)(z-\beta)-(z-\gamma)(z-\lambda)$ is a nonzero constant. Then, 
$$-(\alpha+\beta)+(\gamma+\lambda)=0,$$
$$-(\alpha+\beta)-(\gamma+\lambda)=0,$$
$$\alpha\beta+\gamma\lambda=0.$$
This forces $\alpha+\beta=0$ and $\gamma+\lambda=0$. So $\alpha=-\beta$, $\gamma=-\lambda$ and $-\beta^{2}-\lambda^{2}=0$. Hence, $F(z)$ has the form
$$F(z)=\Big(\Frac{z^{2}-\beta^{2}}{z^{2}+\beta^{2}}\Big)^{2}$$\\ with the restriction $F(F(1))=F(1)$. This restriction implies that either
$$\Frac{(F(1))^{2}-\beta^{2}}{(F(1))^{2}+\beta^{2}}=\Frac{1-\beta^{2}}{1+\beta^{2}} \ \ \ \text{or}\ \ \ \ \Frac{(F(1))^{2}-\beta^{2}}{(F(1))^{2}+\beta^{2}}=-\Frac{1-\beta^{2}}{1+\beta^{2}}.$$
Thus, either $(F(1))^2=1$ or $(F(1))^2=\beta^4$. If $(F(1))^2=1$, then $F(1)$ has to be $-1$. Hence, $\Big(\Frac{1-\beta^{2}}{1+\beta^{2}}\Big)^{2}=-1$
and so $\beta^2=\pm i$. If $(F(1))^2=\beta^4$, then $\Big(\Frac{1-\beta^{2}}{1+\beta^{2}}\Big)^{4}=\beta^4$. 

We now show that if $\beta^{2}=i$, then the map $F(z)=(z^{2}-i)^2/(z^{2}+i)^{2}$ is an expanding rational (see \cite{BM}) NET map with constant pullback map. To see this, it suffices to consider $g(w)=(w-i)^{2}/(w+i)^{2}$, $s(z)=z^2$, and $A=\{0,1,\infty\}$. Note that $F=g\circ s$ and $g,s$ and $A$ verify the McMullen's constant conditions.\end{ex} 

\begin{rem} The preceding example suggests the following family of rational maps: $F_{n}(z)=(z^n-i)^{2}/(z^n+i)^{2}$, with $n\geq2$. Each $F_{n}$ has no periodic critical points, so each $F_{n}$ is expanding. Also, note that $F_{n}=g\circ s_{n}$ where $g(w)=(w-i)^2/(w+i)^{2}$ and $s_{n}(z)=z^n$. If $n$ is even and $A=\{0,1,\infty\}$, then $g,s_{n}$ and $A$ verify the McMullen's constant conditions. Thus, if $n$ is even, $F_{n}$ has constant Teichm\"uller map.\end{rem}

\begin{ex}\label{ex:} We construct a NET map of degree 9 whose Teichm\"uller map is constant. First of all, we construct an Abelian group of degree $4\cdot 9=36$ which contains a nonseparating subset. We take $A=\Z/6\Z\oplus\Z/6\Z$. The 3-torsion subgroup of $A$ is isomorphic to $\Z/3\Z\oplus\Z/3\Z$. Example \ref{ex:1} and Lemma \ref{lem:56} imply that $H=\{\pm(0,2),\pm(2,2),\pm(2,4),\pm(2,0)\}$ is a nonseparating subset of $A$.
Let $\Lambda_{2}=\Z^2$ and let $\Lambda_{1}=3\Lambda_{2}$. So $\Lambda_{2}/2\Lambda_{1}\cong A$. Let $\Gamma_{j}$ be the group generated by rotations of order 2 about elements of $\Lambda_{j}$. Note that $S^{2}_{j}:=\R^{2}/\Gamma_{j}$ is a sphere and that the identity map $i:\R^{2}\to\R^{2}$ induces a quotient map $\overline{i}:S^{2}_{1}\to S^{2}_{2}$. This quotient map is a branched covering map of degree $d=[\Lambda_{2}:\Lambda_{1}]=9$. Let $\Phi:(\R^2,\Lambda_{2})\to(\R^2,\Lambda_{1})$ be the affine map $\Phi(x,y)=(3x,3y)$. The map $\Phi$ induces a homeomorphism $\phi$ from $S^{2}_{2}$ to $S^{2}_{1}$. Then we have the following commutative diagram:
$$\xymatrix{
\R^{2} \ar[d]_{/\Gamma_{1}} \ar[r]^{i} &\R^{2}\ar[d]_{/\Gamma_{2}}  \ar[r]^{\Phi} &\R^{2} \ar[d] \ar[d]_{/\Gamma_{1}}\\
S^{2}_{1} \ar[r]^{\overline{i}} & S^{2}_{2}\ar[r]^{\phi} &S^{2}_{1}}
$$
Below is a fundamental domain for the action of $\Gamma_{1}$ on $\R^{2}$. The red dots are elements of $\Lambda_{2}$. The lower left corner is $(0,0)$. Points are labeled (bold) by their images in $S_{1}^{2}$ under the map $/\Gamma_{1}$. The map $g:=\phi\circ\overline{i}$ is a Euclidean map with postcritical set $P_{g}=\Lambda_{1}/\Gamma_{1}=\{\text{a,b,c,d}\}$. Let $T_{1}:=\R^{2}/2\Lambda_{1}$ and let $p_{1}:T_{1}\to S_{1}^{2}$ be the map defined by $(x,y)+2\Lambda_{1}\mapsto/\Gamma_{1}(x,y)$. We identify $A$ with $\Lambda_{2}/2\Lambda_{1}$ so that the subset $H=\{\pm(0,2)+2\Lambda_{1},\pm(2,2)+2\Lambda_{1},\pm(2,4)+2\Lambda_{1},\pm(2,0)+2\Lambda_{1}\}$ is nonseparating in $\Lambda_{2}/2\Lambda_{1}$.
\begin{figure}[htb]
\center{\includegraphics[width=4.4in]{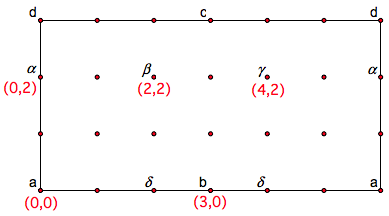}}
\caption[Fundamental domain for the action of $\Gamma_{1}$ on $\R^2$.]{\label{keywordforfigure}
{A fundamental domain for the action of $\Gamma_{1}$ on $\R^2$.}}
\end{figure}

Let $h:S_{1}^2\to S_{1}^2$ be an orientation-preserving homeomorphism so that $h(\text{a})=\delta$, $h(\text{b})=\beta$, $h(\text{c})=\alpha$, $h(\text{d})=\gamma$. Let $f:=h\circ g$. Then $P_{f}=\{\alpha,\beta,\gamma,\delta\}$ and $P_{f}=h(P_{g})=p_{1}(H)$, where $H$ is the nonseparating set (identification) contained in $T_{1}$. By Theorem \ref{thm:55}, the map $f$ is a NET map of degree 9 whose Teichm\"uller map is constant; however, because of 
Proposition \ref{prop:37}, this example does not satisfy the McMullen's constant conditions.\vspace{.1in}\\
\end{ex}
\section{Nonexistence results}
The following nonexistence results can be found in Section 10 of \cite{CFPP}.
\begin{thm}\label{thm:910} There does not exist a NET map with degree $2$ whose Teichm\"uller map is constant.
\end{thm}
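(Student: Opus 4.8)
The plan is to combine the algebraic criterion of Theorem~\ref{thm:55} with the rigid shape of a degree-$2$ NET map. Suppose, for contradiction, that $f$ is a NET map of degree $2$ with constant Teichm\"uller map. By Theorem~\ref{teo:34}(2) and the construction of Section~3 we may write $f=h\circ g$ with $g=\phi\circ\overline{i}$ a Euclidean Thurston map of degree $2$ sitting in the standard commutative diagram for a sublattice $\Lambda_1<\Lambda_2$ of index $\deg f=2$. Then $|A|=|\Lambda_2/2\Lambda_1|=[\Lambda_2:\Lambda_1]\,[\Lambda_1:2\Lambda_1]=8$, the group $A$ is generated by two elements (being a quotient of $\Lambda_2$), and $A/2A\cong\Lambda_2/2\Lambda_2\cong\mathbb{Z}/2\mathbb{Z}\oplus\mathbb{Z}/2\mathbb{Z}$; the only abelian group of order $8$ satisfying the last two conditions is $A\cong\mathbb{Z}/4\mathbb{Z}\oplus\mathbb{Z}/2\mathbb{Z}$. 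By Theorem~\ref{thm:55}, $H:=p_1^{-1}(P_f)$ is a nonseparating subset of $A$.

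I would first pin down $H$ exactly. Every nonseparating subset of $\mathbb{Z}/4\mathbb{Z}\oplus\mathbb{Z}/2\mathbb{Z}$ must contain all four elements of order $4$: if some order-$4$ pair $\{\pm h\}$ were missing, then, after applying an automorphism (Lemma~\ref{lem:55}), we may assume $h=(1,0)$, and computing the coset numbers of the five admissible pairs relative to the subgroup $B=\langle(0,1)\rangle$ (for which $A/B$ is cyclic of order $4$) gives, for any four of them, a sorted quadruple with $c_2\neq c_3$; thus $B$ separates $H$, which is absurd. Given that, the two remaining pairs of $H$ are singletons $\{h_3\},\{h_4\}$ in the $2$-torsion subgroup $\Lambda_1/2\Lambda_1$, and running through the few remaining two-element subsets of $\Lambda_1/2\Lambda_1$ against $B=\langle(0,1)\rangle$ and $B=\langle(2,1)\rangle$ shows that $\{h_3,h_4\}$ must be a coset of $2A$. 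Hence $H=A\setminus C$ for a coset $C$ of $2A$ contained in $\Lambda_1/2\Lambda_1$ (this is the classification already implicit in Example~\ref{ex:2}; cf.\ \cite{CFPP}).

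Next I would read this off the commutative diagram. The four elements of $\Lambda_1/2\Lambda_1\subseteq A$ are the $p_1$-fibres of the four corners $P_1$ of $S^2_1$, while the four order-$4$ elements are the $p_1$-fibres of the two branch points of $\overline{i}$, that is, of $\Omega_{\overline{i}}=\Omega_{g}=\Omega_{f}$. Moreover the restriction of $\overline{i}$ to the corners is the natural map $\Lambda_1/2\Lambda_1\to\Lambda_2/2\Lambda_2$, whose kernel is $2\Lambda_2/2\Lambda_1=2A$ (note $2\Lambda_2\subseteq\Lambda_1$ since $[\Lambda_2:\Lambda_1]=2$); so $\overline{i}$ identifies two corners exactly when the corresponding elements of $\Lambda_1/2\Lambda_1$ differ by an element of $2A$. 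Since $H=A\setminus C$ contains every order-$4$ element, $\Omega_{f}\subseteq p_1(H)=P_f$; and the two $2$-torsion elements of $H$ form the single $2A$-coset $(\Lambda_1/2\Lambda_1)\setminus C$, so the two corners of $S^2_1$ lying in $P_f$ are identified by $\overline{i}$, hence by $g=\phi\circ\overline{i}$, hence by $f=h\circ g$ (with $\phi$ and $h$ homeomorphisms). Therefore $|f(P_f)|\le|P_f|-1=3$.

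The contradiction is then immediate. As $\Omega_{f}\subseteq P_f$ and $P_f=\bigcup_{n\ge1}f^{\circ n}(\Omega_{f})$ is forward invariant, any $p\in P_f$ equals $f^{\circ n}(x)=f\bigl(f^{\circ(n-1)}(x)\bigr)$ for some $x\in\Omega_{f}$ and $n\ge1$, and $f^{\circ(n-1)}(x)\in P_f$; hence $f$ maps $P_f$ onto $P_f$ and $|f(P_f)|=|P_f|=4$, contradicting the previous paragraph. Thus no NET map of degree $2$ has constant pullback map. The routine part is the coset-number bookkeeping of the second paragraph; the step I expect to need the most care is the dictionary between cosets of $2A$ in $\Lambda_1/2\Lambda_1$ and the pairs of corners collapsed by $\overline{i}$, since that is exactly what forces $f$ to be non-injective on its postcritical set.
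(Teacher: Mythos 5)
Your argument is correct, but note that the paper itself gives no proof of Theorem \ref{thm:910}: it is stated as one of the nonexistence results imported from Section 10 of \cite{CFPP}. So what you have produced is a self-contained derivation inside the paper's own framework, and every step checks out. The order computation $|A|=8$ with $A/2A\cong\mathbb{Z}/2\mathbb{Z}\oplus\mathbb{Z}/2\mathbb{Z}$ forcing $A\cong\mathbb{Z}/4\mathbb{Z}\oplus\mathbb{Z}/2\mathbb{Z}$ is right; the coset-number bookkeeping is right (with $B=\langle(0,1)\rangle$ and generator $(1,0)+B$ the six pairs have coset numbers $0,0,2,2,1,1$, so omitting either order-$4$ pair always leaves $c_2\neq c_3$, and the two subgroups $\langle(0,1)\rangle$, $\langle(2,1)\rangle$ do eliminate the four non-coset choices of $\{h_3,h_4\}$); and the dictionary you flag as delicate is exactly right: $\Lambda_1/2\Lambda_1$ is the $2$-torsion of $A$, the order-$4$ pairs are the $p_1$-fibres of $\Omega_{\overline{i}}=\Omega_f$, and $\overline{i}$ restricted to the corners is $\Lambda_1/2\Lambda_1\to\Lambda_2/2\Lambda_2$ with kernel $2A$, so the two corners in $P_f$ (a $2A$-coset) are collapsed. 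Together with $\Omega_f\subseteq P_f$ forcing $f(P_f)\supseteq P_f$, the contradiction $4\le|f(P_f)|\le3$ is clean. This is essentially the mechanism behind the CFPP proof (nonseparating in $\mathbb{Z}/4\mathbb{Z}\oplus\mathbb{Z}/2\mathbb{Z}$ forces a configuration of $p_1^{-1}(P_f)$ that no actual degree-$2$ NET map can realize), so your proof buys the reader an explicit argument where the paper only offers a citation; the only cosmetic point is that Example \ref{ex:2} shows existence of a nonseparating set of the required shape rather than the classification, but you prove the classification yourself in the preceding sentences, so nothing is missing.
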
 

\begin{thm}\label{thm:911} Let $A$ be a finite Abelian group such that $A/2A\cong\mathbb{Z}/2\mathbb{Z}\oplus\mathbb{Z}/2\mathbb{Z}$ and $2A$ is a cyclic  group with odd order. Then $A$ does not contain a nonseparating subset.
\end{thm}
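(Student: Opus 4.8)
The plan is to reduce $A$ to a concrete normal form and then test the nonseparating condition against a short list of cyclic subgroups: three ``coordinate'' subgroups that pin down the mod-$2$ types of the elements of a putative nonseparating set, and one further subgroup that produces a contradiction.

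First I would identify $A$. An invariant-factor computation shows that the two hypotheses $A/2A\cong\Z/2\Z\oplus\Z/2\Z$ and ``$2A$ cyclic of odd order'' together force $A\cong\Z/2\Z\oplus\Z/2m\Z$ with $m$ odd, equivalently $A\cong\Z/2\Z\oplus\Z/2\Z\oplus\Z/m\Z$, and then $2A=M$, where $M$ denotes the odd part of $A$. Let $\tau\colon A\to A/2A$ be the quotient map and, using the canonical splitting of $A$ into its Sylow $2$-subgroup and $M$, let $x_h\in M\cong\Z/m\Z$ be the odd-part component of $h\in A$. Two elementary facts will be used repeatedly: $\tau(h)=\tau(-h)$; and if $\{\pm h\}\cap\{\pm h'\}=\emptyset$ while $\tau(h)=\tau(h')$, then $h\pm h'\in M$ and $x_h\ne\pm x_{h'}$ in $\Z/m\Z$.

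Now suppose, for contradiction, that $H=\coprod_{i=1}^{4}\{\pm h_i\}$ is a nonseparating subset of $A$. For each nonzero $\bar\beta\in A/2A$ I take an order-$2$ lift $\beta$ and set $E_{\bar\beta}:=\langle\beta\rangle+M$; this is a cyclic subgroup of $A$ (isomorphic to $\Z/2m\Z$) with $A/E_{\bar\beta}\cong\Z/2\Z$, and since $E_{\bar\beta}\supseteq M$ one has $h_i\in E_{\bar\beta}$ iff $\tau(h_i)\in\{0,\bar\beta\}$. The two coset numbers of $H$ relative to $E_{\bar\beta}$ are $0$ and $1$, and $c_2=c_3$ holds exactly when the number of indices $i$ with $\tau(h_i)\in\{0,\bar\beta\}$ is $\ne2$. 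Imposing this for the three nonzero $\bar\beta$ and running the (finite) count on the multiset $\{\tau(h_1),\dots,\tau(h_4)\}$ in $A/2A$ shows it must be one of $\{\theta,\theta,\theta,\theta\}$, $\{\theta,\theta,\theta,w\}$, $\{\theta,\theta,\theta,0\}$, $\{0,0,0,v\}$, or $\{0,0,0,0\}$, where $\theta,v,w$ are nonzero and $w\ne\theta$. In all five cases at least three of the $h_i$ share a type, so after relabeling $\tau(h_1)=\tau(h_2)=\tau(h_3)$, and hence $x_{h_i}\ne\pm x_{h_j}$ for distinct $i,j\in\{1,2,3\}$; moreover, in all five cases the set $\Delta:=\{\tau(h_i)-\tau(h_j):\tau(h_i)\ne\tau(h_j)\}$ is empty or a single nonzero element of $A/2A$, so there is a nonzero $e\in A/2A$ with $e\notin\Delta$.

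To finish, fix such an $e$, lift it to an order-$2$ element $\epsilon$, and set $B:=\langle\epsilon\rangle$, so that $A/B\cong\Z/2\Z\oplus\Z/m\Z\cong\Z/2m\Z$ is cyclic. The decisive claim is that the images $\bar h_1,\dots,\bar h_4$ in $A/B$ still form four pairwise disjoint pairs $\{\pm\bar h_i\}$. Indeed, if $\tau(h_i)=\tau(h_j)$ then $h_i\pm h_j\in M$ and $M\cap B=\{0\}$, so $\bar h_i=\pm\bar h_j$ would force $x_{h_i}=\pm x_{h_j}$, excluded by the disjointness fact above; and if $\tau(h_i)\ne\tau(h_j)$ then $h_i\pm h_j$ has type $\tau(h_i)-\tau(h_j)\in\Delta$, which is neither $0$ nor $e$, so $h_i\pm h_j\notin\{0,\epsilon\}=B$, whence again $\bar h_i\ne\pm\bar h_j$. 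Two coset numbers of $H$ relative to $B$ coincide only when the corresponding pairs $\{\pm\bar h_i\}$ coincide in $A/B$; since these four pairs are distinct, the four coset numbers (for any generator of the cyclic group $A/B$) are pairwise distinct, so after ordering $c_2<c_3$. This contradicts the nonseparating hypothesis, so $A$ has no nonseparating subset.

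The invariant-factor reduction and the finite check on the type multiset are routine bookkeeping; I expect the real content — and the point to be most careful about — to be the choice of the subgroup $B=\langle\epsilon\rangle$. Everything turns on the consequence of the type analysis that $\Delta$ cannot exhaust the three nonzero elements of $A/2A$: this is precisely what lets the four pairs of $H$ survive in the cyclic quotient $A/B$ and thus produce a subgroup relative to which $H$ separates.
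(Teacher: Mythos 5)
Your argument is correct, and I checked the two places where it could plausibly break. The reduction to $A\cong\Z/2\Z\oplus\Z/2\Z\oplus\Z/m\Z$ with $m$ odd and $2A=M$ (the odd part) is right; the subgroups $E_{\bar\beta}=\langle\beta\rangle+M$ are cyclic with $A/E_{\bar\beta}\cong\Z/2\Z$, so they are legitimate test subgroups, and the criterion ``$c_2=c_3$ iff the number of $i$ with $\tau(h_i)\in\{0,\bar\beta\}$ is $\ne 2$'' is exactly what the definition gives for a quotient of order $2$. The enumeration of admissible type multisets is complete (the excluded patterns are precisely those giving a $2$--$2$ split for some $\bar\beta$), and in each surviving case $\Delta$ has at most one element, so a nonzero $e\notin\Delta$ exists. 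The final step is also sound: with $B=\langle\epsilon\rangle$ one has $A/B\cong\Z/2m\Z$ cyclic, your two cases show $h_i\pm h_j\notin B$ for $i\ne j$, and since equal coset numbers force the pairs $\{\pm\bar h_i\}$, $\{\pm\bar h_j\}$ to share an element of $A/B$, disjointness of the four image pairs gives four distinct coset numbers, contradicting $c_2=c_3$. (The degenerate case $m=1$ terminates even earlier, at the ``three share a type'' step.) Note that the paper does not prove this statement itself: Theorem 4.2 is quoted from Section 10 of \cite{CFPP}, so there is no in-paper proof to compare against. Your route is, however, very much in the spirit of the tools the paper does use for its own nonexistence theorems — compare Lemma \ref{lem:a6}, where the author likewise tests a putative nonseparating set against the three subgroups $\langle(1,0)\rangle\oplus G$, $\langle(0,1)\rangle\oplus G$, $\langle(1,1)\rangle\oplus G$ and then exploits a difference set avoiding a well-chosen cyclic subgroup; your $E_{\bar\beta}$ and the final $B=\langle\epsilon\rangle$ play exactly these roles, with $G$ specialized to the full odd part. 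One small wording point: relative to $E_{\bar\beta}$ the coset numbers need not be ``$0$ and $1$'' (all four pairs could land in one coset); what you actually use — the $\ne 2$ criterion — is stated correctly, so nothing is affected.
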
 

\begin{thm}\label{thm:912} There does not exist a NET map with degree an odd square-free integer and constant Teichm\"uller map.
\end{thm}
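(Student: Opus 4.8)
The plan is to reduce the statement about NET maps to the purely algebraic statement of Theorem \ref{thm:911} via the dictionary set up in Section 3, and then argue on the structure of the relevant group. Suppose, for contradiction, that $f$ is a NET map whose Teichmüller map is constant and whose degree $d$ is odd and square-free. By Theorem \ref{thm:55}, the lattices $\Lambda_1 < \Lambda_2$ associated to $f$ have the property that $p_1^{-1}(P_f)$ is a nonseparating subset of the finite Abelian group $A := \Lambda_2/2\Lambda_1$. So it suffices to show that no such $A$ can contain a nonseparating subset.

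First I would pin down the structure of $A$. Since $\Lambda_1 < \Lambda_2$ are rank-$2$ lattices with $[\Lambda_2:\Lambda_1]=d$, the group $\Lambda_2/\Lambda_1$ is an Abelian group of order $d$ generated by two elements; as $d$ is odd (hence $2$ is invertible mod $d$) we get $2\Lambda_1 = \Lambda_1$ inside $\Lambda_2/2\Lambda_1$ only up to the $2$-part, so more carefully: $\Lambda_2/2\Lambda_1$ has a canonical surjection onto $\Lambda_2/2\Lambda_2 \cong \Z/2\Z\oplus\Z/2\Z$ with kernel $2\Lambda_2/2\Lambda_1 \cong \Lambda_2/\Lambda_1$, which has odd order $d$. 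Hence $A$ fits in $0 \to \Z/d\Z\text{-ish piece} \to A \to \Z/2\Z\oplus\Z/2\Z \to 0$; since $|A| = 4d$ with $d$ odd, the Sylow decomposition gives $A \cong (\Z/2\Z\oplus\Z/2\Z) \oplus A_{\mathrm{odd}}$ where $A_{\mathrm{odd}}$ is the odd part, of order $d$. Because $A$ is generated by two elements and $d$ is square-free, $A_{\mathrm{odd}}$ is a cyclic group of odd square-free order, and $A/2A \cong A_{\mathrm{odd}}/2A_{\mathrm{odd}}\oplus (\Z/2\Z)^2$; but $d$ odd forces $2A_{\mathrm{odd}} = A_{\mathrm{odd}}$, so $A/2A \cong \Z/2\Z\oplus\Z/2\Z$. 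Moreover $2A = 2\big((\Z/2\Z)^2\big) \oplus 2A_{\mathrm{odd}} = 0 \oplus A_{\mathrm{odd}}$, which is cyclic of odd order. Thus $A$ satisfies exactly the hypotheses of Theorem \ref{thm:911}, which tells us $A$ has no nonseparating subset — contradiction.

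The remaining point is to make sure the hypothesis ``$A/2A \cong \Z/2\Z\oplus\Z/2\Z$'' really does hold, i.e. that a NET map's group $A = \Lambda_2/2\Lambda_1$ always has this quotient. This is built into the construction recalled in Section 3: there $A/2A \cong \Z/2\Z\oplus\Z/2\Z$ is listed as one of the standing requirements (step 2 of the construction, and it holds for any NET map since $\Lambda_2/2\Lambda_2 \cong (\Z/2\Z)^2$ and this is a quotient of $\Lambda_2/2\Lambda_1$ by the divisible-by-$2$ odd part when $d$ is odd). So for odd $d$ the condition is automatic, and the deduction above goes through verbatim.

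The main obstacle is essentially bookkeeping rather than a deep difficulty: one must correctly identify the odd part of $A = \Lambda_2/2\Lambda_1$ as a \emph{cyclic} group, which uses that $\Lambda_2/\Lambda_1$ is two-generated of square-free order (a two-generated Abelian group of square-free order is cyclic). Once the structural isomorphism $A \cong (\Z/2\Z\oplus\Z/2\Z)\oplus C$ with $C$ cyclic of odd order is in hand, invoking Theorem \ref{thm:911} is immediate. I would therefore organize the write-up as: (i) recall the reduction via Theorem \ref{thm:55}; (ii) compute the group structure of $A$; (iii) verify the hypotheses of Theorem \ref{thm:911}; (iv) conclude.
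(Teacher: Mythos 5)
Your argument is correct. The paper itself only quotes Theorem \ref{thm:912} from Cannon--Floyd--Parry--Pilgrim without reproducing a proof, but your deduction --- reduce via Theorem \ref{thm:55} to showing $A=\Lambda_{2}/2\Lambda_{1}$ has no nonseparating subset, identify $A\cong(\Z/2\Z\oplus\Z/2\Z)\oplus(\Lambda_{2}/\Lambda_{1})$ using the surjection onto $\Lambda_{2}/2\Lambda_{2}$ with odd kernel $2\Lambda_{2}/2\Lambda_{1}\cong\Lambda_{2}/\Lambda_{1}$, note that a finite Abelian group of square-free order is cyclic so $2A$ is cyclic of odd order, and invoke Theorem \ref{thm:911} --- is exactly the intended route and mirrors how the paper derives its own corollaries (e.g.\ Corollary \ref{cor:58}) from the corresponding algebraic nonexistence theorems.
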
 

Our first nonexistence result is the following. 
\begin{thm}\label{thm:57} Let $A$ be a finite Abelian group generated by two elements such that $A/2A\cong\mathbb{Z}/2\mathbb{Z}\oplus\mathbb{Z}/2\mathbb{Z}$. If $|A|=4p^2$ with $p$ prime and $p\geq 5$, then $A$ does not contain a nonseparating subset.
\end{thm}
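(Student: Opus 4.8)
The plan is to classify $A$ up to isomorphism, dispose of the easy case with a cited nonexistence result, and then attack the hard case $A\cong\mathbb{Z}/2p\mathbb{Z}\oplus\mathbb{Z}/2p\mathbb{Z}$ by combining Lemma~\ref{lem:a6} with an explicit coset-number computation. Since $A$ is generated by two elements and $A/2A\cong\mathbb{Z}/2\mathbb{Z}\oplus\mathbb{Z}/2\mathbb{Z}$, we may write $A\cong\mathbb{Z}/2m\mathbb{Z}\oplus\mathbb{Z}/2n\mathbb{Z}$ with $m\mid n$; from $|A|=4p^{2}$ we get $mn=p^{2}$, so either $A\cong\mathbb{Z}/2\mathbb{Z}\oplus\mathbb{Z}/2p^{2}\mathbb{Z}$ or $A\cong\mathbb{Z}/2p\mathbb{Z}\oplus\mathbb{Z}/2p\mathbb{Z}$. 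In the first case $2A\cong\mathbb{Z}/p^{2}\mathbb{Z}$ is cyclic of odd order, so Theorem~\ref{thm:911} finishes it. So assume $A\cong\mathbb{Z}/2p\mathbb{Z}\oplus\mathbb{Z}/2p\mathbb{Z}$; as $p$ is odd, the Chinese Remainder Theorem identifies $A$ with $(\mathbb{Z}/2\mathbb{Z})^{2}\oplus(\mathbb{Z}/p\mathbb{Z})^{2}$, and let $\phi\colon A\to(\mathbb{Z}/p\mathbb{Z})^{2}$ be the projection onto the second factor. Suppose, for contradiction, $A$ has a nonseparating subset $H=\coprod_{i=1}^{4}\{\pm h_{i}\}$, and write $h_{i}=(u_{i},v_{i})$ with $u_{i}\in(\mathbb{Z}/2\mathbb{Z})^{2}$, $v_{i}=\phi(h_{i})\in(\mathbb{Z}/p\mathbb{Z})^{2}$.

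I would then make two reductions. If $\langle\phi(H)\rangle\neq(\mathbb{Z}/p\mathbb{Z})^{2}$, it is trivial or a line, so $H$ lies in a subgroup $A'$ isomorphic to $(\mathbb{Z}/2\mathbb{Z})^{2}$ or to $\mathbb{Z}/2\mathbb{Z}\oplus\mathbb{Z}/2p\mathbb{Z}$; by Lemma~\ref{lem:a05} $H$ is nonseparating for $A'$, which is impossible, since $(\mathbb{Z}/2\mathbb{Z})^{2}$ has no nonseparating subset (its only candidate is the whole group, which separates) and for $\mathbb{Z}/2\mathbb{Z}\oplus\mathbb{Z}/2p\mathbb{Z}$ the group $2A'$ is cyclic of odd order, so Theorem~\ref{thm:911} applies. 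Hence $\langle\phi(H)\rangle=(\mathbb{Z}/p\mathbb{Z})^{2}$. Also, since $A$ has exponent $2p$, a short group-theoretic check shows that the cyclic subgroups $B\leq A$ with $A/B$ cyclic are exactly the $3(p+1)$ subgroups of order $2p$, each of the form $B=\langle\epsilon\rangle\oplus L$ for an order-$2$ element $\epsilon$ and a line $L\leq(\mathbb{Z}/p\mathbb{Z})^{2}$, with $A/B\cong\mathbb{Z}/2p\mathbb{Z}$; so $H$ nonseparating means $c_{2}=c_{3}$ for all such $B$ and all generators.

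The engine is Lemma~\ref{lem:a6} with $a=b=p$. Its hypothesis requires a cyclic $G\leq(\mathbb{Z}/p\mathbb{Z})^{2}$ with cyclic quotient and $G\cap D\subseteq\{0\}$, where $D=\{\phi(h_{i})\pm\phi(h_{j}):i<j\}$; as $D$ has at most $\binom{4}{2}\cdot2=12$ nonzero elements and $(\mathbb{Z}/p\mathbb{Z})^{2}$ has $p+1$ lines, such a $G$ exists whenever $p+1>12$, i.e.\ for $p\geq13$. The lemma then lets us assume $\{v_{1},v_{2}\}$ is a basis and $v_{2}=v_{3}=v_{4}$, so $u_{2},u_{3},u_{4}$ are distinct; translating by an order-$2$ element (Lemma~\ref{lem:55}) we put $u_{1}=0$. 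Pick a line $L\notin\{\langle v_{1}\rangle,\langle v_{2}\rangle,\langle v_{1}-v_{2}\rangle,\langle v_{1}+v_{2}\rangle\}$ (only $p+1\geq6$ is needed), a nonzero functional $\psi$ on $(\mathbb{Z}/2\mathbb{Z})^{2}$ chosen to split $\{u_{2},u_{3},u_{4}\}$ favorably, and an isomorphism $(\mathbb{Z}/p\mathbb{Z})^{2}/L\cong\mathbb{Z}/p\mathbb{Z}$ sending $\bar v_{2}$ to $1$. Relative to $B=\langle\epsilon_{\psi}\rangle\oplus L$ and the matching generator, the coset numbers of $h_{2},h_{3},h_{4}$ all lie in $\{1,p-1\}$ with exactly two equal, while that of $h_{1}$ is the even element of $\{r,p-r\}$ for $r$ the image of $v_{1}$, which is $\neq p-1$ because $L\neq\langle v_{1}\pm v_{2}\rangle$. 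A short check on the resulting multiset $\{1,1,p-1,\ast\}$ or $\{p-1,p-1,1,\ast\}$ gives $c_{2}\neq c_{3}$, contradicting that $H$ is nonseparating. Note this last computation uses only $p\geq5$; it is solely the existence of $G$ that currently forces $p\geq13$.

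I expect the genuine obstacle to be the finite range $p\in\{5,7,11\}$, where $p+1\leq12$ and the crude count above need not produce the line $G$ demanded by Lemma~\ref{lem:a6}; some extra input is truly necessary, since for $p=3$ the statement is false (Example~\ref{ex:}, where $\mathbb{Z}/6\mathbb{Z}\oplus\mathbb{Z}/6\mathbb{Z}$ has a nonseparating subset). For these three primes the plan is to sharpen the count using nonseparation itself: when a line $L$ meets $D$, the coset numbers for $B=\langle\epsilon\rangle\oplus L$ are trapped in a $2$-element set for the relevant indices, and running this over the three choices of $\epsilon$ should force enough coincidences among the $v_{i}$ (equivalently, enough collapse of $D$) that a good line $G$ exists after all. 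If no uniform argument emerges, the three groups $\mathbb{Z}/10\mathbb{Z}\oplus\mathbb{Z}/10\mathbb{Z}$, $\mathbb{Z}/14\mathbb{Z}\oplus\mathbb{Z}/14\mathbb{Z}$, $\mathbb{Z}/22\mathbb{Z}\oplus\mathbb{Z}/22\mathbb{Z}$ can be settled by a direct finite verification: by Lemmas~\ref{lem:55} and~\ref{lem:a05} the candidate $H$ may be normalized modulo $\mathrm{Aut}(A)$ and translation, and for each there are only $3(p+1)\leq36$ subgroups $B$ to test.
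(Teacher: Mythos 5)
Your reduction of $A$ to the two isomorphism types, the use of Theorem \ref{thm:911} and Lemma \ref{lem:a05} to force $\langle\phi(H)\rangle=(\Z/p\Z)^{2}$, and the contradiction you extract once Lemma \ref{lem:a6} applies are all sound and match the paper's strategy. (The paper's version of that final contradiction is a bit cleaner: it takes $B$ to be a cyclic subgroup of order $2p$ containing $h_{2}$ itself, so that $h_{2}+B,h_{3}+B,h_{4}+B$ all land in the order-$2$ subgroup of $A/B$ and are not all equal, while $h_{1}+B$ cannot have order dividing $2$, since otherwise $\phi(h_{1})$ would lie in the line $\phi(B)\ni\phi(h_{2})$, contradicting $\langle\phi(h_{1}),\phi(h_{2})\rangle=(\Z/p\Z)^{2}$.)

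The genuine gap is exactly where you flag it: producing a line $G$ with $G\cap D\subseteq\{0\}$ when $p\in\{5,7,11\}$, where $\#D\leq 12$ is not beaten by the $p+1$ available lines. You offer two escape routes --- a hoped-for sharpening and a finite enumeration --- but carry out neither, so the statement remains unproved precisely for those three primes, which are part of the hypothesis $p\geq5$ and are the whole reason this theorem does not follow from the method of the Main Theorem. The paper closes this by a case analysis on the configuration of the four lines $G_{i}=\langle\phi(h_{i})\rangle$: if two of the $\phi(h_{i})$ vanish then $\#D\leq5<p+1$; if exactly one vanishes, a coset-number argument either reduces to the previous case or contradicts $\langle\phi(H)\rangle=(\Z/p\Z)^{2}$; if two or three of the lines coincide, the elements of $D$ outside the common line number at most $6$ (at most $4$ when $p=5$), so a good line exists; and in the pairwise-transverse case a further argument with specific order-$2p$ subgroups $B$ either shows $D\subseteq G_{1}\cup G_{2}\cup G_{3}\cup G_{4}$ (only four lines, so again a good line exists) or reaches a direct contradiction. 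This last subcase is the longest step of the proof and is not a routine count; your proposed normalization-plus-enumeration for $\Z/10\Z\oplus\Z/10\Z$, $\Z/14\Z\oplus\Z/14\Z$ and $\Z/22\Z\oplus\Z/22\Z$ would be a legitimate but, as written, unexecuted substitute for it.
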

\begin{proof} The assumptions on the group $A$ imply that either $A\cong\mathbb{Z}/2\mathbb{Z}\oplus\mathbb{Z}/2p^2\mathbb{Z}$ or $A\cong\mathbb{Z}/2p\mathbb{Z}\oplus\mathbb{Z}/2p\mathbb{Z}$. If $A\cong\mathbb{Z}/2\mathbb{Z}\oplus\mathbb{Z}/2p^2\mathbb{Z}$, then $2A\cong\mathbb{Z}/p^2\mathbb{Z}$ which is a cyclic group with odd order. By Theorem \ref{thm:911}, $A$ does not contain a nonseparating set. So we may, and do, assume that $A=\mathbb{Z}/2\mathbb{Z}\oplus\mathbb{Z}/2\mathbb{Z}\oplus\mathbb{Z}/p\mathbb{Z}\oplus\mathbb{Z}/p\mathbb{Z}$. The rest of the proof is by contradiction. Suppose that $A$ contains a nonseparating subset $H=H_{1}\coprod H_{2}\coprod H_{3}\coprod H_{4}$. Here each $H_{i}$ has the form $H_{i}=\{\pm h_{i}\}$. Then either $\mathbb{Z}/p\mathbb{Z}\oplus\mathbb{Z}/p\mathbb{Z}\subseteq\langle H\rangle$ or $(\mathbb{Z}/p\mathbb{Z}\oplus\mathbb{Z}/p\mathbb{Z})\cap(A\setminus \langle H\rangle)\neq\emptyset$.

\textbf{Case 1.} $\mathbb{Z}/p\mathbb{Z}\oplus\mathbb{Z}/p\mathbb{Z}\subseteq\langle H\rangle$. There are in total $p+1$ subgroups of order $p$. Let $\psi:A\to\mathbb{Z}/2\mathbb{Z}\oplus\mathbb{Z}/2\mathbb{Z}$ and $\phi:A\to\mathbb{Z}/p\mathbb{Z}\oplus\mathbb{Z}/p\mathbb{Z}$ be the canonical projections defined by $\psi(a,b,c,d)=(a,b)$ and $\phi(a,b,c,d)=(c,d)$ respectively and consider the set of differences $D:=\{\phi(h_{i})\pm\phi(h_{j}): i,j\in\{1,2,3,4\}\ \text{and}\ i<j\}$. We begin by proving the theorem under the assumption that there exists a subgroup $G$ of $\zp$ with order $p$ such that $G\cap D\subseteq\{0\}$.

Suppose there exists a subgroup $G$ of $\zp$ with order $p$ such that $G\cap D\subseteq\{0\}$. It is clear that $(\Z/p\Z\oplus\Z/p\Z)/G$ is cyclic. By Lemma \ref{lem:a6}, we may, and do, assume that $\langle\phi(h_{1}),\phi(h_{2})\rangle=\Z/p\Z\oplus\Z/p\Z$ and  $h_{2}$, $h_{3}$ and $h_{4}$ all differ by an element of order 2. Choose $B$ so that it contains $h_{2}$. Choose $a$ arbitrarily such that $a+B$ generates $A/B$.  Then $h_{2}+B$, $h_{3}+B$, $h_{4}+B$ lie in the subgroup of order 2 in $A/B$.  They are not all equal because $B$ does not contain three elements of order 2.  So either $c_{1}=c_{2}=0$ and $c_{4}=p$ or $c_{1}=0$ and $c_{3}=c_{4}=p$.  Since $h_{1}+B$ does not have order 2 in $A/B$, we finally have a contradiction.

Now the argument separates into cases. In every case we obtain either a contradiction or a subgroup $G$ of $\zp$ with order $p$ such that $G\cap D\subseteq\{0\}$. By the above, this suffices to prove the theorem.
Let $\psi$ and $\phi$ as above and for each $i\in\{1,2,3,4\}$ set $G_{i}:=\langle\phi(h_{i})\rangle$.

\textbf{Subcase 1} Two of the elements $\phi(h_{1}), \phi(h_{2}), \phi(h_{3}), \phi(h_{4})$ are $0$. Then $D$ contains at most five elements. Since $p$ is prime and $p\geq5$, $\zp$ contains at least six subgroups with order $p$, so there exists a subgroup $G$ of $\zp$ with order $p$ such that $G\cap D\subseteq\{0\}$. This handles the subcase in which two of the elements $\phi(h_{1}), \phi(h_{2}), \phi(h_{3}), \phi(h_{4})$ are $0$.

\textbf{Subcase 2} One of the elements $\phi(h_{1}), \phi(h_{2}), \phi(h_{3}), \phi(h_{4})$ is $0$. Without loss of generality $\phi(h_{1})=0$. By translating $H$ by an element of order 2 if necessary as in Lemma \ref{lem:55}, we may assume that $h_{1}=0$. Now we choose $B$ so that it contains $h_{2}$. Then $c_{1}=c_{2}=0$. So $c_{3}=0$. Without loss of generality $h_{3}\in B$. Next choose $B'$ so that it contains $h_{4}$. Then $c'_{1}=c'_{2}=0$. So $c'_{3}=0$. Without loss of generality $h_{3}\in B'$. If $\phi(h_{3})=0$, then subcase 2 reduces to subcase 1. If $\phi(h_{3})\neq0$, then $\phi(h_{1}), \phi(h_{2}), \phi(h_{3}), \phi(h_{4})\in\langle h_{3}\rangle$. This contradicts the assumption that $\mathbb{Z}/p\mathbb{Z}\oplus\mathbb{Z}/p\mathbb{Z}\subseteq\langle H\rangle$. This handles the subcase in which one of the elements $\phi(h_{1}), \phi(h_{2}), \phi(h_{3}), \phi(h_{4})$ is $0$.

\textbf{Subcase 3} $G_{1}=G_{2}=G_{3}\neq\{0\}$. If $p=5$, then the elements $\pm\phi(h_{1}),  \pm\phi(h_{2})$ and $\pm\phi(h_{3})$ are not distinct. Hence $D\setminus G_{1}$ contains at most 4 elements. So there exists a subgroup $G$ of $\mathbb{Z}/5\mathbb{Z}\oplus\mathbb{Z}/5\mathbb{Z}$ with order $5$ such that $G\cap D\subseteq\{0\}$. If $p\geq7$, then $\zp$ contains at least 8 distinct subgroups of order $p$. Since $D\setminus G_{1}$ contains at most 6 elements, there exists a subgroup $G$ of $\zp$ with order $p$ such that $G\cap D\subseteq\{0\}$.

\textbf{Subcase 4} $G_{1}=G_{2}\neq\{0\}$. By translating $H$ by an element of order 2 if necessary, we may assume that $h_{1}$ has order $p$. Then $h_{1}$ and $h_{2} $ are both contained in a subgroup $B$ of $A$ with order $2p$. So $c_{1}=c_{2}=0$. So $c_{3}=0$. So either $G_{3}=G_{1}$ or $G_{4}=G_{1}$. Thus subcase 4 reduces to subcase 3.

\textbf{Subcase 5} $G_{i}\cap G_{j}=\{0\}$ for $i\neq j$. By subcase 2 we may assume that none of the elements $\phi(h_{1}), \phi(h_{2}), \phi(h_{3}), \phi(h_{4})$ is zero. Suppose for every $i\in\{1,2,3,4\}$ that there are three choices of indices $j$ and $k$ with $j<k$ and a sign such that the elements $\phi(h_{j})\pm\phi(h_{k})$ is in $G_{i}$. Then every element of $D$ is in $G_{1}\cup G_{2}\cup G_{3}\cup G_{4}$. So there exists a subgroup $G$ of $\zp$ with order $p$ such that $G\cap D\subseteq\{0\}$.

So we may assume that there are not three choices of indices $j$ and $k$ with $j<k$ and a sign such that the element $\phi(h_{j})\pm\phi(h_{k})$ is in $G_{1}$. By translating $H$ by an element of order 2 if necessary, we may assume that $h_{1}$ has order $p$.

In this paragraph we assume that $h_{1}, h_{2}, h_{3}, h_{4}$ all have order $p$ and obtain a contradiction. For this, let $B$ be a subgroup of $A$ with order $2p$ which contains $h_{1}$. Since $G_{1}\cap G_{2}=\{0\}$ and $h_{2}$ has order $p$, we may choose $a\in A$ such that $a+B$ generates $A/B$ and $h_{2}+B=2a+B$. We have that $c_{1}=0$. Furthermore, $c_{2}\neq0$ because $G_{1}\cap G_{j}=\{0\}$ for $j\in\{2,3,4\}$. In addition, $c_{2}\neq1$ because $h_{i}$ has order $p$ for every $i$. So $c_{2}=2$ because $h_{2}+B=2a+B$. So $c_{3}=2$. Without loss of generality $h_{2}+B=h_{3}+B$. Hence $G_{1}$ contains $\phi(h_{2})-\phi(h_{3})$. Now we repeat this argument with $h_{4}$ instead of $h_{2}$. We conclude that $G_{1}$ contains one of the elements $\phi(h_{4})\pm\phi(h_{2})$ or $\phi(h_{4})\pm\phi(h_{3})$. It follows that there are two and hence three choices of $j$ and $k$ with $j<k$ and signs such that $G_{1}$ contains $\phi(h_{j})\pm\phi(h_{k})$. This contradiction shows that $h_{1}, h_{2}, h_{3}$ and $h_{4}$ do not  all have order $p$. 

So we may assume that $h_{2}$ has order $2p$. Equivalently, $\psi(h_{2})\neq0$. There are two subgroups $B$ and $B'$ of $A$ with order $2p$ which contain $h_{1}$ but not $\psi(h_{2})$. Then $a=h_{2}$ is an element of $A$ such that $a+B$ generates $A/B$. Now $c_{1}=0$ and $c_{2}=1$. So $c_{3}=1$. Without loss of generality $h_{2}+B=h_{3}+B$. So $h_{2}-h_{3}\in B$. Similarly, $h_{2}\pm h_{k}\in B'$ for some $k\in\{3,4\}$. If $h_{2}+h_{3}\in B'$, then $\phi(h_{2})\pm\phi(h_{3})\in G_{1}$, whence $\phi(h_{2}), \phi(h_{3})\in G_{1}$. This is impossible because $G_{i}\cap G_{j}=\{0\}$ for $i\neq j$. If $h_{2}\pm h_{4}\in B'$, then as in the previous paragraph it follows that there are three choices of $j$ and $k$ with $j<k$ and signs such that $G_{1}$ contains $\phi(h_{j})\pm\phi(h_{k})$, which is impossible. So $h_{2}-h_{3}\in B\cap B'=G_{1}$. It follows that $\psi(h_{2})=\psi(h_{3})\neq0$. If $\psi(h_{4})\neq0$, then we repeat this argument with $h_{4}$ instead $h_{2}$ and find that there are three choices of $j$ and $k$ with $j<k$ and signs such that $G_{1}$ contains $\phi(h_{j})\pm\phi(h_{k})$, which is impossible.

We are left with the case in which $\psi(h_{1})=\psi(h_{4})=0$ and $\psi(h_{2})=\psi(h_{3})\neq0$. Now we choose $B$ to be the subgroup of $A$ with order $2p$ such that $\psi(B)$ does not contain $\psi(h_{2})$ and $\phi(B)$ does not contain any of the four elements $\phi(h_{1})\pm\phi(h_{4})$ or $\phi(h_{2})\pm\phi(h_{3})$. Regardless of how a generator of $A/B$ is chosen, $c_{2}=c_{3}$. So $B$ contains an element of the form $h_{i}\pm h_{j}$. Considering $\psi(B)$ shows that either $i, j\in\{1,4\}$ or $i,j\in\{2,3\}$. Considering $\phi(B)$ now yields a contradiction. 

This completes the subcase in which $G_{i}\cap G_{j}=\{0\}$ for $i\neq j$ and therefore this completes case 1.

\textbf{Case 2.} $(\mathbb{Z}/p\mathbb{Z}\oplus\mathbb{Z}/p\mathbb{Z})\cap(A\setminus \langle H\rangle)\neq\emptyset$. This means that $\langle H\rangle$ does not contain a copy of $\mathbb{Z}/p\mathbb{Z}\oplus\mathbb{Z}/p\mathbb{Z}$. Then, $H\subset\langle H\rangle\subseteq A'$, where $A'$ is a subgroup of $A$ isomorphic to $\mathbb{Z}/2\mathbb{Z}\oplus\mathbb{Z}/2\mathbb{Z}\oplus\mathbb{Z}/p\mathbb{Z}$. By Lemma \ref{lem:a05}, $H$ is a nonseparating set for $A'$. This contradicts Theorem \ref{thm:911}. 

This completes case 2 and therefore this completes the proof of the theorem.
\end{proof}

\begin{cor}\label{cor:58}
For any prime $p\geq5$ there does not exist a NET map with degree $p^2$ whose Teichm\"uller map is constant. 
\end{cor}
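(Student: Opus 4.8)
The plan is to derive this statement immediately from Theorem \ref{thm:57}, using the dictionary between NET maps and nonseparating subsets recorded in Theorem \ref{thm:55}. Suppose, for contradiction, that $f$ is a NET map of degree $p^{2}$ whose Teichm\"uller map is constant. Associate to $f$ a quintuple $(\Lambda_{1},\Phi,R,h_{R},h)$ together with lattices $\Lambda_{1}<\Lambda_{2}$ as in Section 3, so that $\deg(f)=[\Lambda_{2}:\Lambda_{1}]=p^{2}$, and set $A:=\Lambda_{2}/2\Lambda_{1}$.

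First I would verify that $A$ satisfies the hypotheses of Theorem \ref{thm:57}. Since $\Lambda_{2}\cong\Z^{2}$ is generated by two elements, so is its quotient $A$. The order is $|A|=[\Lambda_{2}:2\Lambda_{1}]=[\Lambda_{2}:\Lambda_{1}]\,[\Lambda_{1}:2\Lambda_{1}]=p^{2}\cdot4=4p^{2}$. Finally, because $2\Lambda_{1}\subseteq 2\Lambda_{2}\subseteq\Lambda_{2}$, we have $2A=2\Lambda_{2}/2\Lambda_{1}$ and therefore $A/2A\cong\Lambda_{2}/2\Lambda_{2}\cong\Z/2\Z\oplus\Z/2\Z$. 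Thus $A$ is a finite Abelian group generated by two elements with $A/2A\cong\Z/2\Z\oplus\Z/2\Z$ and $|A|=4p^{2}$, with $p$ prime and $p\geq5$.

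By Theorem \ref{thm:55}, the constancy of the Teichm\"uller map of $f$ is equivalent to $p_{1}^{-1}(P_{f})$ being a nonseparating subset of $\Lambda_{2}/2\Lambda_{1}=A$. But Theorem \ref{thm:57} asserts that such an $A$ contains no nonseparating subset, a contradiction. Hence no NET map of degree $p^{2}$ has constant Teichm\"uller map.

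The only point requiring care is the lattice bookkeeping, namely the index identity $[\Lambda_{1}:2\Lambda_{1}]=4$ and the isomorphism $A/2A\cong\Z/2\Z\oplus\Z/2\Z$; both are standard facts about rank-two lattices and constitute no genuine obstacle. All the substance of the statement is already carried by Theorem \ref{thm:57}.
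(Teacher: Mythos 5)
Your proposal is correct and is exactly the argument the paper intends: the corollary is stated as an immediate consequence of Theorem \ref{thm:57} via the dictionary of Theorem \ref{thm:55}, with $A=\Lambda_{2}/2\Lambda_{1}$ of order $4\deg(f)=4p^{2}$. Your lattice bookkeeping ($[\Lambda_{1}:2\Lambda_{1}]=4$ and $A/2A\cong\Z/2\Z\oplus\Z/2\Z$) is accurate, so there is nothing to add.
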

\begin{cor}
Let $p$ be a prime integer with $p\geq5$. Then $A=\Z/p\Z\oplus\Z/p\Z$ does not contain a nonseparating set.
\end{cor}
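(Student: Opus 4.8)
The plan is to reduce this to Theorem \ref{thm:57} by embedding $\Z/p\Z\oplus\Z/p\Z$ into a suitable ambient group to which that theorem applies. Note that Theorem \ref{thm:57} cannot be invoked on $A=\Z/p\Z\oplus\Z/p\Z$ directly: since $p$ is odd we have $2A=A$, so $A/2A$ is trivial rather than $\Z/2\Z\oplus\Z/2\Z$, and moreover $|A|=p^{2}\neq 4p^{2}$. So instead I would work with the larger group $\widehat{A}:=\Z/2\Z\oplus\Z/2\Z\oplus\Z/p\Z\oplus\Z/p\Z$.

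First I would record the elementary facts about $\widehat{A}$ needed to apply Theorem \ref{thm:57}: since $p$ is odd, $\widehat{A}\cong\Z/2p\Z\oplus\Z/2p\Z$, so $\widehat{A}$ is generated by two elements; $\widehat{A}/2\widehat{A}\cong\Z/2\Z\oplus\Z/2\Z$; and $|\widehat{A}|=4p^{2}$ with $p$ prime and $p\geq5$. Hence Theorem \ref{thm:57} applies and tells us that $\widehat{A}$ contains no nonseparating subset.

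Next I would argue by contradiction. Suppose $A=\Z/p\Z\oplus\Z/p\Z$ contains a nonseparating subset $H$. The subgroup $\{0\}\oplus\{0\}\oplus\Z/p\Z\oplus\Z/p\Z$ of $\widehat{A}$ is isomorphic to $A$, so we may regard $H$ as a subset of this subgroup which is nonseparating for it. By Lemma \ref{lem:56}, $H$ is then a nonseparating subset of $\widehat{A}$, contradicting the conclusion of the previous paragraph. Therefore $A$ contains no nonseparating set.

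There is no real obstacle in this argument; the only points requiring care are verifying that $\widehat{A}$ satisfies all three hypotheses of Theorem \ref{thm:57} (two generators, $2$-torsion quotient isomorphic to $\Z/2\Z\oplus\Z/2\Z$, and order $4p^{2}$), and noting that the inclusion $A\hookrightarrow\widehat{A}$ is exactly the setup in which Lemma \ref{lem:56} propagates the nonseparating property from a subgroup to the ambient group.
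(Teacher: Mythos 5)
Your proof is correct and is exactly the argument the paper intends: embed $\Z/p\Z\oplus\Z/p\Z$ into $\Z/2\Z\oplus\Z/2\Z\oplus\Z/p\Z\oplus\Z/p\Z\cong\Z/2p\Z\oplus\Z/2p\Z$, apply Theorem \ref{thm:57} to the ambient group, and use Lemma \ref{lem:56} to derive the contradiction. Your explicit verification of the hypotheses of Theorem \ref{thm:57} for the ambient group is a welcome addition, since the paper leaves the corollary unproved.
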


Our second nonexistence result is the following. 

\begin{thm}\label{thm:59} Let $A$ be a finite Abelian group generated by two elements such that $A/2A\cong\mathbb{Z}/2\mathbb{Z}\oplus\mathbb{Z}/2\mathbb{Z}$. If $|A|=4p^3$ with $p$ prime and $p\geq 7$, then $A$ does not contain a nonseparating subset.
\end{thm}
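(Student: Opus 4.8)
The plan is to prove this by mirroring the proof of Theorem~\ref{thm:57}, with the odd part of $A$ now equal to $\mathbb{Z}/p\mathbb{Z}\oplus\mathbb{Z}/p^2\mathbb{Z}$ rather than $\mathbb{Z}/p\mathbb{Z}\oplus\mathbb{Z}/p\mathbb{Z}$. First I would classify $A$: since $A$ is generated by two elements and $A/2A\cong\mathbb{Z}/2\mathbb{Z}\oplus\mathbb{Z}/2\mathbb{Z}$, its odd part is generated by two elements and has order $p^3$, so either $A\cong\mathbb{Z}/2\mathbb{Z}\oplus\mathbb{Z}/2p^3\mathbb{Z}$ or $A\cong\mathbb{Z}/2\mathbb{Z}\oplus\mathbb{Z}/2\mathbb{Z}\oplus\mathbb{Z}/p\mathbb{Z}\oplus\mathbb{Z}/p^2\mathbb{Z}$. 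In the first case $2A\cong\mathbb{Z}/p^3\mathbb{Z}$ is cyclic of odd order, so Theorem~\ref{thm:911} applies directly; thus I may assume $A=\mathbb{Z}/2\mathbb{Z}\oplus\mathbb{Z}/2\mathbb{Z}\oplus\mathbb{Z}/p\mathbb{Z}\oplus\mathbb{Z}/p^2\mathbb{Z}$ and argue by contradiction, assuming $H=\coprod_{i=1}^{4}\{\pm h_{i}\}$ is a nonseparating subset. Let $\phi:A\to\mathbb{Z}/p\mathbb{Z}\oplus\mathbb{Z}/p^2\mathbb{Z}$ be the canonical projection; I would then split into two cases according to whether $\langle\phi(H)\rangle$ equals $\mathbb{Z}/p\mathbb{Z}\oplus\mathbb{Z}/p^2\mathbb{Z}$ or is a proper subgroup.

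If $\langle\phi(H)\rangle$ is a proper subgroup, it is cyclic or isomorphic to $\mathbb{Z}/p\mathbb{Z}\oplus\mathbb{Z}/p\mathbb{Z}$; since $\phi$ restricts to an isomorphism on odd parts, $\langle H\rangle$ lies in a subgroup $A'\le A$ with $A'\cong\mathbb{Z}/2\mathbb{Z}\oplus\mathbb{Z}/2p^{k}\mathbb{Z}$ for some $k\le 2$ in the cyclic case, and $A'\cong\mathbb{Z}/2p\mathbb{Z}\oplus\mathbb{Z}/2p\mathbb{Z}$ otherwise, and in either case $A'$ is generated by two elements. By Lemma~\ref{lem:a05}, $H$ is nonseparating for $A'$; the first possibility contradicts Theorem~\ref{thm:911} (since $2A'$ is cyclic of odd order), and the second contradicts Theorem~\ref{thm:57} (since $|A'|=4p^2$ and $p\ge 7\ge 5$). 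Hence the substantive case is $\langle\phi(H)\rangle=\mathbb{Z}/p\mathbb{Z}\oplus\mathbb{Z}/p^2\mathbb{Z}$.

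In that case I would set $D=\{\phi(h_{i})\pm\phi(h_{j}):i<j\}$, a set of at most twelve elements, and first dispose of the theorem under the extra assumption that there is a cyclic subgroup $G$ of $\mathbb{Z}/p\mathbb{Z}\oplus\mathbb{Z}/p^2\mathbb{Z}$ with cyclic quotient and $G\cap D\subseteq\{0\}$. Lemma~\ref{lem:a6} then permits the assumption that $\langle\phi(h_{1}),\phi(h_{2})\rangle=\mathbb{Z}/p\mathbb{Z}\oplus\mathbb{Z}/p^2\mathbb{Z}$ and that $\phi(h_{2})=\phi(h_{3})=\phi(h_{4})$ with $h_{2},h_{3},h_{4}$ differing pairwise by elements of order $2$ (indeed by the three nonzero elements of order $2$, since the pairs $\{\pm h_{i}\}$ are disjoint). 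I would then pick a cyclic $B\le A$ with $A/B$ cyclic, $h_{2}\in B$, and $\phi(B)\not\ni\phi(h_{1})$: such a $B$ exists because $\phi(h_{2})$ lies in a cyclic subgroup of $\mathbb{Z}/p\mathbb{Z}\oplus\mathbb{Z}/p^2\mathbb{Z}$ with cyclic quotient and missing $\phi(h_{1})$ (when $\phi(h_{2})$ has order $p^2$, take the one it generates; when $\phi(h_{2})$ has order $p$, take an order-$p^2$ cyclic subgroup other than $\langle\phi(h_{1})\rangle$), and this subgroup lifts to $A$ after adjoining a suitable element of order $2$. For any generator $a+B$ of $A/B$, the elements $h_{2}+B,h_{3}+B,h_{4}+B$ lie in the order-$2$ subgroup of $A/B$ and are not all equal (a cyclic group has at most one element of order $2$), while $h_{1}+B$ has order greater than $2$ (else $2h_{1}\in B$, forcing $\phi(h_{1})\in\phi(B)$ since $\phi(B)$ has odd order); exactly as in Theorem~\ref{thm:57}, this forces either $c_{2}=0<c_{3}$ or $c_{2}<c_{3}=|A/B|/2$, contradicting that $H$ is nonseparating.

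It remains to produce such a $G$, or a direct contradiction, in all configurations, and this is where the hard work lies. The cyclic subgroups of $\mathbb{Z}/p\mathbb{Z}\oplus\mathbb{Z}/p^2\mathbb{Z}$ with cyclic quotient are exactly $p$ of order $p^2$ and $p$ of order $p$, and the crucial new phenomenon is that all $p$ of order $p^2$ contain the common subgroup $N=p(\mathbb{Z}/p\mathbb{Z}\oplus\mathbb{Z}/p^2\mathbb{Z})$ of order $p$. If $D\cap N=\{0\}$, then each element of $D$ lies in at most one of these $2p$ subgroups, so some one of them is disjoint from $D$; this is precisely where the hypothesis $p\ge 7$ enters, through $2p>12$. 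If $D\cap N\ne\{0\}$, I would reduce modulo $p$ and run a subcase analysis on the coincidence pattern of the cyclic subgroups $\langle\phi(h_{i})\rangle$, paralleling Subcases~1--5 of Theorem~\ref{thm:57}: when enough of the $\phi(h_{i})$ coincide or vanish, $H$ --- after translation by an element of order $2$ as in Lemma~\ref{lem:55} --- is contained in a proper subgroup and the previous case applies, while the remaining configurations are eliminated by direct coset-number computations. I expect this last step to be the main obstacle: the denser subgroup lattice of $\mathbb{Z}/p\mathbb{Z}\oplus\mathbb{Z}/p^2\mathbb{Z}$, and in particular the fact that a single element of $D$ inside $N$ destroys all $p$ of the order-$p^2$ subgroups at once, makes the bookkeeping substantially heavier than for Theorem~\ref{thm:57}, and making it work for the boundary primes $p=7$ and $p=11$ (where $p-1<12$) is the delicate point.
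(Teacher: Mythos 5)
Your overall architecture follows the paper's: the same classification of $A$, the same reduction of the degenerate case to Theorem~\ref{thm:911} and (via Lemma~\ref{lem:a05}) to Theorem~\ref{thm:57}, and the same use of $D$, Lemma~\ref{lem:a6}, and a coset-number contradiction once a suitable $G$ is in hand. The genuine gap is at the decisive step: producing a cyclic $G\le\Z/p\Z\oplus\Z/p^2\Z$ with cyclic quotient and $G\cap D\subseteq\{0\}$. Your counting argument ($2p>12$) covers only the situation $D\cap N=\{0\}$; when $D$ meets $N=\langle(0,p)\rangle$ nontrivially you offer only a hoped-for subcase analysis and yourself flag it as the main obstacle, particularly for $p=7,11$. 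The paper closes exactly this hole, and more cheaply than you anticipate: it restricts the candidates to the $p$ order-$p$ cyclic subgroups $G_0,\dots,G_{p-1}$ of the $p$-torsion subgroup $\Z/p\Z\oplus\langle p\rangle$ having cyclic quotient (these pairwise intersect trivially, and none is $N$), and shows $D$ meets $\Z/p\Z\oplus\langle p\rangle$ in at most six nonzero elements. That count rests on two observations: some $\phi(h_i)$, say $\phi(h_1)$, must have order $p^2$ (otherwise $H$ lies in a subgroup isomorphic to $\Z/2p\Z\oplus\Z/2p\Z$ and Lemma~\ref{lem:a05} with Theorem~\ref{thm:57} finishes); and whenever $\phi(h_j)$ or $\phi(h_k)$ has order $p^2$, at most one of $\phi(h_j)\pm\phi(h_k)$ can be $p$-torsion. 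Since $p\ge7>6$, some $G_i$ misses $D$. Without this (or an equivalent) argument your proof is incomplete in precisely the case you identify as hard.

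Two smaller points. First, after Lemma~\ref{lem:a6} the paper does not choose $B$ inside $A$ directly; it embeds $A$ into $\tilde A=\Z/2\Z\oplus\Z/2\Z\oplus\Z/p^2\Z\oplus\Z/p^2\Z$ via Lemma~\ref{lem:56}, where $i_c(h_2)$ always lies in a cyclic subgroup of order $2p^2$ with cyclic quotient. Your direct construction of $B$ can be made to work, but your justification fails in one subcase: if $\phi(h_2)$ has order $p$ and lies outside $\langle(0,p)\rangle$, then $\phi(h_2)$ is contained in \emph{no} cyclic subgroup of order $p^2$, so ``take an order-$p^2$ cyclic subgroup other than $\langle\phi(h_1)\rangle$'' is not available; you must take $\langle\phi(h_2)\rangle$ itself, which does have cyclic quotient and cannot contain $\phi(h_1)$ because $\langle\phi(h_1),\phi(h_2)\rangle$ is noncyclic. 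Second, your handling of the proper-subgroup case and of the coset-number endgame is correct and essentially identical to the paper's.
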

\begin{proof} The assumptions on the group $A$ imply that either $A\cong\mathbb{Z}/2\mathbb{Z}\oplus\mathbb{Z}/2p^3\mathbb{Z}$ or $A\cong\mathbb{Z}/2p\mathbb{Z}\oplus\mathbb{Z}/2p^2\mathbb{Z}$.
If $A\cong\mathbb{Z}/2\mathbb{Z}\oplus\mathbb{Z}/2p^3\mathbb{Z}$, then $2A\cong\mathbb{Z}/p^3\mathbb{Z}$ which is a cyclic group with odd order. By Theorem \ref{thm:911}, $A$ does not contain a nonseparating set. So we may, and do, assume that $A=\mathbb{Z}/2\mathbb{Z}\oplus\mathbb{Z}/2\mathbb{Z}\oplus\mathbb{Z}/p\mathbb{Z}\oplus\mathbb{Z}/p^2\mathbb{Z}$. The rest of the proof is by contradiction. Suppose that $A$ contains a nonseparating subset $H=H_{1}\coprod H_{2}\coprod H_{3}\coprod H_{4}$. Here each $H_{i}$ has the form $H_{i}=\{\pm h_{i}\}$. Then either $\mathbb{Z}/p\mathbb{Z}\oplus\mathbb{Z}/p^2\mathbb{Z}\subseteq\langle H\rangle$
 or $(\mathbb{Z}/p\mathbb{Z}\oplus\mathbb{Z}/p^2\mathbb{Z})\cap(A\setminus \langle H\rangle)\neq\emptyset$.
 
\textbf{Case 1.} $\mathbb{Z}/p\mathbb{Z}\oplus\mathbb{Z}/p^2\mathbb{Z}\subseteq\langle H\rangle$. The proof is similar to the proof of Case 1 of Theorem \ref{thm:57}. Let $\phi:A\to\mathbb{Z}/p\mathbb{Z}\oplus\mathbb{Z}/p^2\mathbb{Z}$ be the canonical projection and consider the set of differences $D:=\{\phi(h_{i})\pm\phi(h_{j}): i,j\in\{1,2,3,4\}\ \text{and}\ i<j\}$. We begin by proving the theorem under the assumption that there exists a cyclic subgroup $G$ of $\Z/p\Z\oplus\Z/p^2\Z$ such that $G\cap D\subseteq\{0\}$ and $(\Z/p\Z\oplus\Z/p^2\Z)/G$ is cyclic.

Suppose there exists a cyclic subgroup $G$ of $\Z/p\Z\oplus\Z/p^2\Z$ such that $G\cap D\subseteq\{0\}$ and $(\Z/p\Z\oplus\Z/p^2\Z)/G$ is cyclic. By Lemma \ref{lem:a6}, we may, and do, assume that $\langle\phi(h_{1}),\phi(h_{2})\rangle=\Z/p\Z\oplus\Z/p^2\Z$ and  $h_{2}$, $h_{3}$ and $h_{4}$ all differ by an element of order 2. Set $\tilde{A}:=\mathbb{Z}/2\mathbb{Z}\oplus\mathbb{Z}/2\mathbb{Z}\oplus\Z/p^2\Z\oplus\Z/p^2\Z$ and let $i_{c}:A\to\tilde{A}$ be the canonical monomorphism defined by $i_{c}(x,y,z,t)=(x,y,pz,t)$. By Lemma \ref{lem:56}, $i_{c}(H)$ is a nonseparating subset of $\tilde{A}$. Now, set $\mu:=i_{c}(\phi(h_{1}))$ and $\nu:=i_{c}(\phi(h_{2}))$. Since $\langle\phi(h_{1}),\phi(h_{2})\rangle=\Z/p\Z\oplus\Z/p^2\Z$, $\langle \mu,\nu\rangle=i_{c}(\Z/p\Z\oplus\Z/p^2\Z)=\langle p\rangle\oplus\Z/p^2\Z$. Furthermore, $i_{c}(h_{2})$, $i_{c}(h_{3})$ and $i_{c}(h_{4})$ all differ by an element of order 2. Since $\nu\in\Z/p^2\Z\oplus\Z/p^2\Z$ there exists a cyclic subgroup $\tilde{B}$ of $\tilde{A}$ of order $2p^2$ such that $i_{c}(h_{2})\in\tilde{B}$ and $\tilde{A}/\tilde{B}$ is cyclic. Then $i_{c}(h_{2})+\tilde{B}$, $i_{c}(h_{3})+\tilde{B}$, $i_{c}(h_{4})+\tilde{B}$ lie in the subgroup of order 2 in $\tilde{A}/\tilde{B}$. They are not all equal because $\tilde{B}$ does not contain three elements of order 2.  So either $c_{1}=c_{2}=0$ and $c_{4}=p^2$ or $c_{1}=0$ and $c_{3}=c_{4}=p^2$. Thus,  $i_{c}(h_{1})+\tilde{B}$ must have order 2 in $\tilde{A}/\tilde{B}$. Hence $2\mu\in\tilde{B}$ and so $\mu\in\tilde{B}$. Then $\langle\mu,\nu\rangle\subseteq\tilde{B}$, which is impossible because $\tilde{B}$ is cyclic and $\langle\mu,\nu\rangle=\langle p\rangle\oplus\Z/p^2\Z$.

Now the argument separates into cases. In every case we obtain a cyclic subgroup $G$ of $\Z/p\Z\oplus\langle p\rangle$ such that $G\cap D\subseteq\{0\}$ and $(\Z/p\Z\oplus\Z/p^2\Z)/G$ is cyclic. By the above, this suffices to prove the theorem.

First of all, note that there exists $h_{i}\in H$ so that $\phi(h_{i})\notin\Z/p\Z\oplus\langle p\rangle$. Otherwise, by Lemma \ref{lem:a05}, $H$ is a nonseparating subset of $\Z/2\Z\oplus\Z/2\Z\oplus Z/p\Z\oplus\langle p\rangle$, which contradicts Theorem \ref{thm:57}. So without loss of generality, we assume that $\phi(h_{1})\notin\Z/p\Z\oplus\langle p\rangle$. Also note that $\Z/p\Z\oplus\langle p\rangle$ contains $p$ distinct cyclic subgroups $G_{0},G_{1},\cdots, G_{p-1}$ such that $(\Z/p\Z\oplus\Z/p^2\Z)/G_{i}$ is cyclic for $i\in\{0,1,\cdots,p-1\}$. For instance, consider $G_{0}=\langle (1,0)\rangle$,  $G_{i}=\langle (i,p)\rangle$, where $i\in\{1,2,\cdots, p-1\}$.

\textbf{Subcase 1} $\phi(h_{i})\notin\Z/p\Z\oplus\langle p\rangle$ for all $i\in\{2,3,4\}$. Then, for any choice of indeces $j$ and $k$ with $j<k$, $\phi(h_{j})+\phi(h_{k})$ and $\phi(h_{j})-\phi(h_{k})$ cannot be both elements of $\Z/p\Z\oplus\langle p\rangle$. So $\Z/p\Z\oplus\langle p\rangle$ contains at most six elements of $D$. 

\textbf{Subcase 2} $\phi(h_{2}),\phi(h_{3})\notin\Z/p\Z\oplus\langle p\rangle$ but $\phi(h_{4})\in\Z/p\Z\oplus\langle p\rangle$. Then, for any choice of indeces $j$ and $k$ with $1\leq j<k\leq3$, $\phi(h_{j})+\phi(h_{k})$ and $\phi(h_{j})-\phi(h_{k})$ cannot be both elements of $\Z/p\Z\oplus\langle p\rangle$. Also, $\phi(h_{i})\pm\phi(h_{4})\notin\Z/p\Z\oplus\langle p\rangle$ for $i\in\{1,2,3\}$. So $\Z/p\Z\oplus\langle p\rangle$ contains at most three elements of $D$. 

\textbf{Subcase 3} $\phi(h_{2})\notin\Z/p\Z\oplus\langle p\rangle$ but $\phi(h_{3}),\phi(h_{4})\in\Z/p\Z\oplus\langle p\rangle$. Then $\phi(h_{1})\pm \phi(h_{2})$ cannot be both elements of $\Z/p\Z\oplus\langle p\rangle$ and $\phi(h_{i})\pm\phi(h_{j})\notin\Z/p\Z\oplus\langle p\rangle$ for $i\in\{1,2\}$ and $j\in\{3,4\}$. So $\Z/p\Z\oplus\langle p\rangle$ contains at most three elements of $D$. 

\textbf{Subcase 4} $\phi(h_{2}),\phi(h_{3}),\phi(h_{4})\in\Z/p\Z\oplus\langle p\rangle$. It follows that there are at most six elements of $D$ in $\Z/p\Z\oplus\langle p\rangle$. 

In any case, since $p$ is prime and $p\geq7$, there exists a cyclic subgroup $G$ of $\Z/p\Z\oplus\langle p\rangle$ such that $G\cap D\subseteq\{0\}$ and $(\Z/p\Z\oplus\Z/p^2\Z)/G$ is cyclic. This completes case 1.

\textbf{Case 2.} $(\mathbb{Z}/p\mathbb{Z}\oplus\mathbb{Z}/p^2\mathbb{Z})\cap(A\setminus \langle H\rangle)\neq\emptyset$. This means that $\langle H\rangle$ does not contain a copy of $\mathbb{Z}/p\mathbb{Z}\oplus\mathbb{Z}/p^2\mathbb{Z}$. Then, $H\subset\langle H\rangle\subseteq A'$, where $A'$ is a subgroup of $A$ generated by 2 elements whose order is $4p^2$. By Lemma \ref{lem:a05}, $H$ is a nonseparating set for $A'$. This contradicts Theorem \ref{thm:57}. 

This completes case 2 and therefore this completes the proof of the theorem.
\end{proof}

\begin{cor}\label{cor:60}
For any prime $p\geq7$ there does not exist a NET map with degree $p^3$ whose Teichm\"uller map is constant.
\end{cor}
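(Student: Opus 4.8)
The plan is to deduce Corollary~\ref{cor:60} directly from Theorem~\ref{thm:59}, using Theorem~\ref{thm:55} to convert the dynamical hypothesis into the purely group-theoretic one. Only one direction of Theorem~\ref{thm:55} is needed: that a constant Teichm\"uller map forces the appearance of a nonseparating subset.

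First I would argue by contradiction: suppose there is a NET map $f$ of degree $p^{3}$ whose Teichm\"uller map is constant. Associate to $f$, as in the construction recalled in Section~3, the lattices $\Lambda_{1}<\Lambda_{2}$ with $\Lambda_{2}\cong\Z^{2}$, the spheres $S^{2}_{j}$, the tori $T_{j}$, and the degree-$2$ branched cover $p_{1}:T_{1}\to S^{2}_{1}$; recall that $[\Lambda_{2}:\Lambda_{1}]=\deg(f)=p^{3}$. Put $A:=\Lambda_{2}/2\Lambda_{1}$. By Theorem~\ref{thm:55}, the assumption that the Teichm\"uller map of $f$ is constant yields that $H:=p_{1}^{-1}(P_{f})$ is a nonseparating subset of $A$.

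Next I would check that $A$ satisfies every hypothesis of Theorem~\ref{thm:59}. Since $A$ is a quotient of the rank-two free abelian group $\Lambda_{2}$, it is generated by two elements. Because $\Lambda_{1}\subseteq\Lambda_{2}$ we have $2\Lambda_{1}\subseteq 2\Lambda_{2}\subseteq\Lambda_{2}$, so $2A=2\Lambda_{2}/2\Lambda_{1}$ and hence $A/2A\cong\Lambda_{2}/2\Lambda_{2}\cong\Z/2\Z\oplus\Z/2\Z$. Finally $|A|=[\Lambda_{2}:2\Lambda_{1}]=[\Lambda_{2}:\Lambda_{1}]\,[\Lambda_{1}:2\Lambda_{1}]=p^{3}\cdot 4=4p^{3}$, as already observed in Step~6 of the construction in Section~3. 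Thus $A$ is a finite abelian group generated by two elements with $A/2A\cong\Z/2\Z\oplus\Z/2\Z$ and $|A|=4p^{3}$, $p\geq 7$ prime, so Theorem~\ref{thm:59} applies and asserts that $A$ has no nonseparating subset. This contradicts the existence of $H$, which proves the corollary.

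I do not expect any genuine obstacle here: all the substance resides in Theorem~\ref{thm:59}, and the corollary is essentially a restatement of it through the dictionary provided by Theorem~\ref{thm:55}. The only point deserving a line of care is the identification of the abelian group attached to $f$ — namely verifying that $\Lambda_{2}/2\Lambda_{1}$ is two-generated, has order $4\deg(f)$, and has $2$-torsion quotient $(\Z/2\Z)^{2}$ — all of which is immediate from $\Lambda_{2}\cong\Z^{2}$ and $\Lambda_{1}\subseteq\Lambda_{2}$.
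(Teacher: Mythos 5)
Your proposal is correct and is exactly the route the paper intends: the corollary is deduced from Theorem~\ref{thm:59} via the dictionary of Theorem~\ref{thm:55}, with the identifications $A=\Lambda_{2}/2\Lambda_{1}$, $|A|=4\deg(f)=4p^{3}$, $A/2A\cong\Z/2\Z\oplus\Z/2\Z$ all as in Step~6 of the construction in Section~3. The paper leaves this deduction implicit, and your verification of the hypotheses of Theorem~\ref{thm:59} fills it in correctly.
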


Our main theorem is a nonexistence result. We begin with the following lemma.

\begin{lem}\label{lem:60} Let $N=p_{1}^{k_{1}}\cdots p_{n}^{k_{n}}$ with $p_{i}\geq13$. Let $A=\Z/a\Z\oplus\Z/b\Z$ such that $a|b$, $N=ab$ and $a>1$. If $D\subseteq A$ so that $\#D\leq12$, then there exists a cyclic subgroup $G$ of $A$ such that $G\cap D\subseteq\{0\}$ and $A/G$ is cyclic.\end{lem}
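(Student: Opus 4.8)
The plan is to count the cyclic subgroups $G$ of $A=\Z/a\Z\oplus\Z/b\Z$ (with $a\mid b$) for which $A/G$ is cyclic, and show that there are so many of them that, by a pigeonhole argument, at least one is disjoint from the $12$-element set $D$ except at $0$. The key observation is that a cyclic subgroup $G$ with $A/G$ cyclic must itself be a ``full-rank'' complement-like subgroup: writing $A\cong\bigoplus_{i}(\Z/p_i^{a_i}\Z\oplus\Z/p_i^{b_i}\Z)$ via the primary decomposition, $A/G$ is cyclic iff for each prime $p_i$ the image of $G$ in the $p_i$-primary part has index dividing $p_i^{\min(a_i,b_i)}\cdot p_i^{|a_i-b_i|}=p_i^{\max}$ in a controlled way; concretely one reduces to the case $A=\Z/p\Z\oplus\Z/p\Z$ (or $\Z/p\Z\oplus\langle p\rangle$ inside $\Z/p\Z\oplus\Z/p^2\Z$), where the cyclic subgroups $G$ of order $p$ with cyclic quotient are exactly the $p+1$ lines, and these pairwise intersect only in $0$.

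First I would reduce to the case where $a=b$ is squarefree: by the argument already used in Theorems \ref{thm:57} and \ref{thm:59}, if some prime power $p_i^{k_i}$ enters asymmetrically we can embed $A$ into a larger group $\widetilde A$ of the form $\bigoplus(\Z/p_i\Z\oplus\Z/p_i\Z)$ (times possibly a harmless factor), inflating $D$ to $i_c(D)$ which still has at most $12$ elements, and produce $G$ there; pulling back under the monomorphism and intersecting with $A$ gives the desired subgroup of $A$. So it suffices to treat $A=\bigoplus_{i=1}^{n}(\Z/p_i\Z\oplus\Z/p_i\Z)$ with the $p_i\geq 13$ distinct. In this case, by the Chinese Remainder Theorem, cyclic subgroups $G$ with $A/G$ cyclic correspond to choices, for each $i$, of a line $G^{(i)}$ in $\Z/p_i\Z\oplus\Z/p_i\Z$, of which there are $p_i+1\geq 14$.

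Next, the counting step. For each prime $p_i$ and each of its $p_i+1$ lines $L$, the number of nonzero elements of $D$ that land in the line-subgroup $L\oplus\bigoplus_{j\ne i}(\Z/p_j\Z\oplus\Z/p_j\Z)$-type preimage is what we must avoid; more cleanly, projecting $D$ to the $p_i$-part, each nonzero element of this projection lies on exactly one line, so at most $\#D\le 12$ of the $p_i+1\ge 14$ lines are ``bad'' for the coordinate $i$. Hence for each $i$ there is a good line, and assembling a good line for every $i$ via CRT gives a cyclic $G$ with $A/G$ cyclic and $G\cap D\subseteq\{0\}$: indeed a nonzero $d\in G\cap D$ would have, in some primary part $p_i$, a nonzero component lying on the chosen line $G^{(i)}$, contradicting goodness. (One must check the edge case where $d$ has zero component in every $p_i$-part, i.e. $d=0$, which is excluded.)

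The step I expect to be the main obstacle is the reduction to the squarefree, equal-exponent case with precise bookkeeping of how $D$ transforms under the monomorphism $i_c$ and how ``$A/G$ cyclic'' is preserved and reflected: one needs the analogue of the fact, used implicitly above, that a cyclic subgroup of $\Z/p\Z\oplus\langle p\rangle\subseteq\Z/p\Z\oplus\Z/p^2\Z$ with cyclic quotient in the ambient group corresponds to a line, and that distinct lines meet only at $0$ so that the ``at most $\#D$ bad lines'' bound is genuine. Once the primary decomposition is set up and the per-prime line count $p_i+1\ge \#D+2$ is in hand, the pigeonhole and CRT assembly are routine, and the hypothesis $p_i\ge 13$ (giving $p_i+1\ge 14>12\ge\#D$) is exactly what makes the count close.
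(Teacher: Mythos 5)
Your core mechanism is the paper's: split $A$ into its primary components, observe that each non\-cyclic component contains at least $p_i\geq 13$ cyclic subgroups with cyclic quotient that pairwise meet only in $0$, conclude by pigeonhole that at most $12$ of them can meet the projection $\phi_i(D)$ outside $0$, and assemble one good choice per prime into $G=\oplus_i G_i$ (cyclic, with cyclic quotient, by coprimality of orders). The paper does exactly this, working directly with $A\cong C\oplus\bigoplus_{i\in I_2}\bigl(\Z/p_i^{s_i}\Z\oplus\Z/p_i^{k_i-s_i}\Z\bigr)$, where $C$ collects the primes dividing $b$ but not $a$ and is simply left outside $G$.

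The step you flag as the main obstacle is indeed where your write-up breaks down, and it is avoidable. There is no reduction to ``$a=b$ squarefree'': $\Z/a\Z\oplus\Z/b\Z$ does not embed in $\bigoplus_i(\Z/p_i\Z\oplus\Z/p_i\Z)$ once any $k_i-s_i\geq 2$, since the latter has exponent $p_1\cdots p_n$. (The pullback half of your reduction is actually harmless: for any overgroup $\widetilde A$ and any cyclic $\widetilde G\leq\widetilde A$ with $\widetilde A/\widetilde G$ cyclic, $A\cap\widetilde G$ is cyclic and $A/(A\cap\widetilde G)$ embeds in $\widetilde A/\widetilde G$, so it is cyclic --- this is the paper's Proposition \ref{prop:a01} argument. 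The problem is only the target of the embedding.) The clean fix, and what the paper's assertion ``since $\#\phi_i(D)\leq 12$ and $p_i\geq 13$, there exists $G_i$'' rests on, is to exhibit the required family inside each component $\Z/p^{s}\Z\oplus\Z/p^{t}\Z$ itself ($1\leq s\leq t$): for instance $\langle(1,0)\rangle$ together with $\langle(j,p^{t-s})\rangle$ for $j=1,\dots,p-1$ gives $p\geq 13$ cyclic subgroups, each with cyclic quotient, pairwise intersecting in $0$; each nonzero element of $\phi_i(D)$ lies in at most one of them, so one of the $p$ is good. With that substitution for your squarefree reduction, the rest of your CRT assembly and the edge-case check at $0$ go through as written.
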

\begin{proof}
Since $a|b$ and $N=ab$, then $a=p_{1}^{s_{1}}\cdots p_{n}^{s_{n}}$ and $b=p_{1}^{k_{1}-s_{1}}\cdots p_{n}^{k_{n}-s_{n}}$, where $0\leq 2s_{i}\leq k_{i}$. Let $I=\{1,\cdots,n\}$ and define $I_{1}:=\{i\in I: s_{i}=0\}$ and $I_{2}:=I\setminus I_{1}$. Since $a>1$, $I_{2}$ cannot be empty. Then $A\cong C\oplus P$, where $C=\ds\oplus_{i\in I_{1}}\Z/p_{i}^{k_{i}}\Z$ and $P=\oplus_{i\in I_{2}}(\Z/p_{i}^{s_{i}}\Z\oplus\Z/p_{i}^{k_{i}-s_{i}}\Z)$. Without loss of generality we may assume that $A=C\oplus P$. Now, for each $i\in I_{2}$ let $\phi_{i}:A\to \Z/p_{i}^{s_{i}}\Z\oplus\Z/p_{i}^{k_{i}-s_{i}}\Z$ be the canonical projection. Since $\#\phi_{i}(D)\leq12$ and $p_{i}\geq13$, there exists a cyclic subgroup $G_{i}$ of $\Z/p_{i}^{s_{i}}\Z\oplus\Z/p_{i}^{k_{i}-s_{i}}\Z$ such that $G_{i}\cap\phi_{i}(D)\subseteq\{0\}$ and $\big(\Z/p_{i}^{s_{i}}\Z\oplus\Z/p_{i}^{k_{i}-s_{i}}\Z\big) / G_{i}$ is cyclic. The subgroup $G:=\oplus_{i\in I_{2}}G_{i}$ satisfies the conclusion.
\end{proof}

\begin{thm}\label{thm:main} Let $A$ be a finite Abelian group generated by two elements such that $A/2A\cong\mathbb{Z}/2\mathbb{Z}\oplus\mathbb{Z}/2\mathbb{Z}$. If 
$|A|=4p_{1}^{k_{1}}p_{2}^{k_{2}}\cdots p_{n}^{k_{n}}$ with  $p_{i}$ prime, $p_{i}\geq 13$ and $|k|=k_{1}+k_{2}+\cdots +k_{n}\geq1$, then $A$ does not contain a nonseparating subset.
\end{thm}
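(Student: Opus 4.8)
I would follow the architecture of the proofs of Theorems \ref{thm:57} and \ref{thm:59}, now using Lemma \ref{lem:60} in place of the ad hoc counting done there, and running an induction on $|A|$ to absorb the ``defective'' case. First I normalize the group: since $A$ is generated by two elements and $A/2A\cong\Z/2\Z\oplus\Z/2\Z$, its invariant factor decomposition is $A\cong\Z/2a\Z\oplus\Z/2b\Z$ with $a\mid b$ and $ab=N:=p_{1}^{k_{1}}\cdots p_{n}^{k_{n}}$; since $N$ is odd this is $A\cong\Z/2\Z\oplus\Z/2\Z\oplus\Z/a\Z\oplus\Z/b\Z$. If $a=1$ then $2A\cong\Z/N\Z$ is cyclic of odd order and Theorem \ref{thm:911} gives the conclusion (this handles the base of the induction). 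So assume $a>1$, suppose toward a contradiction that $H=\coprod_{i=1}^{4}\{\pm h_{i}\}$ is nonseparating for $A$, and let $\phi\colon A\to\Z/a\Z\oplus\Z/b\Z$ be the canonical projection. The argument splits according to whether $\langle\phi(H)\rangle$ equals $\Z/a\Z\oplus\Z/b\Z$.

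Suppose first $\langle\phi(H)\rangle\subsetneq\Z/a\Z\oplus\Z/b\Z$. Then $A':=\bigl(\Z/2\Z\oplus\Z/2\Z\bigr)\oplus\langle\phi(H)\rangle$ is a proper subgroup of $A$ containing $H$; it has the form $\Z/2a'\Z\oplus\Z/2b'\Z$ with $a'\mid b'$ and $a'b'$ a proper divisor of $N$, hence again a product of primes $\geq 13$, and $A'/2A'\cong\Z/2\Z\oplus\Z/2\Z$. By Lemma \ref{lem:a05}, $H$ is nonseparating for $A'$. If $a'=1$ this contradicts Theorem \ref{thm:911}; if $a'>1$ it contradicts the inductive hypothesis applied to $A'$, since $|A'|<|A|$. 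So this case cannot occur.

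Now suppose $\langle\phi(H)\rangle=\Z/a\Z\oplus\Z/b\Z$. Put $D=\{\phi(h_{i})\pm\phi(h_{j}):1\leq i<j\leq 4\}$, so $\#D\leq 12$; by Lemma \ref{lem:60} (applied with $a\mid b$, $ab=N$, $a>1$) there is a cyclic subgroup $G\leq\Z/a\Z\oplus\Z/b\Z$ with $G\cap D\subseteq\{0\}$ and $(\Z/a\Z\oplus\Z/b\Z)/G$ cyclic. Lemma \ref{lem:a6} then lets us assume $\langle\phi(h_{1}),\phi(h_{2})\rangle=\Z/a\Z\oplus\Z/b\Z$ and that $h_{2},h_{3},h_{4}$ all differ by an element of order $2$. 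Next I imitate the endgame of Theorem \ref{thm:59}: set $\tilde{A}:=\Z/2\Z\oplus\Z/2\Z\oplus\Z/b\Z\oplus\Z/b\Z$ and let $i_{c}\colon A\to\tilde{A}$ be $i_{c}(x,y,z,t)=(x,y,(b/a)z,t)$, which is injective, so $i_{c}(H)$ is nonseparating for $\tilde{A}$ by Lemma \ref{lem:56}. Write $\mu=i_{c}(\phi(h_{1}))$ and $\nu=i_{c}(\phi(h_{2}))$, so that $\langle\mu,\nu\rangle=i_{c}(\Z/a\Z\oplus\Z/b\Z)$ is not cyclic (here $a>1$). Since $\tilde{A}\cong\Z/2b\Z\oplus\Z/2b\Z$, every element lies in a cyclic direct summand of order $2b$ (checked one prime at a time), so we may choose a cyclic subgroup $\tilde{B}\leq\tilde{A}$ of order $2b$ with $i_{c}(h_{2})\in\tilde{B}$ and $\tilde{A}/\tilde{B}$ cyclic. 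Because $i_{c}(h_{2}),i_{c}(h_{3}),i_{c}(h_{4})$ differ by $2$-torsion, their cosets modulo $\tilde{B}$ lie in the unique order-$2$ subgroup of $\tilde{A}/\tilde{B}\cong\Z/2b\Z$ and are not all trivial (otherwise two of $h_{2},h_{3},h_{4}$ would coincide), so the coset numbers of $h_{2},h_{3},h_{4}$ relative to $\tilde{B}$ lie in $\{0,b\}$, that of $h_{2}$ being $0$. Running through the few resulting configurations of sorted coset numbers, the requirement $c_{2}=c_{3}$ for this $\tilde{B}$ forces the coset number of $h_{1}$ to be $0$ or $b$; in either case $2\,i_{c}(h_{1})=2\mu\in\tilde{B}$, and likewise $2\nu\in\tilde{B}$ since $i_{c}(h_{2})\in\tilde{B}$. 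As $\mu$ and $\nu$ have odd order, $\langle\mu,\nu\rangle=\langle 2\mu,2\nu\rangle\subseteq\tilde{B}$, which is impossible because $\tilde{B}$ is cyclic while $\langle\mu,\nu\rangle$ is not. This contradiction finishes the main case, and the theorem.

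The genuinely new input is Lemma \ref{lem:60}, which packages in one stroke the counting that Theorems \ref{thm:57} and \ref{thm:59} carried out by hand for single prime powers; together with Lemma \ref{lem:a6} it reduces the whole argument to bookkeeping. I expect the step needing the most care to be the coset-number analysis in the main case: one must ensure that $\tilde{B}$ is simultaneously cyclic and has cyclic quotient (so that it is an admissible test subgroup in the definition of nonseparating), that $i_{c}(h_{2})$ genuinely lies in it, and that each of the few configurations of coset numbers of $h_{1},h_{2},h_{3},h_{4}$ is excluded.
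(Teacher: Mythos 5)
Your proposal is correct and follows essentially the same route as the paper: induction with Theorem \ref{thm:911} as the base, the dichotomy on whether $\langle\phi(H)\rangle$ is all of $\Z/a\Z\oplus\Z/b\Z$ (with Lemma \ref{lem:a05} plus the inductive hypothesis handling the degenerate branch), and Lemma \ref{lem:60} feeding Lemma \ref{lem:a6} followed by the cyclic-subgroup/coset-number contradiction in $\tilde{A}$. The only cosmetic difference is that you run the embedding $i_{c}$ uniformly, whereas the paper treats $a=b$ and $a<b$ as separate subcases; your coset-number analysis (coset number of $h_{1}$ equal to $0$ or $b$, hence $2\mu\in\tilde{B}$) is, if anything, stated slightly more carefully than the paper's.
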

\begin{proof} We proceed by induction on $|k|$. If $|k|=1$ then $A\cong\mathbb{Z}/2\mathbb{Z}\oplus\Z/2p\Z$. Since $2A$ is cyclic with odd order, by Theorem \ref{thm:911}, the conclusion follows. 
Now, suppose the conclusion holds for any $|k|\in\{1,\cdots,m-1\}$ and assume that $|A|=4p_{1}^{k_{1}}p_{2}^{k_{2}}\cdots p_{n}^{k_{n}},$ where $k_{1}+\cdots +k_{n}=m$. Then there are two positive odd integers $a$ and $b$ such that $A\cong\mathbb{Z}/2a\mathbb{Z}\oplus\mathbb{Z}/2b\mathbb{Z}$, $|A|=4ab$, and $a$ divides $b$. If $a=1$, then $2A$ is a cyclic group with odd order. By Theorem \ref{thm:911}, the conclusion follows. Now, assume that $A=\mathbb{Z}/2\mathbb{Z}\oplus\mathbb{Z}/2\mathbb{Z}\oplus\Z/a\Z\oplus\Z/b\Z$ with $a>1$ and proceed by contradiction. Suppose that $A$ contains a nonseparating subset $H=H_{1}\coprod H_{2}\coprod H_{3}\coprod H_{4}$, where each $H_{i}=\{\pm h_{i}\}$. Then, either $\Z/a\Z\oplus\Z/b\Z\subseteq\langle H\rangle$ or $(\Z/a\Z\oplus\Z/b\Z)\cap(A\setminus \langle H\rangle)\neq\emptyset$.

\textbf{CASE 1.}  $\Z/a\Z\oplus\Z/b\Z\subseteq\langle H\rangle$. Let $\phi:A\to\Z/a\Z\oplus\Z/b\Z$ be the canonical projection and let $D:=\{\phi(h_{i})\pm\phi(h_{j}): i,j\in\{1,2,3,4\}\ \text{and}\ i<j\}$. The cardinality of $D$ is at most $12$. By Lemma \ref{lem:60}, there exists a cyclic subgroup $G$ of $\Z/a\Z\oplus\Z/b\Z$ such that $G\cap D\subseteq\{0\}$ and $(\Z/a\Z\oplus\Z/b\Z)/G$ is cyclic. Then, by Lemma \ref{lem:a6}, we may, and do, assume that $\langle\phi(h_{1}),\phi(h_{2})\rangle=\Z/a\Z\oplus\Z/b\Z$ and  $h_{2}$, $h_{3}$ and $h_{4}$ all differ by an element of order 2. 
 
If $a=b$, there exists a cyclic subgroup $B$ of $A$ of order $2b$ such that $h_{2}\in B$ and $A/B$ is cyclic. Since $h_{2}$, $h_{3}$ and $h_{4}$ all differ by an element of order 2, then $h_{2}+B$, $h_{3}+B$, $h_{4}+B$ lie in the subgroup of order 2 in $A/B$. They are not all equal because $B$ does not contain three elements of order 2.  So either $c_{1}=c_{2}=0$ and $c_{4}=b$ or $c_{1}=0$ and $c_{3}=c_{4}=b$. Thus, $h_{1}+B$ must have order 2 in $A/B$. Hence $2\phi(h_{1})\in B$ and so $\phi(h_{1})=((b+1)/2)(2\phi(h_{1}))\in B$. Therefore $\langle\phi(h_{1}),\phi(h_{2})\rangle\subseteq B$. This is a contradiction because $B$ is cyclic and $\langle\phi(h_{1}),\phi(h_{2})\rangle=\Z/a\Z\oplus\Z/b\Z$.

If $a<b$, set $\tilde{A}:=\mathbb{Z}/2\mathbb{Z}\oplus\mathbb{Z}/2\mathbb{Z}\oplus\Z/b\Z\oplus\Z/b\Z$ and let $i_{c}$  be the canonical monomorphism $i_{c}:A\to\tilde{A}$ defined by $i_{c}(x,y,z,t)=(x,y,(b/a)z,t)$. By Lemma \ref{lem:56} $i_{c}(H)$ is a nonseparating subset of $\tilde{A}$. Now, set $\mu:=i_{c}(\phi(h_{1}))$ and $\nu:=i_{c}(\phi(h_{2}))$. Since $\langle\phi(h_{1}),\phi(h_{2})\rangle=\Z/a\Z\oplus\Z/b\Z$, then $\langle \mu,\nu\rangle=i_{c}(\Z/a\Z\oplus\Z/b\Z)=\langle b/a\rangle\oplus\Z/b\Z$. Furthermore, $i_{c}(h_{2})$, $i_{c}(h_{3})$ and $i_{c}(h_{4})$ all differ by an element of order 2. Since $\nu\in\Z/b\Z\oplus\Z/b\Z$ there exists a cyclic subgroup $\tilde{B}$ of $\tilde{A}$ of order $2b$ such that $i_{c}(h_{2})\in\tilde{B}$ and $\tilde{A}/\tilde{B}$ is cyclic. Then $i_{c}(h_{2})+\tilde{B}$, $i_{c}(h_{3})+\tilde{B}$, $i_{c}(h_{4})+\tilde{B}$ lie in the subgroup of order 2 in $\tilde{A}/\tilde{B}$. They are not all equal because $\tilde{B}$ does not contain three elements of order 2.  So either $c_{1}=c_{2}=0$ and $c_{4}=b$ or $c_{1}=0$ and $c_{3}=c_{4}=b$. Thus,  $i_{c}(h_{1})+\tilde{B}$ must have order 2 in $\tilde{A}/\tilde{B}$. Hence $2\mu\in\tilde{B}$ and so $\mu=((b+1)/2)(2\mu)\in\tilde{B}$. Therefore $\langle\mu,\nu\rangle\subseteq\tilde{B}$. This is a contradiction because $\tilde{B}$ is cyclic and $\langle\mu,\nu\rangle=\langle b/a\rangle\oplus\Z/b\Z$.
 
\textbf{CASE 2.}   $(\Z/a\Z\oplus\Z/b\Z)\cap(A\setminus \langle H\rangle)\neq\emptyset$.
This means that $\langle H\rangle$ does not contain a copy of $\Z/a\Z\oplus\Z/b\Z$. Then, $H\subset\langle H\rangle\subseteq A'$ where $A'$ is a proper subgroup of $A$ whose order has the form $4r$. Obviously, $r$ divides $|A|/4$ and $r<|A|/4$. Since $A'$ is a finite Abelian group generated by 2 elements, we can apply the inductive hypothesis to $A'$ and conclude that $A'$ does not contain a nonseparating subset. However, this contradicts Lemma \ref{lem:a05}.

This proves Theorem \ref{thm:main}.
\end{proof}\

\begin{cor} Let $n=p_{1}^{k_{1}}p_{2}^{k_{2}}\cdots p_{n}^{k_{n}}$ with  $p_{i}$ prime, $p_{i}\geq 13$. There does not exist a NET map with degree $n$ whose Teichm\"uller map is constant.
\end{cor}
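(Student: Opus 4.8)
The plan is to read this corollary off the Main Theorem (Theorem~\ref{thm:main}) via the characterization in Theorem~\ref{thm:55}. Suppose for contradiction that $f$ is a NET map of degree $n = p_{1}^{k_{1}}p_{2}^{k_{2}}\cdots p_{n}^{k_{n}}$, with every $p_{i}\geq 13$, whose Teichm\"uller map is constant. A NET map has critical points (indeed local degree $2$ at each of them), so $\deg(f)=n\geq 2$ and hence $|k|=k_{1}+\cdots+k_{n}\geq 1$. As in Section~3, attach to $f$ sublattices $\Lambda_{1}<\Lambda_{2}$ of $\R^{2}$ and the degree-$2$ branched cover $p_{1}\colon T_{1}\to S^{2}_{1}$ with $T_{1}=\R^{2}/2\Lambda_{1}$, and put $A:=\Lambda_{2}/2\Lambda_{1}$.

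Next I would verify that $A$ meets the three hypotheses of Theorem~\ref{thm:main}. First, $A$ is generated by two elements, being a quotient of $\Lambda_{2}\cong\Z^{2}$. Second, since $\Lambda_{1}$ is a rank-$2$ lattice, $[\Lambda_{1}:2\Lambda_{1}]=4$, so
\[
|A| \;=\; [\Lambda_{2}:2\Lambda_{1}] \;=\; [\Lambda_{2}:\Lambda_{1}]\,[\Lambda_{1}:2\Lambda_{1}] \;=\; 4\deg(f) \;=\; 4\,p_{1}^{k_{1}}p_{2}^{k_{2}}\cdots p_{n}^{k_{n}},
\]
with each $p_{i}\geq 13$ and $|k|\geq 1$. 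Third, from the inclusions $2\Lambda_{1}\subseteq 2\Lambda_{2}\subseteq\Lambda_{2}$ we get $A/2A\cong\Lambda_{2}/2\Lambda_{2}\cong\Z/2\Z\oplus\Z/2\Z$. Hence Theorem~\ref{thm:main} applies, and $A$ contains no nonseparating subset.

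On the other hand, Theorem~\ref{thm:55} says that the Teichm\"uller map of $f$ is constant if and only if $p_{1}^{-1}(P_{f})$ is a nonseparating subset of $\Lambda_{2}/2\Lambda_{1}=A$. Since the Teichm\"uller map of $f$ was assumed constant, $A$ \emph{does} contain a nonseparating subset, contradicting the previous paragraph. This proves the corollary. I do not expect a genuine obstacle here: all the difficulty is already packaged in Theorem~\ref{thm:main}, and the corollary is just a translation through Theorem~\ref{thm:55}. The only steps that deserve any care are the elementary index identity $|A|=4\deg(f)$, the computation $A/2A\cong\Z/2\Z\oplus\Z/2\Z$, and the observation that $n\geq 2$ so that the hypothesis $|k|\geq 1$ of the Main Theorem is actually in force (for $n=1$ the statement is vacuous, as no NET map has degree $1$).
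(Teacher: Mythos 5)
Your proposal is correct and follows exactly the route the paper intends: combine Theorem~\ref{thm:55} (constant pullback for a NET map $f$ is equivalent to $p_{1}^{-1}(P_{f})$ being nonseparating in $A=\Lambda_{2}/2\Lambda_{1}$) with Theorem~\ref{thm:main}, after checking $|A|=4\deg(f)$ and $A/2A\cong\Z/2\Z\oplus\Z/2\Z$ as in the construction of Section~3. No gaps; this is essentially the same argument.
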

\newpage
\section{On Thurston maps of degree 2}
Let $f:S^2\to S^{2}$ be a Thurston map and $P_{f}$ its postcritical set. Combining statements 1 and 4 of Theorem 5.1 of \cite{BEKP} implies that the pullback map $\Sigma_{f}$ is constant if and only if for every essential simple closed curve $\alpha$ in $S^{2}\setminus P_{f}$, every connected component of $f^{-1}(\alpha)$ is either trivial or peripheral in $S^{2}\setminus P_{f}$. We use this result to conclude that there does not exist a Thurston map of degree 2 with at least four postcritical points whose Teichm\"uller map is constant.

A Thurston map $f$ is a $topological\ polynomial$ if there exists a critical point $w$, such that $f^{-1}(w)=\{w\}$. If $f$ is a Thurston polynomial and $|P_{f}|>2$, there is a unique point $w$ such that $f^{-1}(w)=\{w\}$; we call this point $\infty$.  

 \begin{pro}\label{pro:B1}
 Let $f$ be a quadratic topological polynomial. If $|P_{f}|\geq4$, then the Teichm\"uller map $\Sigma_{f}$ cannot be constant.
 \end{pro}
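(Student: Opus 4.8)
The plan is to use the curve criterion from \cite{BEKP} just quoted: $\Sigma_f$ is constant if and only if every connected component of $f^{-1}(\alpha)$ is trivial or peripheral in $S^2\setminus P_f$, for every essential simple closed curve $\alpha$ in $S^2\setminus P_f$. So I would argue by contradiction: assuming $f$ is a quadratic topological polynomial with $|P_f|\geq 4$ and $\Sigma_f$ constant, I will produce an essential simple closed curve $\alpha$ whose preimage has a component that is both essential and non-peripheral in $S^2\setminus P_f$.

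First I would set up the structure of a quadratic topological polynomial. Since $\deg f=2$, there is exactly one critical point besides $\infty$, call it $c$, with critical value $v=f(c)$, and $f$ restricted to $S^2\setminus\{c,\infty\}$ is a double cover of $S^2\setminus\{v,\infty\}$. The postcritical set $P_f$ contains $\infty$ and $v$ and is forward invariant; write $P_f=\{\infty,v,p_3,\ldots\}$ with $|P_f|=N\geq 4$. The key combinatorial fact is that $f^{-1}(P_f\setminus\{\infty\})$ consists of $2(N-1)$ points if none of them is $c$, or $2(N-1)-1$ points if $c\in f^{-1}(P_f)$ (which happens exactly when $v\in P_f$, i.e.\ always, so one of the preimages of $v$ is $c$ counted with multiplicity $2$); in any case $f^{-1}(P_f)\supsetneq P_f$ since $N\geq 4$ forces strictly more than $N$ preimage points. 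Because $f$ is a polynomial, $\infty$ is totally invariant, so all the "extra" preimage points lie in the finite plane.

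Next I would exhibit the curve. Choose a point $q\in f^{-1}(P_f)\setminus P_f$; since $q\neq\infty$ and $f(q)\in P_f\setminus\{\infty\}$, and $q\neq c$ generically (and if every such $q$ equals $c$ one treats that degenerate case separately, noting $c$ still has a sibling preimage of $v$ under the double cover structure unless $v=c$, which is impossible as $f^{-1}(c)\ni c$ only if $c=\infty$). Now take a small simple closed curve $\alpha_0$ in $S^2\setminus P_f$ that encircles exactly the two points $\{f(q),\,p\}$ for a suitably chosen second postcritical point $p$, chosen so that $\alpha_0$ is essential (this needs $N\geq 4$ so that both the inside and the outside contain at least two points of $P_f$). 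The preimage $f^{-1}(\alpha_0)$: over the disk bounded by $\alpha_0$ lie points of $f^{-1}(P_f)$; by choosing the two encircled points and their preimages carefully I arrange that one component of $f^{-1}(\alpha_0)$ bounds a disk containing exactly two points of $P_f$ — for instance one preimage of $f(q)$ paired with one preimage of $p$ — making that component essential and non-peripheral. The degree-$2$ covering combinatorics (the Riemann–Hurwitz count giving $2$ preimages of each point not equal to $v$, and the fact that $f^{-1}$ of a disk avoiding the critical value $v$ is two disjoint disks, while $f^{-1}$ of a disk containing $v$ is one disk) is exactly what makes this bookkeeping work.

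I expect the main obstacle to be the casework in choosing $\alpha_0$ so that a component of its preimage is simultaneously essential and non-peripheral: one must handle separately whether the critical value $v$ lies inside or outside $\alpha_0$, and whether $\infty$ is inside or outside, since these change whether $f^{-1}(\alpha_0)$ is connected (one curve) or splits into two, and they change how $P_f\cap f^{-1}(P_f)$ distributes among the complementary disks. The cleanest route is probably to pick $\alpha_0$ to bound a disk $\Delta$ with $v\notin\Delta$, $\infty\notin\Delta$, and $\Delta\cap P_f=\{f(q),p\}$ with $p$ a postcritical point whose two preimages are both distinct from $c$; then $f^{-1}(\Delta)$ is two disjoint disks $\Delta',\Delta''$, and since $q\in f^{-1}(P_f)\setminus P_f$ sits in one of them, say $\Delta'$, while $\Delta'$ also contains one preimage of $p$ which is in $P_f$, the boundary $\partial\Delta'$ encloses exactly one point of $P_f$ among $\{$preimages of $f(q),p\}$ — and here the count must be pushed to guarantee $\partial\Delta'$ is essential, i.e.\ that $\Delta'$ contains at least two points of $P_f$ and its complement contains at least two as well, which is where $|P_f|\geq 4$ and a careful choice of $p$ (and possibly a preliminary isotopy of $\alpha_0$ to absorb or expel further postcritical points) is essential. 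Once such $\partial\Delta'$ is produced, it contradicts the \cite{BEKP} criterion, completing the proof.
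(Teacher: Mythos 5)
Your high-level strategy (the BEKP curve criterion plus an explicit curve whose preimage is essential and non-peripheral) is the same as the paper's, but the curve you construct cannot do the job. The fatal step is your ``cleanest route'': you take a disk $\Delta$ with $v\notin\Delta$, $\infty\notin\Delta$ and $\Delta\cap P_f=\{f(q),p\}$, so that $f^{-1}(\Delta)=\Delta'\sqcup\Delta''$ with each component mapping homeomorphically onto $\Delta$. Then any postcritical point $x\in\Delta'$ satisfies $f(x)\in f(P_f)\cap\Delta\subseteq P_f\cap\Delta=\{f(q),p\}$, so $x$ must be one of the two points $q,p_1$ of $\Delta'$ lying over $f(q)$ and $p$. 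Since you deliberately chose $q\notin P_f$, this forces $\Delta'\cap P_f\subseteq\{p_1\}$: the component you analyze contains at most \emph{one} postcritical point, so $\partial\Delta'$ is always trivial or peripheral, no matter how $p$ is chosen or how $\alpha_0$ is isotoped. There is no count to ``push.'' (The other component $\Delta''$ could in principle contain two postcritical points, but only if the second preimage of $f(q)$ and a postcritical preimage of $p$ both exist and land on the same sheet --- a pairing you neither control nor verify.)

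The paper's construction avoids this by putting the branch point $\infty$ \emph{inside} the disk. It takes an arc $\gamma$ from $\infty$ to a suitable postcritical point $w$ with $\gamma\cap P_f=\{w,\infty\}$ and sets $\alpha=\partial N(\gamma)$. Because $f^{-1}(\infty)=\{\infty\}$ with local degree $2$, the preimage $f^{-1}(\gamma)$ is a connected tree (two arcs meeting at $\infty$), so $f^{-1}(\alpha)$ is a single curve bounding a disk containing $\infty$ together with \emph{both} preimages of $w$: the two sheets are merged rather than separated. In the preperiodic case one takes $w=c_i$, the point where the tail of the critical orbit meets the cycle, so both preimages $c_{i-1}$ and $c_k$ of $w$ are postcritical and the preimage disk contains three points of $P_f$; in the periodic case one takes $w=a$ and gets $\{\infty,c\}$ inside. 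Either way both complementary components of the preimage curve contain at least two postcritical points once $|P_f|\geq5$ (the case $|P_f|=4$ is the NET-map case, disposed of by Theorem \ref{thm:910}, which you do not invoke). If you insist on a disjoint-disks construction you would additionally have to prove that some disk through two postcritical points lifts so that two postcritical preimages land on the same sheet; routing the curve through the totally invariant point $\infty$ sidesteps that entirely.
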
\begin{proof}
 If $|P_{f}|=4$ then $f$ is a NET map. By Theorem \ref{thm:910}, $\Sigma_{f}$ cannot be constant. From now on, assume that $|P_{f}|\geq5$. Since $\deg(f)=2,$ $f$ has two critical points; i.e. $\Omega_{f}=\{a,\infty\}$ where $a$ is some point in $S^2\setminus\{\infty\}$. We first assume that $z=a$ is a preperiodic critical point. Enumerate the finite postcritical points as $c_{1},c_{2},\cdots,c_{k}$ where $c_{j}=f^{\circ j}(a)$. The ramification portrait of $f$ is given by: 
 \begin{displaymath}
    \xymatrix{
    c_{0}=a \ar[r]^-{2} & c_{1}\ar[r]&\cdots \ar[r]& c_{i}\ar[r]&\cdots\ar[r]& c_{k}\ar@/^1pc/[ll]&  &\infty\ar@(dr,ur)[]_{2} }
\end{displaymath}\\
where $k=|P_{f}|-1$ and $i$ is some integer in $\{2,\cdots,k\}$. Note that $k\geq4$. Let $\gamma$ be an arc joining the points $\infty$ and $c_{i}$  such that $\gamma\cap P_{f}=\{c_{i},\infty\}$. Then $f^{-1}(\gamma)$ is the union of two arcs $\gamma_{1}$ and $\gamma_{2}$ so that $\gamma_{1}\cap\gamma_{2}=\{\infty\}$, $\gamma_{1}$ joins $\infty$ and $c_{k}$, and $\gamma_{2}$ joins $\infty$ and $c_{i-1}$. Now let $\alpha$ be the boundary of a small regular neighborhood of the arc $\gamma$. By continuity, we may take $\alpha$ to be a simple closed curve so that $\gamma$ is a core arc for $\alpha$ and $\alpha\cap P_{f}=\emptyset$. Then  $\gamma_{1}\cup\gamma_{2}$ is a \lq\lq core arc\rq\rq \ for $f^{-1}(\alpha)$. Figure \ref{keywordforfigure} illustrates $\alpha$ near $\gamma$ and their respective pre images. Thus, each connected  component of $S^{2}\setminus f^{-1}(\alpha)$ contains at least two postcritical points. Therefore, $\Sigma_{f}$ cannot be constant.

\begin{figure}[htb]
\center{\includegraphics[height=1.6in]{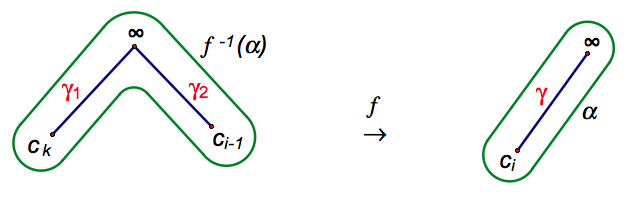}}
\caption[Preimage of a core arc under the topological polynomial $f$.]{\label{keywordforfigure}
{Small regular neighborhoods of the arc $\gamma$ and of its preimage}}
\end{figure}
 Now assume that $z=a$ is a periodic critical point. There is a unique point $c\in P_{f}$ so that $f(c)=a$. Note that $f$ maps $c$ to $a$ with degree $1$. Let $\gamma$ be an arc joining the points $\infty$ and $a$. Now let $\alpha$ be the boundary of a small regular neighborhood of the arc $\gamma$. Proceeding as in the preperiodic case, each connected  component of $S^{2}\setminus f^{-1}(\alpha)$ contains at least two postcritical points. Therefore, $\Sigma_{f}$ cannot be constant.
\end{proof}

S. Koch have provided an analytic proof of the preceding proposition. In Proposition 5.3 of \cite{K}, Koch shows that if $f$ is a bicritical topological polynomial, then $X:\mathcal{W}_{f}\to\mathcal{M}_{P_{f}}$ is inyective. So a moduli space map $g_{f}$ exists. This implies that $\Sigma_{f}$ cannot be constant. For further details, see Proposition 5.3  and Corollary 5.4 of \cite{K}.

\begin{teo}
Let $f$ be a Thurston map of degree 2.  If $|P_{f}|\geq4$, then $\Sigma_{f}$ cannot be constant.
\end{teo}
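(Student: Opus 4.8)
The plan is to reduce the general degree-2 Thurston map to the case of a quadratic topological polynomial, which is already handled by Proposition \ref{pro:B1}. A degree-2 map $f$ has exactly two critical points, call them $w_1$ and $w_2$, each of local degree $2$. The key observation is that $f^{-1}(\Omega_f)$ consists of at most two points (since $\deg f = 2$), and one checks that if $f$ is \emph{not} a topological polynomial, then the two critical points are ``linked'' by $f$ in the sense that $f^{-1}(w_i) = \{w_1, w_2\}$ is impossible for both $i$ simultaneously; rather the situation is that the grand orbits of the two critical points are arranged so that $f$ behaves, up to Thurston equivalence, like a polynomial. More precisely, I would argue: for a degree-2 branched cover, each point has either one preimage (a critical point) or two preimages; since there are exactly two critical points, either some critical point has itself as its unique preimage (so $f$ is a topological polynomial and we are done by Proposition \ref{pro:B1}), or each critical point $w_i$ has a single preimage $v_i$ with $v_i \neq w_i$, and moreover $f(w_1), f(w_2)$ are the two critical values.

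Next I would handle the remaining case, where $f$ is not a topological polynomial, by the same core-arc technique used in the proof of Proposition \ref{pro:B1}. Let $v_1, v_2$ be the (non-critical) preimages of the critical points $w_1, w_2$; note $f(v_i) = w_i$ with local degree $1$. Since $|P_f| \geq 4$, at least one of the two critical values $f(w_1), f(w_2)$ lies in $P_f$ and there is at least one more postcritical point ``far'' from the critical values. I would choose an arc $\gamma$ in $S^2$ joining a critical value $f(w_i)$ to another suitably chosen postcritical point, with $\gamma \cap P_f$ equal to its two endpoints, so that $f^{-1}(\gamma)$ is a connected tree (two arcs meeting at the critical point $w_i$, together with whatever preimage arc passes through $v_j$) whose complement has each component containing at least two postcritical points. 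Taking $\alpha$ to be the boundary of a thin regular neighborhood of $\gamma$, the preimage $f^{-1}(\alpha)$ then has each complementary component containing $\geq 2$ points of $P_f$, so no component of $f^{-1}(\alpha)$ is trivial or peripheral. By the characterization of constant pullback maps recalled at the start of this section (Theorem 5.1 of \cite{BEKP}), $\Sigma_f$ is not constant.

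The main obstacle I anticipate is the combinatorial bookkeeping of the non-polynomial case: one must verify that the two critical points cannot both ``swallow'' their own preimage and, having ruled that out, one must choose the arc $\gamma$ carefully enough that $f^{-1}(\gamma)$ genuinely separates $P_f$ into pieces each of size $\geq 2$ regardless of how the (at most) two critical values and the remaining postcritical points are distributed along the grand orbit. A clean way to organize this is to observe that, up to pre- and post-composition by homeomorphisms (which does not change whether $\Sigma_f$ is constant), a degree-2 Thurston map that is not a topological polynomial can be taken to be a rational-like model $z \mapsto$ (a degree-2 rational map) and then the critical values are two distinct points; joining one of them to a third postcritical point by $\gamma$ always works because the full preimage of that third point under $f$ is a pair of distinct non-critical points, so $f^{-1}(\gamma)$ is a finite tree with exactly the right separating behavior. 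I would also note the alternative, quicker route: if $f$ is not a topological polynomial then the postcritical set, being forward-invariant and of size $\geq 4$, forces a cycle or a long tail in the ramification portrait analogous to the preperiodic case of Proposition \ref{pro:B1}, and one simply reruns that argument verbatim with ``$\infty$'' replaced by the appropriate critical point. Either way, Proposition \ref{pro:B1} together with this case analysis finishes the proof.
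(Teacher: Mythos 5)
Your high-level strategy coincides with the paper's: dispose of the topological-polynomial case via Proposition \ref{pro:B1}, handle $|P_f|=4$ separately (via Theorem \ref{thm:910} --- you omit this, and your core-arc count does not obviously work when $|P_f|=4$), and for the remaining cases run the core-arc argument and invoke the criterion from \cite{BEKP}. However, there is a genuine gap in your treatment of the non-polynomial case, precisely at the point you yourself flag as ``the main obstacle.'' Two specific problems. First, your structural dichotomy is false: a critical point $w_i$ has a single preimage if and only if $w_i$ is itself a critical value, which need not happen (for $z^2-1$ the critical point $0$ has the two preimages $\pm1$); so the picture of ``two arcs meeting at the critical point $w_i$, together with whatever preimage arc passes through $v_j$'' is not the general one. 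Second, and more seriously, the claim that joining a critical value $f(w_i)$ to a third postcritical point ``always works because the full preimage of that third point is a pair of distinct non-critical points'' is a non sequitur: distinctness and non-criticality of the preimages do not put them in $P_f$. If $w_i\notin P_f$ and the chosen endpoint $z$ has only one preimage in $P_f$, the disk bounded by the component of $f^{-1}(\alpha)$ containing the preimage tree meets $P_f$ in a single point, so that component is peripheral and the criterion gives nothing. Concretely, for a portrait $a\to a_1\to a_2\to a_3\to a_4\to a_2$, $b\to b_1\to b_1$ with $a,b\notin P_f$, joining $a_1$ to $a_3$ or to $b_1$ fails; only the choice $z=a_2$ (whose two preimages $a_1,a_4$ both lie in $P_f$) works.

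What is missing is exactly the content of the paper's case analysis: when neither critical point is postcritical, one must locate either a postcritical point whose \emph{full} preimage contains two points of $P_f$ (the paper finds this where a preperiodic tail of length $\geq 2$ enters its cycle, in both the disjoint-orbit and intersecting-orbit cases), or, when this fails because each cycle is a fixed point absorbing its own tail, one must instead join the two fixed points of the two critical orbits and use that the preimage of that arc is two disjoint arcs, each carrying two postcritical points. Your ``alternative, quicker route'' of rerunning the polynomial argument with $\infty$ replaced by a critical point does not go through either, since that argument uses that $\infty$ is a fixed critical point equal to its own full preimage --- the very property that fails when $f$ is not a topological polynomial.
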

\begin{proof} We may assume that $f$ is not a quadratic topological polynomial. So $f$ has two critical points and neither of them is a fixed point. If $|P_{f}|=4$, then $f$ is a NET map  and by Theorem \ref{thm:910} conclusion follows. From now on assume that $|P_{f}|\geq5$. Let $\Omega_{f}=\{a,b\}$ be the set of critical points of $f$. Set $\mathcal{O}(a)=\{f^{\circ i}(a):i\in\mathbb{N}\}$  and $\mathcal{O}(b)=\{f^{\circ i}(b):i\in\mathbb{N}\}$ and analyze two cases.

Case I. $\mathcal{O}(a)\cap\mathcal{O}(b)=\emptyset$.\\
\textbf{1.} If $a\in P_{f}$, let $\gamma$ be an arc joining $a$ and $f(a)$ so that $\gamma\cap( P_{f}\setminus\{a,f(a)\})=\emptyset$. If $b\in P_{f}$, let $\gamma$ be an arc joining $b$ and $f(b)$ so that $\gamma\cap( P_{f}\setminus\{b,f(b)\})=\emptyset$.  Then, in any case, proceeding as in the quadratic topological polynomial case one sees that $\Sigma_{f}$ cannot be constant.\\
\textbf{2.} If $a\notin P_{f}$ and $b\notin P_{f}$,  then $a$ and $b$ are both preperiodic critical points. Then, either $\mathcal{O}(a)$ and $\mathcal{O}(b)$ contain fixed points or one of them, say $\mathcal{O}(a)$, contains no fixed point. If $\mathcal{O}(a)$ and $\mathcal{O}(b)$ contain fixed points, namely $a_{k}$ and $b_{r}$, take the preimage of a suitable core arc joining  $a_{k}$ and $b_{r}$ and proceed as in the quadratic topological polynomial case. Thus $\Sigma_{f}$ cannot be constant. If $\mathcal{O}(a)$ has no fixed point, there are two distinct points $p,q\in\mathcal{O}(a)$ so that $f(p)=f(q)$. Set $x:=f(p)$. Now, let $\gamma$ be an arc joining the points $x$ and $f(x)$ so that $\gamma\cap( P_{f}\setminus\{x,f(x)\})=\emptyset$. Then, proceeding as in the quadratic topological polynomial case, $\Sigma_{f}$ cannot be constant.

Case II. $\mathcal{O}(a)\cap\mathcal{O}(b)\neq\emptyset$.\\  
\textbf{1.} If $a\in P_{f}$ or  $b\in P_{f}$ proceed as in Case I to conclude that $\Sigma_{f}$ cannot be constant.\\
\textbf{2.} If $a\notin P_{f}$ and $b\notin P_{f}$. Set $k:=\min\{i\in\mathbb{N}: f^{\circ i}(b)\in\mathcal{O}(a)\}$. Since $b\notin P_{f}$ and $f$ maps $b$ to $f(b)$ with degree $2$, we have $k\geq2$. Set $x:=f^{\circ k}(b)$. Then $x=f(q)$ where $q=f^{\circ k-1}(b)$. By definition of $k$, $x\in\mathcal{O}(a)\setminus\{a,f(a)\}$. Thus, $x=f(p)$ for some $p\in\mathcal{O}(a)\cap P_{f}$. Due to the minimality of $k$, $p\neq q$. Then the ramification portrait of $f$ contains the directed subgraph  
\[\xymatrix{
&p \ar[rd]\\
& & x \ar[r]&f(x)\\
&q \ar[ru]\\ 
}\]
Let $\gamma$ be an arc joining the points $x$ and $f(x)$ so that $\gamma\cap( P_{f}\setminus\{x,f(x)\})=\emptyset$. Then, proceeding as in the quadratic topological polynomial case, $\Sigma_{f}$ cannot be constant. 
\end{proof}

The following is a slight generalization of Proposition \ref{pro:B1}.
 \begin{pro}
 Let $f$ be a topological polynomial of degree $n$ so that $|P_{f}|=m\geq4$. Suppose there exists $c\in P_{f}\setminus\{\infty\}$ such that $f^{-1}({c})$ contains no critical points. Let $k=|f^{-1}({c})\cap P_{f}|$. If $m-k\geq3$ then $\Sigma_{f}$ cannot be constant. 
 \end{pro}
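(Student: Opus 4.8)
The plan is to imitate the argument used for quadratic topological polynomials in Proposition \ref{pro:B1} and in the degree-2 theorem, now in the polynomial setting of arbitrary degree $n$. The key input is the criterion recalled at the start of Section 5: $\Sigma_f$ is constant if and only if every connected component of $f^{-1}(\alpha)$ is trivial or peripheral in $S^2\setminus P_f$, for every essential simple closed curve $\alpha$. So it suffices to exhibit one essential $\alpha$ with $\alpha\cap P_f=\emptyset$ whose preimage $f^{-1}(\alpha)$ has a component $\alpha'$ which is essential and non-peripheral, i.e.\ such that both components of $S^2\setminus\alpha'$ contain at least two points of $P_f$.

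First I would choose a core arc. Let $c\in P_f\setminus\{\infty\}$ be the given point whose fiber $f^{-1}(c)$ contains no critical point, and set $k=|f^{-1}(c)\cap P_f|$. Pick an arc $\gamma$ joining $c$ to $f(c)$ with $\gamma\cap(P_f\setminus\{c,f(c)\})=\emptyset$ (possible since $P_f$ is finite and $c\neq f(c)$ — one should note $c$ is not a fixed point, which holds because $f$ is a polynomial and the only fixed critical value situation is excluded; in the polynomial case the fixed point is $\infty\neq c$). Because $f^{-1}(c)$ contains no critical point, $f^{-1}(\gamma)$ is a disjoint union of $n$ arcs, each mapped homeomorphically onto $\gamma$, with one endpoint at a preimage of $c$ and the other at a preimage of $f(c)$; exactly one of these preimages-of-$f(c)$ endpoints is $c$ itself. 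Then I would take $\alpha$ to be the boundary of a thin regular neighborhood of $\gamma$, so $\alpha$ is a simple closed curve, $\alpha\cap P_f=\emptyset$, and $\gamma$ is a core arc for $\alpha$. The component $\alpha'$ of $f^{-1}(\alpha)$ we care about is the boundary of the thin neighborhood of the arc $\gamma_0$ in $f^{-1}(\gamma)$ that terminates at $c$; this $\gamma_0$ joins one point $w\in f^{-1}(c)$ to the point $c$.

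The main point is then the counting. On one side of $\alpha'$ lie exactly the points of $f^{-1}(c)\cap P_f$ that are "enclosed" together with $c$; more precisely, a thin neighborhood of $\gamma_0$ separates $S^2$ into two disks, one containing $\{w,c\}$ and no other postcritical points on the arc, and the postcritical points are distributed so that one side contains at most the $k$ points of $f^{-1}(c)\cap P_f$ plus possibly $c$'s image-related points — here I would argue that one complementary disk contains exactly the postcritical points among $f^{-1}(c)\cup\{c\}$ that happen to fall in it, which is between $1$ and $k$ of them, while the other disk contains the remaining $m-(\text{that number})\ge m-k$ postcritical points. Since $m-k\ge 3\ge 2$ and the small-neighborhood disk contains $c$ together with $w$ hence at least $1$ postcritical point, and in fact we can arrange the neighborhood so that the "inside" disk contains at least $c$ and the "outside" at least $m-k\ge 3$, we need also that the inside has $\ge 2$; this is where a little care is needed — if the inside disk contained only $c$ then $\alpha'$ would be peripheral. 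The hard part will be ensuring that $\alpha'$ is genuinely non-peripheral, i.e.\ that the thin-neighborhood disk around $\gamma_0$ contains at least two postcritical points. I expect to handle this exactly as in Proposition \ref{pro:B1}: one argues that the full preimage arc $f^{-1}(\gamma)$ is connected at $c$ (several arcs $\gamma_0,\gamma_1,\dots$ share endpoints along the fiber chain), so the relevant component of $f^{-1}(\alpha)$ actually bounds a disk containing several of the $\le k$ points of $f^{-1}(c)\cap P_f$ together with any postcritical preimages of $f(c)$; choosing $\gamma$ and the enclosure appropriately, both complementary components end up with at least two postcritical points, using $m-k\ge 3$ to guarantee the outside has $\ge 3\ge 2$ and the structure of the fiber to guarantee the inside has $\ge 2$. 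Once both sides have $\ge 2$ postcritical points, $\alpha'$ is essential and non-peripheral, so by the criterion $\Sigma_f$ cannot be constant, completing the proof.
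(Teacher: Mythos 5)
Your overall strategy (pull back the boundary of a regular neighborhood of a well-chosen arc and count postcritical points on each side) is the right one, but your choice of arc creates a gap that you acknowledge yourself and never close, and it is precisely the point where the paper's proof does something different. You take $\gamma$ from $c$ to $f(c)$; the paper takes $\gamma$ from $\infty$ to $c$. Because $f$ is a topological polynomial, $\infty$ is a totally ramified fixed critical point, so $f^{-1}(\gamma)$ is a star: $n$ arcs meeting only at $\infty$, each running from $\infty$ to one of the $n$ distinct points of $f^{-1}(c)$ (distinct because the fiber over $c$ contains no critical points). A regular neighborhood of this star is a single disk, so $f^{-1}(\alpha)$ is one simple closed curve, and the disk side contains exactly $\{\infty\}\cup(f^{-1}(c)\cap P_f)$, i.e.\ $k+1\geq 2$ postcritical points (note $k>0$ since $c\in P_f$ forces some preimage of $c$ to be postcritical, as $f^{-1}(c)$ has no critical points). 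The other side then has $m-(k+1)\geq 2$ by the hypothesis $m-k\geq 3$. Both bounds are automatic; there is nothing left to arrange.

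With your arc from $c$ to $f(c)$, neither of these features is available. First, the claim that $f^{-1}(\gamma)$ is a \emph{disjoint} union of $n$ arcs each mapped homeomorphically is false in general: the hypothesis controls critical points only in $f^{-1}(c)$, not in $f^{-1}(f(c))$, and several preimage arcs can share an endpoint at a critical preimage of $f(c)$ (indeed $c$ itself may be critical). Second, and more seriously, if the component $\gamma_0$ you select is an arc from some $w\in f^{-1}(c)$ with $w\notin P_f$ to $c$, then the thin disk around $\gamma_0$ contains only the single postcritical point $c$, and $\alpha'$ is peripheral; your proposal explicitly flags this as ``the hard part'' and defers to the argument of the quadratic proposition, but that argument works only because its arc is anchored at $\infty$, which is exactly the step you omitted. (A minor additional slip: a topological polynomial can have finite fixed points, so $c=f(c)$ is not excluded by your reasoning and your arc could degenerate.) Replacing your arc by the paper's arc from $\infty$ to $c$ repairs all of this at once.
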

 \begin{proof}
Since $c\in P_{f}\setminus\{\infty\}$ and $f^{-1}({c})$ contains no critical points, then $k>0$. Let $\gamma$ be an arc joining the points $\infty$ and $c$  such that $\gamma\cap( P_{f}\setminus\{c,\infty\})=\emptyset$. Then $f^{-1}(\gamma)$ is the union of $n$ arcs $\gamma_{1},\gamma_{2},\cdots,\gamma_{n} $ so that $\gamma_{1}\cap\gamma_{2}\cdots\cap\gamma_{n}=\{\infty\}$, and each $\gamma_{i}$ joins $\infty$ and some preimage of $c$. Now let $\alpha$ be the boundary of a small regular neighborhood of the curve $\gamma$. By continuity, we may take $\alpha$ to be a simple closed curve so that $\gamma$ is a core arc for $\alpha$ and $\alpha\cap( P_{f}\setminus\{c,\infty\})=\emptyset$. Then one connected component of $S^{2}\setminus f^{-1}(\alpha)$  contains the set $\{\infty\}\cup(f^{-1}({c})\cap P_{f})$; so this connected component of  $S^{2}\setminus f^{-1}(\alpha)$ contains exactly $k+1$ postcritical points. Thus the other connected component contains $m-(k+1)$ postcritical points. Since $m-(k+1)\geq2$ we conclude that $\Sigma_{f}$ cannot be constant.
\end{proof}

\maketitle\appendix
\section{Group Theory}
\begin{prop}\label{prop:a1} Let $G$ be a finite cyclic group of order $n$ and let $h$ be an element of order $m$ in $G$. Then there exists $g\in G$ such that $\langle g\rangle=G$ and $g^{n/m}=h$.
\end{prop}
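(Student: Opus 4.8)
The plan is to pass to the concrete model $G=\mathbb{Z}/n\mathbb{Z}$ (written additively) and turn the statement into a congruence problem. First I would observe that $m\mid n$ by Lagrange, so $n/m$ is an integer, and that in $\mathbb{Z}/n\mathbb{Z}$ the elements of order $m$ are exactly those of the form $(n/m)u+n\mathbb{Z}$ with $\gcd(u,m)=1$; fix such a $u$ with $h=(n/m)u+n\mathbb{Z}$. A generator of $G$ is an element $t+n\mathbb{Z}$ with $\gcd(t,n)=1$, and for such a $t$ the requirement $(n/m)\bigl(t+n\mathbb{Z}\bigr)=h$ reads $(n/m)t\equiv (n/m)u\pmod n$, i.e. $(n/m)(t-u)\equiv 0\pmod n$, i.e. $t\equiv u\pmod m$. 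Thus the entire statement reduces to: there exists an integer $t$ with $\gcd(t,n)=1$ and $t\equiv u\pmod m$.

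Next I would produce such a $t$ by the Chinese Remainder Theorem. Factor $n=\prod_i p_i^{a_i}$. For each prime $p_i$ dividing $m$, let $p_i^{b_i}$ be the exact power of $p_i$ dividing $m$, so $b_i\le a_i$; since $\gcd(u,m)=1$ we have $p_i\nmid u$, and I impose $t\equiv u\pmod{p_i^{b_i}}$, which already forces $p_i\nmid t$. For each prime $p_i$ dividing $n$ but not $m$, I impose $t\equiv 1\pmod{p_i^{a_i}}$, again forcing $p_i\nmid t$. These congruences are on pairwise coprime moduli, so CRT gives a simultaneous solution $t$; by construction $p_i\nmid t$ for every prime divisor $p_i$ of $n$, hence $\gcd(t,n)=1$, while the congruences modulo the prime powers $p_i^{b_i}$ dividing $m$ reassemble (again by CRT, applied to $m=\prod_{p_i\mid m}p_i^{b_i}$) to give $t\equiv u\pmod m$. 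Then $g:=t+n\mathbb{Z}$ has $\langle g\rangle=G$ and $g^{n/m}=(n/m)t+n\mathbb{Z}=h$, which is what we want.

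I do not expect any real obstacle here beyond bookkeeping. The only point deserving a moment's care is checking that imposing $t\equiv u$ only on the prime powers dividing $m$ genuinely pins down $t\bmod m$, and that this can be done simultaneously with keeping $t$ a unit modulo the remaining prime-power factors of $n$ — both of which are immediate from CRT. Equivalently, one may phrase the whole reduction as the surjectivity of the natural map $(\mathbb{Z}/n\mathbb{Z})^{\times}\to(\mathbb{Z}/m\mathbb{Z})^{\times}$ when $m\mid n$, and either cite this standard fact or prove it by the same CRT argument.
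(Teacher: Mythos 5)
Your proof is correct, and it is essentially the paper's argument in additive notation: both reduce the problem to lifting a unit $u$ modulo $m$ to a unit modulo $n$ (i.e.\ the surjectivity of $(\mathbb{Z}/n\mathbb{Z})^{\times}\to(\mathbb{Z}/m\mathbb{Z})^{\times}$) and settle it by the Chinese Remainder Theorem. The paper organizes the CRT step as a two-case split using the product $q$ of primes dividing $n/m$ but not $m$, whereas you impose congruences prime power by prime power, but the underlying idea is the same.
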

\begin{proof}
Choose $a\in G$ such that $\langle a\rangle=G$. Then $\langle a^{n/m}\rangle=\langle h\rangle$, so there exists $r\in\n$ such that $a^{nr/m}=h$; of course $\gcd(r,m)=1$. Also if $d\in\n$,
then $a^{nd/m}=h$ if and only if $d\equiv r\mod m$. Since $n=m(n/m)$, analyze the following two cases.

\text{First Case.} Every prime divisor of $n/m$ is a divisor of $m$. Then $\gcd(r,n)=1$. Otherwise, there exists a prime number $p$ such that $p|r$ and $p|m(n/m)$. Hence $p|r$ and $p|m$, which is a contradiction because $\gcd(r,m)=1$. Therefore, $\gcd(r,n)=1$ and so $a^{r}$ generates $G$. Also $(a^{r})^{n/m}=h$, as required.

\text{Second Case.} Not every prime divisor of $n/m$ is a divisor of $m$. Let $q$ be the product of the primes which divide $n/m$ but do not divide $m$. Thus, $\gcd(m,q)=1$ and $\gcd(s,n)=1$ if and only if $\gcd(s,m)=\gcd(s,q)=1$. By the Chinese remainder theorem, we may choose $\tau\in\n$ such that $\tau\equiv r \mod m$ and $\tau\equiv 1\mod q$. 
Then $\gcd(\tau,n)=1$, so $a^{\tau}$ generates $G$. Also $(a^{\tau})^{n/m}=h$, as required.
\end{proof}

\begin{prop}\label{prop:a2} Every element of $\mathbb{Z}/n\mathbb{Z}\oplus\mathbb{Z}/n\mathbb{Z}$ is a multiple of a basis element.
\end{prop}
\begin{proof}
In fact, let $g=(\ov{x},\ov{y})\in\mathbb{Z}/n\mathbb{Z}\oplus\mathbb{Z}/n\mathbb{Z}$. If $\ov{x}=\ov{0}$ or $\ov{y}=\ov{0}$, the proposition follows. Assume $\ov{x}\neq\ov{0}$ and $\ov{y}\neq\ov{0}$, then $g=d(\ov{x/d},\ov{y/d})$ where $d=\gcd(x,y)$. Since $\gcd(x/d,y/d)=1$, $(\ov{x/d},\ov{y/d})$ is a basis element and the proposition follows. 
\end{proof}

\begin{prop}\label{prop:a3} Assume that $A=\mathbb{Z}/n\mathbb{Z}\oplus\mathbb{Z}/n\mathbb{Z}$. Let $A'$ be a subgroup of $A$, and let $B'$ be a cyclic subgroup of $A'$ so that $A'/B'$ is also cyclic.  Then there exists a cyclic subgroup $B$ of $A$ such that $A/B$ is cyclic and $A'\cap B = B'$.
\end{prop}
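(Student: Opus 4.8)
The plan is to reduce to the prime‑power case, fix a basis of $A$ adapted to $A'$ by a Smith‑normal‑form argument (compare Proposition \ref{prop:a2}), and then write down a generator of $B$ by hand; the only computational part is checking that the explicit generator does what is claimed.

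First I would reduce to $n=p^{a}$. Decomposing into primary components gives $A=\bigoplus_{p}A_{p}$ with $A_{p}\cong(\mathbb{Z}/p^{a_{p}}\mathbb{Z})^{2}$, and likewise $A'=\bigoplus_{p}A'_{p}$, $B'=\bigoplus_{p}B'_{p}$. Since a finite abelian group is cyclic exactly when each of its Sylow subgroups is, the hypotheses hold primary component by primary component, and if for each $p$ one finds $B_{p}\le A_{p}$ cyclic with $A_{p}/B_{p}$ cyclic and $A'_{p}\cap B_{p}=B'_{p}$, then $B:=\bigoplus_{p}B_{p}$ is cyclic of order $n$, $A/B$ is cyclic, and $A'\cap B=B'$. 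I would also record the basic dictionary in $A=(\mathbb{Z}/p^{a}\mathbb{Z})^{2}$: a subgroup $B$ with $B$ and $A/B$ both cyclic is precisely one generated by a \emph{primitive} vector $g$ (a vector not in $pA$, equivalently one belonging to a basis of $A$), because such a $B$ must have order $p^{a}$; and for any primitive $g$ the quotient $A/\langle g\rangle$ is automatically cyclic (complete $g$ to a basis). So the task is reduced to: given $A'\le A=(\mathbb{Z}/p^{a}\mathbb{Z})^{2}$ and $B'\le A'$ cyclic with $A'/B'$ cyclic, produce a primitive $g\in A$ with $\langle g\rangle\cap A'=B'$.

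Next I would choose a basis $b_{1},b_{2}$ of $A$ with $A'=\langle p^{i}b_{1}\rangle\oplus\langle p^{j}b_{2}\rangle$ for integers $0\le i\le j\le a$ (Smith normal form for the inclusion $A'\hookrightarrow A$), and write a generator of $B'$ as $w=\alpha\,p^{i}b_{1}+\beta\,p^{j}b_{2}$. The structural point is that ``$A'/B'$ cyclic'' forces $w\notin pA'$ whenever $A'$ is noncyclic (that is, $j<a$): if $w\in pA'$ then the image of $B'$ in $A'/pA'\cong\mathbb{F}_{p}^{\,2}$ is trivial, so $A'/B'$ surjects onto $A'/pA'$ and cannot be cyclic. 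Hence at least one of $\alpha,\beta$ is a unit, and after multiplying $w$ by a unit we may assume $w=p^{i}b_{1}+\beta\,p^{j}b_{2}$ or $w=\alpha\,p^{i}b_{1}+p^{j}b_{2}$; when $j=a$ the group $A'=\langle p^{i}b_{1}\rangle$ is cyclic and $B'=\langle p^{c}b_{1}\rangle$ for some $i\le c\le a$.

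Finally, in each normal form I would exhibit $g=c_{1}b_{1}+c_{2}b_{2}$ with one coordinate a unit — so $g$ is primitive — and with the exponents arranged so that the least $r$ with $p^{r}g\in A'$, which is read off from the two divisibility conditions that membership in $\langle p^{i}b_{1}\rangle\oplus\langle p^{j}b_{2}\rangle$ imposes on the coefficient, satisfies $\langle p^{r}g\rangle=\langle w\rangle=B'$. Concretely: $g=b_{1}+\beta\,p^{\,j-i}b_{2}$ in the first form; writing $a_{1}$ for the $p$‑adic valuation of $\alpha\,p^{i}$, in the second form $g=(\text{unit})\,b_{1}+p^{\,j-a_{1}}b_{2}$ when $a_{1}<j$ and $g=p^{\,a_{1}-j}(\text{unit})\,b_{1}+b_{2}$ when $a_{1}\ge j$; and $g=b_{1}+p^{\,a-c}b_{2}$ in the cyclic case. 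Putting $B=\langle g\rangle$ settles the prime‑power case, and reassembling over all $p$ proves the proposition. The hard part is exactly this last step: one must split the noncyclic case according to whether the valuation of the non‑unit coordinate of $w$ lies below or above $j-i$, verify that every exponent occurring in $g$ is nonnegative, and check primitivity of $g$ in the degenerate subcases ($\alpha=0$, or $c=a$) — the $p$‑adic valuation bookkeeping here is where a slip is most likely to hide.
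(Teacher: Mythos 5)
Your argument is correct, and the details I checked (the dictionary between cyclic-with-cyclic-quotient subgroups of $(\mathbb{Z}/p^{a}\mathbb{Z})^{2}$ and primitive vectors, the observation that $A'/B'$ cyclic forces $w\notin pA'$ when $A'$ is noncyclic, and the valuation computations showing $\langle g\rangle\cap A'=\langle p^{r}g\rangle=B'$ in each normal form) all go through. However, the route is genuinely different from the paper's. The paper does not work prime by prime: it splits $A'$ into just two blocks, $A'_{1}$ generated by the \emph{cyclic} Sylow subgroups of $A'$ and $A'_{2}$ generated by the \emph{noncyclic} ones, and decomposes $A=A_{1}+A_{2}$ compatibly. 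On the cyclic block it performs essentially your ``tilting'' trick (a basis element $b_{1}$ mapping to $(1,k)$ in suitable coordinates, the analogue of your $g=b_{1}+p^{a-c}b_{2}$), but on the noncyclic block it avoids all Smith-normal-form and valuation bookkeeping by a structural observation: since $A'_{2}$ is a sum of noncyclic $p$-groups and $A'_{2}/B'_{2}$ is cyclic, \emph{no cyclic subgroup of $A'_{2}$ properly contains $B'_{2}$}; hence once $b_{2}$ is chosen (via Proposition \ref{prop:a2}) to be a basis element of $A_{2}$ with a multiple equal to $b'_{2}$, the containment $\langle b_{2}\rangle\cap A'_{2}\supseteq B'_{2}$ is automatically an equality. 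The final generator is $b_{1}+b_{2}$, just as your $B=\bigoplus_{p}B_{p}$ reassembles to a cyclic group of coprime-order pieces. What your approach buys is explicitness and uniformity — one computation covering every case, with concrete generators — at the cost of the case split on $v_{p}(\alpha p^{i})$ versus $j$ that you rightly flag as the error-prone part; what the paper's approach buys is that the hardest case (noncyclic primary components) is dispatched by a maximality argument with no coordinates at all.
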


\begin{proof} Since $A'$ is a finite Abelian group generated by two elements, $A'$ is the internal direct sum of its Sylow subgroups. Then $A'=A'_{1}+ A'_{2}$ where $A'_{1}$ is the subgroup of $A'$ generated by its cyclic Sylow subgroups and $A'_{2}$ is the subgroup of $A'$ generated by its noncyclic Sylow subgroups. It is clear that $A'_{1}$ is cyclic. Let $|A'_{1}|=\alpha$ and $|A'_{2}|=\beta$. Then $\gcd(\alpha,\beta)=1$ and $\alpha\beta$ divides $n^2=p_{1}^{2s_{1}}\cdots p_{r}^{2s_{r}}$. Let $p_{i_{1}},\cdots,p_{i_{k}}$ be the distinct primes that divide $\alpha$. Now set
$$n_{1}=p_{i_{1}}^{s_{i_{1}}}\cdots p_{i_{k}}^{s_{i_{k}}}\ \ \ \text{and}\ \ \ n_{2}=n/n_{1}=p_{j_{1}}^{s_{j_{1}}}\cdots p_{j_{l}}^{s_{j_{l}}}.$$ 
It is not difficult to see that $\{i_{1},\ldots,i_{n}\}\coprod\{j_{1},\ldots j_{l}\}=\{1,\ldots,r\}$, $\alpha$ divides $n_{1}^2$, $\beta$ divides $n_{2}^2$ and  $\gcd(n_{1},n_{2})=1$. Since $n=n_{1}n_{2}$ and $\gcd(n_{1},n_{2})=1$,  then the group $A$ is isomorphic to $\mathbb{Z}/n_{1}\mathbb{Z}\oplus\mathbb{Z}/n_{1}\mathbb{Z}\oplus\mathbb{Z}/n_{2}\mathbb{Z}\oplus\mathbb{Z}/n_{2}\mathbb{Z}$. So there exist subgroups $A_{1}$ and $A_{2}$ of $A$ such that $A=A_{1}+ A_{2}$, where $A_{1}\cong\mathbb{Z}/n_{1}\mathbb{Z}\oplus\mathbb{Z}/n_{1}\mathbb{Z}$ and $A_{2}\cong\mathbb{Z}/n_{2}\mathbb{Z}\oplus\mathbb{Z}/n_{2}\mathbb{Z}$. Thus, $A=A_{1}+A_{2}$ with $|A_{1}|=n_{1}^2$ and $|A_{2}|=n_{2}^2$.

Let $B'$ be a cyclic subgroup of $A'=A'_{1}+ A'_{2}$ and let $g\in B'$ be a generator of $B'$. Then $g$ can be written uniquely as $g=b'_{1}+b'_{2}$ where $b'_{i}\in A'_{i}$. Since $o(b'_{i})$ divides $|A'_{i}|$ and $\gcd(|A'_{1}|,|A'_{2}|)=1$, we have that $\gcd(o(b'_{1}),o(b'_{2}))=1$. Then $o(g)=o(b'_{1})\cdot o(b'_{2})$ and so $B'=\left<b'_{1}\right>+\left<b'_{2}\right>$. Hence $B'$ is the internal direct sum of $B'_{1}=\left<b'_{1}\right>$ and $B'_{2}=\left<b'_{2}\right>$. It is clear that $B'_{i}$ is a subgroup of $A'_{i}$ for $i\in\{1,2\}$. On the other hand, $A'_{i}\subset A_{i}$ for $i\in\{1,2\}$. In fact, let $x\in A'_{1}$, then $x$ can be written uniquely as $x=a_{1}+a_{2}$ where $a_{i}\in A_{i}$. Then $0=\alpha x=\alpha a_{1}+\alpha a_{2},$ where $|A'_{1}|=\alpha$. Thus,  $-\alpha a_{1}=\alpha a_{2}\in A_{1}\cap A_{2}$ and so $\alpha a_{2}=0$. If $a_{2}\neq0$, then $o(a_{2})$ divides $\alpha$ which is impossible because $\gcd(\alpha,|A_{2}|)=1$. Therefore, $a_{2}=0$ and so $x\in A_{1}$. For the case $i=2$, proceed similarly. 

By Proposition \ref{prop:a2} , every element of $A_1$ is a multiple of a basis element. Let $v\in A_{1} $ be a basis element such that $\left< v \right>$ contains $A'_{1}$. Choose $w\in A_{1}$ so that $\{v,w\}$ is a basis for $A_{1}$. Let $\varphi:A_{1}\to\mathbb{Z}/n_{1}\mathbb{Z}\oplus\mathbb{Z}/n_{1}\mathbb{Z}$ be the isomorphism defined on these generators by $\varphi(v)=(1,0)$ and $\varphi(w)=(0,1)$. Let $k=o(\varphi(b'_{1}))$. Since $\varphi(b'_{1})\in\left<(1,0)\right>$, by proposition \ref{prop:a1}, there exists $g\in\mathbb{Z}/n_{1}\mathbb{Z}$ such that $\langle g\rangle=\mathbb{Z}/n_{1}\mathbb{Z}$ and $\varphi(b'_{1})=(n_{1}/k)(g,0)$. Let $T$ be the automorphism of $\mathbb{Z}/n_{1}\mathbb{Z}\oplus\mathbb{Z}/n_{1}\mathbb{Z}$ defined on generators by $T(g,0)=(1,0)$ and $T(0,1)=(0,1)$. Now consider the isomorphism $f=T\circ\varphi$. Then $A'_{1}$ is isomorphic to $f(A'_{1})$ which is a subgroup contained in the subgroup $\left<(1,0)\right>$. Let  $b_1\in A_{1}$ so that $f(b_{1})=(1,k)$. Using these coordinates, we have $\left<f(b_{1})\right>\cap f(A'_{1})=\left<(n_{1}/k,0)\right>$.
Therefore, $\left<b_{1}\right>\cap A'_{1}=f^{-1}\left<(n_{1}/k,0)\right>=\left<b'_{1}\right>$.

The group $A'_2/B'_2$ is cyclic. In fact, let $\psi:A'_{2}\to A'/B'$ be the canonical projection defined by $x\mapsto x+B'$. Since $A'=A'_{1}+A'_{2}$ and $B'=B'_{1}+B'_{2}$ (both internal direct sums), we conclude that $\text{Ker}(\psi)=B'_{2}$. Thus $A'_2/B'_2$ is isomorphic to $\psi(A'_{2})$. By assumption $A'/B'$ is cyclic, therefore $\psi(A'_{2})$ also is. Since $A'_2/B'_2$ is cyclic, $B'_{2}$ is cyclic and $A'_{2}$ is the internal direct sum of the noncyclic $p$-subgroups of $A'$, then no cyclic subgroup of $A'_2$ properly contains $B'_2$. By Proposition \ref{prop:a2}, every element of $A_2$ is a multiple of a basis element; then we can take $b_2$ to be a basis element of $A_2$ so that some multiple of $b_2$ is $b'_2$.  Since $\left<b_{2}\right>\cap A'_{2}$ is a cyclic subgroup of $A'_{2}$ that contains $B'_{2}=\left<b'_{2}\right>$, we have that $\left<b_{2}\right>\cap A'_{2}=\left<b'_{2}\right>$. 

In the previous setting, let $B:=\left<b_{1}+b_{2}\right>$. Since $o(b_{1})$ and $o(b_{2})$ are coprime then $B=\left<b_{1}\right>+\left<b_{2}\right>$. Thus, 
$A'\cap B=(A'_{1}+A'_{2})\cap(\left<b_{1}\right>+\left<b_{2}\right>)=\left<b'_{1}\right>+\left<b'_{2}\right>=B'$. Now, note that

$$\Frac{A}{B}=\Frac{A_{1}+A_{2}}{\left<b_{1}\right>+\left<b_{2}\right>}\cong\Frac{A_{1}}{\left<b_{1}\right>}\oplus\Frac{A_{2}}{\left<b_{2}\right>}\cong\Frac{\mathbb{Z}/n_{1}\mathbb{Z}\oplus\mathbb{Z}/n_{1}\mathbb{Z}}{\left<(1,n_{1}/m)\right>}\oplus\Frac{A_{2}}{\left<b_{2}\right>}$$
Since $\gcd(n_{1},n_{2})=1$, and $(1,n_{1}/m)$ and $b_{2}$ are basis elements of $\mathbb{Z}/n_{1}\mathbb{Z}\oplus\mathbb{Z}/n_{1}\mathbb{Z}$ and  $A_{2}$ respectively, we finally conclude that $A/B$ is cyclic.
\end{proof}

\begin{prop}\label{prop:a01} Let $A$ be a finite Abelian group generated by two elements. Let $A'$ be a subgroup of $A$, and let $B'$ be a cyclic subgroup of $A'$ so that $A'/B'$ is also cyclic.  Then there exists a cyclic subgroup $B$ of $A$ such that $A/B$ is cyclic and $A'\cap B = B'$.\end{prop}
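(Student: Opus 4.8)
The plan is to bootstrap Proposition \ref{prop:a3}, which is exactly the present statement in the special case $A\cong\mathbb{Z}/n\mathbb{Z}\oplus\mathbb{Z}/n\mathbb{Z}$, by embedding the general two-generator group $A$ into such a group. First I would invoke the classification of finite Abelian groups generated by two elements to write $A\cong\mathbb{Z}/d_{1}\mathbb{Z}\oplus\mathbb{Z}/d_{2}\mathbb{Z}$ with $d_{1}\mid d_{2}$, and set $n:=d_{2}$, the exponent of $A$. Since multiplication by $n/d_{1}$ embeds $\mathbb{Z}/d_{1}\mathbb{Z}$ into $\mathbb{Z}/n\mathbb{Z}$, there is an injective homomorphism $A\hookrightarrow\widehat{A}:=\mathbb{Z}/n\mathbb{Z}\oplus\mathbb{Z}/n\mathbb{Z}$; identifying $A$ with its image we obtain the chain $B'\le A'\le A\le\widehat{A}$.

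Next I would apply Proposition \ref{prop:a3} with ambient group $\widehat{A}$, subgroup $A'$ (which is also a subgroup of $\widehat{A}$), and cyclic subgroup $B'\le A'$ with $A'/B'$ cyclic. This produces a cyclic subgroup $\widehat{B}\le\widehat{A}$ such that $\widehat{A}/\widehat{B}$ is cyclic and $A'\cap\widehat{B}=B'$. Finally I would set $B:=A\cap\widehat{B}$. Then $B$ is cyclic, being a subgroup of the cyclic group $\widehat{B}$; moreover $A'\cap B=A'\cap A\cap\widehat{B}=A'\cap\widehat{B}=B'$ because $A'\le A$; and by the second isomorphism theorem $A/B=A/(A\cap\widehat{B})\cong(A+\widehat{B})/\widehat{B}$, which is a subgroup of the cyclic group $\widehat{A}/\widehat{B}$ and hence cyclic. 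This gives the desired $B$.

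The genuine content is entirely in Proposition \ref{prop:a3}; the reduction above uses only the structure theorem for two-generator finite Abelian groups and the standard isomorphism theorems, so I do not expect a real obstacle. The single point that requires care is that the map $A\hookrightarrow\widehat{A}$ must be genuinely injective, which is precisely why $n$ is taken to be the exponent $d_{2}$ rather than, say, $|A|$.
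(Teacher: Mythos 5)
Your proposal is correct and is essentially identical to the paper's own proof: the paper likewise embeds $A\cong\mathbb{Z}/a\mathbb{Z}\oplus\mathbb{Z}/n\mathbb{Z}$ (with $a\mid n$) into $\mathbb{Z}/n\mathbb{Z}\oplus\mathbb{Z}/n\mathbb{Z}$ via multiplication by $n/a$ on the first factor, applies Proposition \ref{prop:a3} to get a suitable $B$ there, and then intersects with $A$, verifying that $A/(A\cap B)$ is cyclic by realizing it as a subgroup of the cyclic quotient (via the kernel of the canonical projection, which is the same computation as your appeal to the second isomorphism theorem). No discrepancies to report.
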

\begin{proof} Since $A$ is a finite Abelian group generated by two elements, there are positive integers $a$ and $n$ such that $a|n$ and $A\cong\mathbb{Z}/a\mathbb{Z}\oplus\mathbb{Z}/n\mathbb{Z}$. Assuming that  $A=\mathbb{Z}/a\mathbb{Z}\oplus\mathbb{Z}/n\mathbb{Z}$, there is a canonical monomorphism $i_{c}:A\to\mathbb{Z}/n\mathbb{Z}\oplus\mathbb{Z}/n\mathbb{Z}$,  defined on generators by 
$i_{c}(1,0)=(n/a,0)$ and $i_{c}(0,1)=(0,1).$ Thus, the group $A$ is isomorphic to the subgroup $\langle n/a\rangle\oplus\mathbb{Z}/n\mathbb{Z}$, where $\langle n/a\rangle$ is the cyclic subgroup of $\mathbb{Z}/n\mathbb{Z}$ generated by $n/a$.
Without loss of generality we may assume that $A$ is a subgroup of $\mathbb{Z}/n\mathbb{Z}\oplus\mathbb{Z}/n\mathbb{Z}$. Let $A'$ be a subgroup of $A$, and let $B'$ be a cyclic subgroup of $A'$ so that $A'/B'$ is also cyclic. Clearly $B'\subset A'\subset A\subset \mathbb{Z}/n\mathbb{Z}\oplus\mathbb{Z}/n\mathbb{Z}$. By Proposition \ref{prop:a3}, there exists $B$ a subgroup of  $\mathbb{Z}/n\mathbb{Z}\oplus\mathbb{Z}/n\mathbb{Z}$ such that $B$ and $(\mathbb{Z}/n\mathbb{Z}\oplus\mathbb{Z}/n\mathbb{Z})/B$ are cyclic, and $A'\cap B=B'$. Now, set $\tilde{B}=A\cap B$. Obviously $\tilde{B}$ is a cyclic subgroup of $A$ and $A'\cap \tilde{B}=A'\cap(A\cap B)=(A'\cap A)\cap B=A'\cap B=B'$.
Finally, using the canonical projection $\phi:A\to(\mathbb{Z}/n\mathbb{Z}\oplus\mathbb{Z}/n\mathbb{Z})/B$ defined by $x\mapsto x+B$, one sees that $\text{Ker}(\phi)=A\cap B=\tilde{B}$ and so $A/\tilde{B}$ is isomorphic to $\phi(A)$. Since $(\mathbb{Z}/n\mathbb{Z}\oplus\mathbb{Z}/n\mathbb{Z})/B$ is cyclic, $\phi(A)$ also is and therefore $A/\tilde{B}$ is cyclic.\end{proof}\

\begin{center}
List of Symbols.
\end{center}
$\mathbb{N}=\{0,1,2,\cdots\}$.\\
$\mathbb{Z}$ is the set of integer numbers.\\
$\Z^{+}$ is the set of positive integers.\\
$\mathbb{R}$ is set of real numbers.\\
$\mathbb{C}$ is the complex plane.\\
$\P^{1}$ is the Riemann Sphere.\\
$\Z^{2}$ is the 2-dimensional integer lattice.\\ 
$\Omega_{f}$ is critical set of the map $f$.\\
$V_{f}$ is critical value set of the map $f$.\\
$P_{f}$ is postcritical set of the Thurston map $f$.\\
$\Sigma_{f}$ is the pullback map induced by the Thurston map $f$.\\
$\sharp D$ is the number of elements in the set $D$.\\
$A=A_{1}+A_{2}$ internal direct sum of subgroups of $A$.\\
$|A|$ is the order of the group $A$.\\
$o(g)$ is the order of the element $g$.\\


\begin{thebibliography}{99}
\bibitem{BEKP}
X. Buff, A. Epstein, S. Koch, and K. Pilgrim,\begin{em} On Thurston's pullback map\end{em}. In $Complex$ $dynamics$, pp 561--583. AK Peters, Wellesley (2009). MR 2508269 (2010g:37071)

\bibitem{BM}M. Bonk and D. Meyer, \begin{em}Expanding Thurston maps\end{em}, Preprint (2010). ArXiv:1009.3647v1.

\bibitem{CFPP} J.W. Cannon, W.J. Floyd, W.R. Parry, and K.M. Pilgrim,\begin{em} Nearly Euclidean Thurston maps and finite subdivision rules\end{em}. Conform. Geom. Dyn. \textbf{16} (2012), 209--255(electronic).

\bibitem{DH} A. Douady and J.H. Hubbard,\begin{em} A proof of Thurston's topological characterization of rational functions\end{em}, Acta Math. \textbf{171} (1993), 263--297.

\bibitem{K} 
S. Koch, \begin{em}Teichm\"uller Theory and critically finite endomorphisms\end{em}. Advances in Mathematics, Vol. 248, 2013.

\bibitem{S}
E. A. Saenz Maldonado,\begin{em}On Nearly Euclidean Thurston maps,\end{em} Ph.D. Thesis, Virginia Tech, 2012.
\end{thebibliography}
\end{document}